\theoremstyle{plain}
\newtheorem{theorem}{Theorem}[section]
\newtheorem{lemma}[theorem]{Lemma}
\newtheorem{prop}[theorem]{Proposition}
\newtheorem{cor}[theorem]{Corollary}
\newtheorem{utheorem}{\textrm{\textbf{Theorem}}}
\theoremstyle{definition}
\newtheorem{defn}[theorem]{Definition}
\newtheorem{remark}[theorem]{Remark}
\newtheorem{example}[theorem]{Example}
\newtheorem{question}[theorem]{Question}
\numberwithin{equation}{section}
\numberwithin{algorithm}{section}
\DeclareMathOperator{\Id}{Id}
\DeclareMathOperator{\In}{In}
\newcommand{\R}{\mathbb{R}}
\newcommand{\C}{\mathbb{C}}
\newcommand{\D}{\mathbb{D}}
\newcommand{\F}{\mathbb{F}}
\newcommand{\LL}{\widetilde{\mathbf{L}}}
\newcommand{\bp}[1]{\ensuremath{\mathbb P}_\mu \left( #1 \right)}
\newcommand{\bbp}{\ensuremath{\mathbb P}}
\newcommand{\symm}{\mathbb{S}}
\newcommand{\beps}{\boldsymbol{\epsilon}}
\newcommand{\epsrev}{\overset{\leftarrow}{\epsilon}}
\newcommand{\rev}[1]{\overset{\leftarrow}{#1}}
\newcommand{\floo}[1]{\lfloor #1 \rfloor}
\newcommand{\ceil}[1]{\lceil #1 \rceil}
\newcommand{\half}[1]{#1_{\frac{1}{2}}}
\newcommand{\tangle}[1]{\langle #1 \rangle}
\renewcommand{\geq}{\geqslant}
\renewcommand{\leq}{\leqslant}
\begin{document}

\title[Cholesky-factoring symmetric matrices, Riemannian geometry, and
random matrices]{Cholesky decomposition for symmetric matrices,\\
Riemannian geometry, and random matrices}

\author{Apoorva Khare}
\address[A.~Khare]{Department of Mathematics, Indian Institute of
Science, Bangalore, India; and Analysis \& Probability Research Group,
Bangalore, India}
\email{\tt khare@iisc.ac.in}

\author{Prateek Kumar Vishwakarma}
\address[P.K.~Vishwakarma]{D\'epartement de math\'ematiques et de
statistique, Universit\'e Laval, Qu\'ebec, Canada}
\email{\tt prateek-kumar.vishwakarma.1@ulaval.ca,
prateekv@alum.iisc.ac.in}

\begin{abstract}
For each $n \geq 1$ and sign pattern $\epsilon \in \{ \pm 1 \}^n$, we
introduce a cone of real symmetric matrices $LPM_n(\epsilon)$: those with
leading principal $k \times k$ minors of signs $\epsilon_k$. These cones
are pairwise disjoint and their union $LPM_n$ is an open dense cone in
all symmetric matrices; they subsume positive and negative definite
matrices, and symmetric (P-,) N-, PN-, almost P-, and almost N- matrices.
We show that each $LPM_n$ matrix $A$ admits an uncountable family of
Cholesky-type factorizations -- yielding a unique lower triangular matrix
$L$ with positive diagonals -- with additional attractive properties:
\textit{(i)}~each such factorization is algorithmic; and
\textit{(ii)}~each such Cholesky map $A \mapsto L$ is a smooth
diffeomorphism from $LPM_n(\epsilon)$ onto an open Euclidean ball.

We then show that
\textit{(iii)}~the (diffeomorphic) balls $LPM_n(\epsilon)$ are isometric
Riemannian manifolds as well as isomorphic abelian Lie groups, each
equipped with a translation-invariant Riemannian metric (and hence
Riemannian means/barycentres).
Moreover,
\textit{(iv)}~this abelian metric group structure on each
$LPM_n(\epsilon)$ -- and hence the log-Cholesky metric on Cholesky space
-- yields an isometric isomorphism onto a finite-dimensional Euclidean
space. The complex version of this also holds.

In the latter part, we show that the abelian group $PD_n$ of positive
definite matrices, with its bi-invariant log-Cholesky metric, is
precisely the identity-component of a larger group with an alternate
metric: the open dense cone $LPM_n$. This also holds for Hermitian
matrices over several subfields $\mathbb{F} \subseteq \mathbb{C}$. As a
result,
\textit{(v)}~the groups $LPM_n^{\mathbb{F}}$ and
$LPM_\infty^{\mathbb{F}}$ admit a rich probability theory, and the cones
$LPM_n(\epsilon), TPM_n(\epsilon)$ admit Wishart densities with signed
Bartlett decompositions.
\end{abstract}

\subjclass[2020]{
15A23, 
15B48, 
53C22, 
46C05 (primary); 
22E99, 
47A64, 
60B10, 
60B20, 
60E15 (secondary)} 

\keywords{Cholesky decomposition,
Cholesky factorization,
sign pattern,
LPM matrix,
TPM matrix,
Riemannian metric,
log-Cholesky metric,
Lie group,
Hilbert space,
SSRPM matrix,
Hoffmann-J{\o}rgensen inequality, 
Ottaviani--Skorohod inequality, 
Mogul'skii inequality,
L\'evy--Ottaviani inequality,
L\'evy equivalence,
Lorentz--Gram matrix,
Wishart distribution,
signed Bartlett decomposition,
inverse Wishart density,
Cholesky-normal distribution,
lognormal distribution.
}

\date{\today}

\maketitle

\settocdepth{section}
\tableofcontents

\section{Introduction and main results}

One hundred years since Andr\'e-Louis Cholesky's fundamental
factorization of a positive definite matrix was (posthumously) published
in 1924~\cite{Benoit}, the Cholesky decomposition is ubiquitous in
mathematics, the broader sciences, and applied fields.

The Cholesky factorization has since been extended to generic symmetric
matrices, via the LDU decomposition. Namely, let $LPM_n$ denote the dense
subset of real symmetric $n \times n$ matrices, which have all nonzero
leading principal minors. Then e.g.\ \cite[Corollary~3.5.6]{HJ} says that
every matrix in $LPM_n$ admits a unique decomposition into $LDU$, where
$L$ is unit lower triangular, $D$ is diagonal (and depends on $A$), and
$U$ is unit upper triangular (i.e., $L^T,U$ are unipotent and $D$ is in
the torus inside the Borel subgroup of $GL_n(\R)$).

This extension from the positive definite cone $PD_n$ to $LPM_n$ has
already been extensively studied, and is of much use. Our original goal
in this work was to provide for all $LPM_n$ matrices $A$ a parallel
factorization to LDU, but in which $D$ depends on the leading principal
minors of $A$ only through their signs. This search has proved richly
rewarding: it led us to
(a)~uncover for every matrix in $LPM_n$, an uncountable family of such
``unique Cholesky-type factorizations'', which we provide below.
Each of these factorizations has further attractive features:
(b)~It is algorithmic in nature, akin to the ``usual'' Cholesky
decomposition; but it differs from it and from the LDU decomposition.
(c)~It involves smooth diffeomorphisms from certain matrix sub-cones
$LPM_n(\epsilon)$ (defined below) to the positive orthant, hence to an
open Euclidean ball.
Our results also (d)~simultaneously reveal a Riemannian manifold
structure and two abelian Lie group structures on these open sub-cones:
one individually for each sub-cone and the other for their union $LPM_n$.
(e)~These structures facilitate defining probability densities on each
sub-cone $LPM_n(\epsilon)$ -- and we also introduce a novel density on
the cone $PD_n$ itself -- which enable stochastic modeling and
statistical sampling from these cones.

\subsection{Two motivations}\label{Smotivations}

Before elaborating on these rich findings, we begin with two motivations
for studying these cones $LPM_n(\epsilon)$ and seeking a parallel
factorization to LDU with added properties. Our theoretical motivation
arises from to the parallel theory of total positivity. Recall that a
real matrix is totally positive if the determinant of every square
submatrix (termed a ``minor'') is positive. Such matrices have been
studied for over a century, in numerous subfields of mathematics:
analysis, approximation theory, matrix theory, particle systems,
probability, representation theory, combinatorics, integrable systems,
and Gabor analysis among others. See e.g.\ the monograph~\cite{Karlin}
for more on these matrices and more general kernels.

A 1950s result due to Whitney~\cite{Whitney} and Loewner~\cite{Loewner}
shows how to factorize totally positive matrices as products of
bi-diagonal matrices. This was taken forward by
Berenstein--Fomin--Zelevinsky~\cite{BFZ} (following
Lusztig~\cite{Lusztig}), who showed that this factorization yields a
\textit{diffeomorphism} onto an orthant. We present their result; in it
and beyond, $\Id_n$ denotes the $n \times n$ identity matrix, and
$E_{u,v}$ an elementary matrix with $1$ at $(u,v)$ position and zero
otherwise.

\begin{theorem}[\cite{BFZ}]\label{TBFZ}
Fix positive real tuples ${\bf w} = (w_{jk} : 0 < j \leq k < n)$ and
${\bf w'} = (w'_{jk} : 0 < j \leq k < n)$, and ${\bf d} = (d_1, \dots,
d_n)$. The map sending these $n^2$ positive scalars (in this order) to
\[
A({\bf w}, {\bf w'}, {\bf d}) :=
\prod_{j=1}^{n-1} \prod_{k=n-1}^j (\Id_n + w_{jk} E_{k+1,k}) \cdot
\prod_{j=n-1}^1 \prod_{k=j}^{n-1} (\Id_n + w'_{jk} E_{k,k+1}) \cdot
{\rm diag}(d_1, \dots, d_n)
\]
is a diffeomorphism from $(0,\infty)^{n^2}$ onto $TP_{n \times n}$ (the
$n \times n$ totally positive matrices).
\end{theorem}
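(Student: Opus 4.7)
I would verify in turn that the stated map is (i)~a smooth, well-defined map into $TP_{n \times n}$, (ii)~injective and surjective, and (iii)~has smooth inverse, by constructing that inverse explicitly.

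\emph{Forward direction.} Each elementary factor $\Id_n + w E_{k+1,k}$ (and similarly $\Id_n + w' E_{k,k+1}$) with positive weight is totally nonnegative, since all of its minors lie in $\{0, 1, w\}$, and $\operatorname{diag}(d_1, \dots, d_n)$ has strictly positive diagonal; hence their product is invertible and totally nonnegative. That the product is in fact totally \emph{positive} is cleanest to see via the Lindstr\"om--Gessel--Viennot lemma applied to the planar wiring diagram encoded by this specific word of bidiagonal factors: each minor of $A(\mathbf{w}, \mathbf{w}', \mathbf{d})$ expands as a sum over families of pairwise non-crossing paths with strictly positive edge weights, and the BFZ word is arranged so that every minor admits at least one such family. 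Smoothness of the map itself is automatic, since it is polynomial in the $n^2$ input parameters.

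\emph{Inverse construction.} I would recover the parameters by a Gaussian-elimination-style peeling. First the diagonals: $d_k = \Delta_k(A)/\Delta_{k-1}(A)$, where $\Delta_k$ denotes the leading principal $k \times k$ minor (with $\Delta_0 := 1$); these are positive since $A$ is totally positive. Writing $AD^{-1}$ uniquely as $LU$ with $L$ unit lower-triangular and $U$ unit upper-triangular (the $LDU$ decomposition, available since all leading minors of $A$ are nonzero), each of $L, U$ is itself triangular-totally-positive, and the weights $w_{jk}$ (resp.\ $w'_{jk}$) are recovered as explicit Laurent-monomial ratios of solid (contiguous-row, contiguous-column) minors of $L$ (resp.\ $U$) --- equivalently of $A$. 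All such minors are strictly positive on $TP_{n \times n}$, so the recovered parameters are positive; and each is a rational function of the entries of $A$ with nowhere-vanishing denominator, so the inverse is smooth on the open set $TP_{n \times n}$.

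\emph{Main obstacle.} The delicate step is showing that the inverse just described recovers \emph{every} totally positive matrix exactly once. A priori, different reduced words for the longest element of $S_n \times S_n$ give different parametrizations, and the dimension count $\binom{n}{2} + \binom{n}{2} + n = n^2$ matching $\dim TP_{n \times n}$ is only a necessary condition. Injectivity would reduce to uniqueness of $LDU$ followed by an inductive unique peeling of the outermost bidiagonal factor from each unitriangular TP matrix; surjectivity follows once one verifies the LGV correspondence between the wiring diagram of this particular word and the minor-ratio formulas above, yielding a genuine two-sided inverse and hence a diffeomorphism.
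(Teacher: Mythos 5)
The paper does not prove this theorem — it is imported verbatim from Berenstein--Fomin--Zelevinsky (\cite{BFZ}) as a cited black box and serves purely as motivation for the analogous Cholesky-type diffeomorphisms developed in the paper. So there is no in-paper argument to compare against; I can only assess your sketch on its own terms. Your outline does follow the standard Whitney--Loewner--Lusztig--BFZ strategy: prove the image is totally positive via Lindstr\"om--Gessel--Viennot on the wiring diagram of this particular reduced word, and construct the inverse by a Gaussian-elimination peel — first $d_k = \Delta_k(A)/\Delta_{k-1}(A)$ (which is correct, since $AD^{-1}$ is a unit $LU$ product and hence has all leading minors $1$), then factoring each unitriangular TP factor into bidiagonals via solid-minor (chamber-minor) ratios. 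Both halves are the right moves. The problem, which your own closing paragraph flags honestly, is that the sketch offloads the entire content of the theorem onto two omitted verifications: (a) that the LGV correspondence for \emph{this specific} word forces every minor of $A(\mathbf{w},\mathbf{w}',\mathbf{d})$ to be a nonempty sum of positive path weights (nonnegativity is easy; strict positivity for \emph{all} minors is the theorem), and (b) that the explicit minor-ratio formulas, when substituted back through the bidiagonal product, reproduce $A$ exactly — i.e.\ that the peel is a genuine two-sided inverse. Without carrying these out, what you have is a correct roadmap to the BFZ proof, not a proof.
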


This factorization-diffeomorphism from the 1990s has been involved in
important advances, including canonical bases, cluster algebras, plabic
graphs, and the study of the totally nonnegative Grassmannian. Note also that
if one composes with e.g.\ the map $\Psi : B_{\R^{n^2}}({\bf 0}_{n^2},1)
\mapsto (0,\infty)^{n^2}$, sending ${\bf 0}_{n^2}$ to ${\bf 1}_{n^2}$
(the all-ones vector) and every other point
\begin{equation}\label{Epsi}
x = (x_1, \dots, x_{n^2}) \ \mapsto \ \Psi(x) := \left( \exp ( \tan (
\frac{\pi \| x \|}{2} ) \frac{x_1}{\| x \|} ), \dots, \exp ( \tan (
\frac{\pi \| x \|}{2} ) \frac{x_{n^2}}{\| x \|} ) \right)
\end{equation}
then one obtains a smooth diffeomorphism (e.g.~\cite{Stack}) from $TP_{n
\times n}$ onto an open ball. (As a related fact in the theory of the
totally nonnegative Grassmannian: it too is homeomorphic to a ball, as
shown recently~\cite{GKL}  by Galashin--Karp--Lam.)

A more general variant of total positivity, morally going back to the
works of Descartes and Laguerre, involves the \textit{Strictly Sign
Regular (SSR)} matrices. These are real $m \times n$ matrices all of
whose $k \times k$ minors have the same sign $\epsilon_k$, for each $k$.
Unlike TP matrices, for no sign pattern $\epsilon =
(\epsilon_1, \dots, \epsilon_{\min(m,n)})$ other than $(1,\dots,1)$ and
$(-1,1,\dots,(-1)^{\min(m,n)})$, is a factorization as above known.
But this question also leads back to its counterpart for symmetric
matrices, and suggests a partitioning of all LPM matrices: via the signs
of their principal minors.\medskip

Our second, modern motivation comes from the tremendous activity around
the Cholesky and LDU decompositions -- theoretically, numerically, and in
downstream applications.
There has been significant recent activity in sampling and analyzing
data that lives on hyperbolic manifolds (and not Euclidean spaces). Such
problems have attracted researchers in 
geographic routing \cite{Kleinberg},
image and language processing \cite{DGRS,Kh-IEEE},
social networks \cite{Verbeek},
and finance \cite{KRN},
to list a few areas and papers.
In these applications, one studies the Lobachevskian analogues of
correlation matrices, termed ``Lorentz--Gram matrices'' (see
Section~\ref{Sinertia} for more details). These have negative
eigenvalues/inertia, and so for applications it is of interest to provide
a mathematical framework which will enable probability and statistics
tools over cones of such matrices. We do so in this work.

Additionally, the PD cone is homeomorphic to an $\R$-vector space via
taking logarithms~\cite{VectorSpace}. Our ``algorithmic'' factorization,
alternate to LDU, is crucial in uncovering a ``log-Cholesky'' alternative
below to this homeomorphism as well. This latter has the advantage that
it is a \textit{linear} map that is a Euclidean space isomorphism onto
$PD_n$, and thus onto every $LPM_n(\epsilon)$ cone.

\subsection{LPM matrices and their Cholesky factorization}\label{Sublpm}

Motivated by the SSR matrix cones, we begin this work by ``generalizing''
the cone $PD_n$ of positive definite $n \times n$ matrices, which is of
tremendous importance in theoretical and applied fields. The cone $PD_n$
is defined in several equivalent ways, of which we list three:
\begin{enumerate}
\item All principal minors are positive.
\item All leading principal minors are positive.
\item All trailing principal minors are positive.
\end{enumerate}

Akin to Theorem~\ref{TBFZ}, $PD_n$ admits a (celebrated)
factorization-diffeomorphism onto an orthant:

\begin{theorem}[Cholesky decomposition]\label{Tpd}
Let ${\bf L}_n$ denote the {\em Cholesky space} of lower triangular
matrices in $\R^{n \times n}$ with positive diagonal entries. Then $\Phi
: L \mapsto L L^T$ is a smooth diffeomorphism~\cite{Cholesky} between
$PD_n$ and ${\bf L}_n$, which takes Borel sets to Borel sets (both inside
$\R^{n \times n}$). Thus $PD_n$ is diffeomorphic to an open Euclidean
ball in $\R^{n(n+1)/2}$.
\end{theorem}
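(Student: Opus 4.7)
The plan is to establish, in sequence: (i) $\Phi$ sends $\mathbf{L}_n$ into $PD_n$; (ii) $\Phi$ is a bijection; (iii) both $\Phi$ and $\Phi^{-1}$ are smooth; (iv) $\Phi$ takes Borel sets to Borel sets; and (v) $\mathbf{L}_n$ is diffeomorphic to an open Euclidean ball of dimension $n(n+1)/2$. For (i), if $L \in \mathbf{L}_n$ then $\det L > 0$ since $\det L$ is the product of the positive diagonal entries, so $L$ is invertible and $x^T L L^T x = \| L^T x \|^2 > 0$ for every nonzero $x$, placing $LL^T$ in $PD_n$.

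For (ii) I would prove existence and uniqueness of the Cholesky factor by induction on $n$, following the classical algorithm. Block-decompose $A = \begin{pmatrix} a_{11} & b^T \\ b & A' \end{pmatrix} \in PD_n$; positivity of the top-left entry forces $\ell_{11} := \sqrt{a_{11}} > 0$, and the remaining first-column entries of $L$ are then forced to be $b/\ell_{11}$. The Schur complement $A' - bb^T/a_{11}$ is again positive definite (a standard computation), so the inductive hypothesis yields the rest of $L$ uniquely. For a one-shot uniqueness argument, if $LL^T = MM^T$ with $L,M \in \mathbf{L}_n$, then $M^{-1}L = M^T (L^T)^{-1}$ is simultaneously lower and upper triangular, hence diagonal; the positive-diagonal condition on $L$ and $M$ forces this diagonal matrix to be $\Id_n$.

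For (iii), $\Phi$ is smooth because it is polynomial in the entries of $L$. Smoothness of $\Phi^{-1}$ can be read off the recursive algorithm above: every entry $\ell_{ij}$ is an elementary expression in the entries of $A$ and of the successive Schur complements, involving only square roots and divisions by quantities that remain strictly positive throughout the induction (because all leading principal minors of $A \in PD_n$ are positive, so each Schur-complement diagonal entry is positive). Alternatively, one can verify that the differential $d\Phi_L$, viewed as a linear map between the $n(n+1)/2$-dimensional space of lower triangular matrices and the same-dimensional space of real symmetric matrices, is injective -- hence an isomorphism -- at each $L \in \mathbf{L}_n$; this is a routine check that reduces to invertibility of $L$, after which the inverse function theorem applies.

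For (iv), a smooth diffeomorphism is in particular a homeomorphism, so it carries Borel sets to Borel sets in either direction. Finally, $\mathbf{L}_n$ is the product of $n$ copies of $(0,\infty)$ (the diagonal entries) with $n(n-1)/2$ copies of $\R$ (the strictly lower triangular entries); componentwise $\log$ on the positive factors gives a diffeomorphism with $\R^{n(n+1)/2}$, and composition with a radial contraction such as the map $\Psi$ in~\eqref{Epsi} yields the desired diffeomorphism onto an open Euclidean ball. The main obstacle is the clean proof of smoothness of $\Phi^{-1}$; the recursive formulas make this transparent, but one must carefully track positivity of each Schur-complement diagonal entry through the induction, which in turn is what ties the whole statement to the principal-minor positivity characterization of $PD_n$.
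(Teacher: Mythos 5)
Your proof is correct and follows essentially the same route the paper takes for its generalization (Theorem~\ref{Tlpm}): an inductive block-decomposition using Schur complements to extract the Cholesky factor entry by entry, with smoothness read off the explicit recursive formulas, and the ball claim obtained by composing a componentwise $\log$ on the diagonal with a radial contraction like~\eqref{Epsi}. The only cosmetic deviation is that you peel off the $1\times 1$ block in the top-left corner while the paper's proof of Theorem~\ref{Tlpm} peels it off in the bottom-right; for Theorem~\ref{Tpd} itself the paper simply cites~\cite{Cholesky} and only spells out the final (ball) sentence.
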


The final sentence -- and the ``orthant'' claim -- are because we have
the smooth diffeomorphism $\psi := (\log, \dots, \log; {\rm id}, \dots,
{\rm id}) : (0,\infty)^{n(n+1)/2} \to {\bf L}_n$, and hence the
composition (via~\eqref{Epsi})
\[
B_{\R^{n(n+1)/2}} ({\bf 0}_{n(n+1)/2},1)\overset{\Psi}{\longrightarrow}
(0,\infty)^{n(n+1)/2} \overset{\psi}{\longrightarrow} {\bf L}_n
\overset{\Phi}{\longrightarrow} PD_n.
\]

The cone of SSR matrices (see the paragraph following~\eqref{Epsi}) leads
to
(i)~trying to define ``sign pattern'' variants of real symmetric
matrices, e.g.\ via the above criteria for $PD_n$;
and then (ii)~trying to extend the LDU factorization for such matrices
\cite[Corollary 3.5.6]{HJ} to a family of such factorizations.
Interestingly, it is not description~(1), but~(2) and~(3) that are the
way to proceed here. We begin by extending description~(2) to other sign
patterns:

\begin{defn}[A novel cone]\label{Dlpm}
Given an integer $n \geq 1$ and a sign pattern $\epsilon \in \{ \pm 1
\}^n$, a real symmetric matrix $A_{n \times n}$ is said to be
$LPM_n(\epsilon)$ (\textit{Leading Principal Minors} with sign pattern
$\epsilon$) if for all $1 \leq k \leq n$, the leading principal $k \times
k$ minor of $A$ is nonzero of sign $\epsilon_k$. We say $A_{n \times n}$
is \textit{LPM} if $A$ is $LPM_n(\epsilon)$ for some $\epsilon \in \{ \pm
1 \}^n$:
\begin{equation}
LPM_n = \bigsqcup_{\epsilon \in \{ \pm 1 \}^n} LPM_n(\epsilon), \qquad n
\geq 1.
\end{equation}
\end{defn}

A distinguished family of LPM matrices is the positive definite cone
$PD_n = LPM_n({\bf 1}_n)$.
LPM matrices also include symmetric P-, N-, PN-, almost P-, and almost N-
matrices, which are widely studied in economics and game theory,
mathematical programming, complexity theory, the theory of (global)
univalence of maps, and interval matrices among others. See
Remark~\ref{Rexamples}. On the theoretical side, larger classes
containing various $LPM_n(\epsilon)$ were recently studied
in~\cite{Signs,Descartes} in broader frameworks and with different
objectives.

Note that the set $LPM_n$ is a strict subset of invertible real
symmetric matrices (for instance, it does not contain the anti-diagonal
permutation matrix $P_n$ for any $n \geq 2$ -- see Theorem~\ref{Ttpm}).
Nevertheless, we show below that it is open and dense in all symmetric
matrices. Moreover, the set $LPM_n(\epsilon)$ is nonempty for all
$\epsilon$; for instance,
\begin{equation}\label{Elpmdiag}
\D_\epsilon := \begin{pmatrix}
\epsilon_1 & 0 & 0 & \cdots & 0\\
0 & \epsilon_1 \epsilon_2 & 0 & \cdots & 0\\
0 & 0 & \epsilon_2 \epsilon_3 & \cdots & 0\\
\vdots & \vdots & \vdots & \ddots & \vdots\\
0 & 0 & 0 & \cdots & \epsilon_{n-1} \epsilon_n
\end{pmatrix}
\end{equation}
is the unique diagonal matrix in $LPM_n(\epsilon)$ with unit-modulus
entries. This matrix is often used below, notably when equipping each
$LPM_n(\epsilon)$ with a Riemannian metric (Remark~\ref{Runique}) and
with probability densities (Theorem~\ref{Twishart} and
Proposition~\ref{Pjacobian}).

We now proceed. As Theorem~\ref{Tpd} says, $PD_n$ is homeomorphic, even
diffeomorphic, to an open Euclidean ball via the Cholesky decomposition.
It is thus natural to ask if
\begin{enumerate}
\item[(a)] more Cholesky-type LDU factorizations of positive matrices
exist -- which are moreover smooth diffeomorphisms; and the same question
more generally for every cone $LPM_n(\epsilon)$.
\item[(b)] If yes, how one would go (smoothly) from any of these sets to
any other.
\end{enumerate}

The following result shows that both questions have positive answers.

\begin{utheorem}[Algorithmic and generalized Cholesky decomposition]\label{Tlpm}
Fix an integer $n \geq 1$.
\begin{enumerate}[$(1)$]
\item The sets $LPM_n(\epsilon)$, $\epsilon \in \{ \pm 1 \}^n$ are
nonempty and pairwise disjoint; and their union $LPM_n$ is open and dense
in all real symmetric matrices. More strongly, given any $A = A^T \in
\R^{n \times n}$, there exists a sign pattern $\epsilon$ and a sequence
$A_m \in LPM_n(\epsilon)$ such that $A_m \to A$ entrywise.

\item Given any sign pattern $\epsilon \in \{ \pm 1 \}^n$, and a matrix
$B_\epsilon \in LPM_n(\epsilon)$, the map
\[
\Phi_{B_\epsilon} : {\bf L}_n \to LPM_n(\epsilon) \quad \text{defined
by} \quad L \mapsto L \cdot B_\epsilon \cdot L^T
\]
is a smooth diffeomorphism such that the Cholesky decomposition
$\Phi^{-1}_{B_\epsilon}$ is algorithmic, hence preserves the families of
Borel and Lebesgue sets.
\end{enumerate}
\end{utheorem}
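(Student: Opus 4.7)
For nonemptiness, the leading $k \times k$ minor of the diagonal matrix $\D_\epsilon$ in~\eqref{Elpmdiag} telescopes: $\epsilon_1 \cdot (\epsilon_1\epsilon_2) \cdot (\epsilon_2\epsilon_3) \cdots (\epsilon_{k-1}\epsilon_k) = \epsilon_k$, so $\D_\epsilon \in LPM_n(\epsilon)$. Pairwise disjointness is immediate, and openness follows because each leading principal minor is a polynomial (hence continuous) function of the entries, so $LPM_n$ is a finite intersection of preimages of $\R \setminus \{0\}$. For the strong density assertion I would consider the one-parameter family $A + t\Id_n$: for each $k$, $\Delta_k(A + t\Id_n)$ is a monic polynomial in $t$ of degree $k$ (the characteristic polynomial of the leading block, up to sign), so it has finitely many real roots. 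Hence $A + t\Id_n \in LPM_n$ for all small $t > 0$ outside a finite exceptional set, and since the possible sign patterns form the finite alphabet $\{ \pm 1 \}^n$, pigeonhole produces a sequence $t_m \to 0^+$ along which the sign pattern is constant.

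\textbf{Plan for Part (2): forward direction and reduction.} The map $\Phi_{B_\epsilon}(L) = L B_\epsilon L^T$ is polynomial in the entries of $L$, hence smooth. Because $L$ is lower triangular, the leading $k \times k$ block of $L B_\epsilon L^T$ equals $L_k (B_\epsilon)_k L_k^T$, giving $\Delta_k(\Phi_{B_\epsilon}(L)) = \bigl( \prod_{i=1}^k L_{ii} \bigr)^2 \Delta_k(B_\epsilon)$, which has sign $\epsilon_k$; so $\Phi_{B_\epsilon}$ indeed takes values in $LPM_n(\epsilon)$. I would then reduce the general case to the canonical choice $B_\epsilon = \D_\epsilon$: once $\Phi_{\D_\epsilon}$ is shown to be a smooth bijection, applying its inverse to $B_\epsilon$ itself produces a unique $L_\ast \in {\bf L}_n$ with $B_\epsilon = L_\ast \D_\epsilon L_\ast^T$, and then $\Phi_{B_\epsilon}(L) = \Phi_{\D_\epsilon}(L L_\ast)$, i.e.\ $\Phi_{B_\epsilon} = \Phi_{\D_\epsilon} \circ R_{L_\ast}$, where $R_{L_\ast}$ is right multiplication by $L_\ast$. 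Since ${\bf L}_n$ is a group under matrix multiplication with polynomial multiplication and inversion, $R_{L_\ast}$ is a smooth algorithmic diffeomorphism of ${\bf L}_n$, so all claims about $\Phi_{B_\epsilon}$ follow from the corresponding claims about $\Phi_{\D_\epsilon}$.

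\textbf{Plan for Part (2): the canonical case by induction on $n$.} The base $n = 1$ is $L = ( \sqrt{\epsilon_1 a} )$ for $A = (a) \in LPM_1(\epsilon_1)$. For the inductive step, partition $A \in LPM_n(\epsilon)$ as $A = \bigl( \begin{smallmatrix} A_{n-1} & a \\ a^T & \alpha \end{smallmatrix} \bigr)$; since $A_{n-1} \in LPM_{n-1}(\epsilon_1, \dots, \epsilon_{n-1})$, the inductive hypothesis yields a unique $L_{n-1} \in {\bf L}_{n-1}$ factoring it. Matching blocks in the ansatz $L = \bigl( \begin{smallmatrix} L_{n-1} & 0 \\ \ell^T & \lambda \end{smallmatrix} \bigr)$ forces $\ell$ to be explicitly solved from the off-diagonal equation and pins down $\lambda^2 = \Delta_n(A) / \bigl( \Delta_{n-1}(A) \cdot \epsilon_{n-1}\epsilon_n \bigr)$ via the block-determinant identity. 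Because $\Delta_k(A)$ has sign $\epsilon_k$, both numerator and denominator have sign $\epsilon_n$, so $\lambda^2 > 0$ and there is a unique positive square root $\lambda$. Every entry of $L$ is then a rational function of the entries of $A$ with denominators products of leading principal minors (nonvanishing on $LPM_n(\epsilon)$), composed with finitely many positive square roots, hence smooth; the construction is algorithmic by design; and preservation of Borel and Lebesgue sets is automatic for smooth diffeomorphisms between open subsets of Euclidean space.

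\textbf{Main obstacle.} The single hinge in the argument is the positivity of $\lambda^2$ at the inductive step, which rests on the telescoping sign identity $\prod_{k=1}^n (\D_\epsilon)_{kk} = \epsilon_n$ feeding cleanly into the block-determinant formula. Once this bookkeeping is secured, everything else --- smoothness, algorithmicity, the reduction from arbitrary $B_\epsilon$ to $\D_\epsilon$, and the measure-theoretic consequences --- is formal.
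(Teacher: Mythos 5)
Your proposal is correct and follows essentially the same strategy as the paper: perturb by $t\,\Id_n$ for Part~(1), and induct on $n$ via the block/Schur-complement structure for Part~(2). The one genuinely useful variation is your reduction of the arbitrary-$B_\epsilon$ case to the canonical $\D_\epsilon$ case: writing $B_\epsilon = L_\ast \D_\epsilon L_\ast^T$ and observing that $\Phi_{B_\epsilon} = \Phi_{\D_\epsilon}\circ R_{L_\ast}$ with $R_{L_\ast}$ a smooth diffeomorphism of the group $({\bf L}_n,\cdot)$. The paper instead runs the induction directly with a general $B_\epsilon = \bigl(\begin{smallmatrix} M & \mathbf{u} \\ \mathbf{u}^T & v\end{smallmatrix}\bigr)$, carrying along ${\bf u}$ and $v$ throughout; your reduction eliminates this bookkeeping, at the cost of a preliminary factorization that is itself one application of the inductive machinery. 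Also worth noting: for the forward containment you use the block-triangular observation $(L B_\epsilon L^T)_{[k][k]} = L_{[k][k]}(B_\epsilon)_{[k][k]}L_{[k][k]}^T$ directly, whereas the paper invokes Cauchy--Binet; these deliver the same identity. For density, the paper isolates an explicit interval $(0,t_A)$ with $t_A$ the least absolute value of nonzero eigenvalues of leading blocks, while you argue via finitely many roots of the $n$ polynomials $\Delta_k(A+t\Id_n)$ --- and, as you may notice, once those roots are avoided on some $(0,\delta)$ the signs are automatically constant there by continuity, so your final pigeonhole step is unnecessary (though harmless).
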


\begin{remark}
We stress that our proof of Theorem~\ref{Tlpm} will
(a)~show that each Cholesky-type factorization $\Phi_{B_\epsilon}^{-1} :
L B_\epsilon L^T \mapsto L$ is a diffeomorphism; and
(b)~provide an explicit algorithm to write down $\Phi_{B_\epsilon}^{-1}$.
The latter should have consequences on the numerical side and in
applications.
\end{remark}

\begin{remark}
The LDU factorization sends all $A \in LPM_n(\epsilon)$ to $L \cdot D
\cdot U$, where the matrix $D$ has diagonal entries the ratios of
successive leading principal minors of $A$. In contrast, our algorithmic
recipe inserts (any) \textit{fixed} matrix $B_\epsilon$ in the middle --
e.g.\ one may take the diagonal matrix $\D_\epsilon$ -- which is
independent of choice of $A$. As a result, our algorithmic
factorization-diffeomorphism necessarily differs from the LDU
factorization. Furthermore, when $\epsilon = {\bf 1}_n$ and $B_\epsilon =
\D_{{\bf 1}_n} = \Id_n$ -- so that the ``$D$'' is fixed and independent
of $A$ -- the factorization specializes to the usual Cholesky
factorization over $PD_n$. Hence we have used ``Cholesky'' in naming and
describing our maps.
\end{remark}

We record some more ramifications of Theorem~\ref{Tlpm}. First,
$\Phi_{B_\epsilon}$ is indeed a twofold extension of the ``usual''
Cholesky decomposition: it firstly holds for every sign pattern $\epsilon
\in \{ \pm 1 \}^n$. Additionally, for each of these cones -- including
the positive definite cone for $\epsilon = {\bf 1}_n$ -- we provide ${\bf
L}_n$-many Cholesky-type decompositions $\Phi_{B_\epsilon}$, one for
\textit{every} $B_\epsilon$ -- and all of them are smooth diffeomorphisms
(not just bijections or homeomorphisms).

Second, one can move smoothly between the nonempty sets
$LPM_n(\epsilon)$ in multiple ways: either via their diffeomorphisms to
the Euclidean ball $B_{\R^{n(n+1)/2}}({\bf 0}, 1)$; or by choosing any
$B_\epsilon \in LPM_n(\epsilon)$ for each such nonempty set, and then
using
\begin{equation}\label{Ecomposition}
\Phi_{B_{\epsilon'}} \circ \Phi_{B_\epsilon}^{-1} :
LPM_n(\epsilon) \longrightarrow {\bf L}_n \longrightarrow
LPM_n(\epsilon').
\end{equation}
This map is a composition of two smooth diffeomorphisms. (For a third
way, see Remark~\ref{Rsanity}.)

\begin{remark}
Note by Theorem~\ref{Tlpm} that one ``knows'' the set $LPM_n(\epsilon)$
for any $\epsilon$. Namely, upon fixing an arbitrary matrix $B_\epsilon
\in LPM_n(\epsilon)$ (e.g.~\eqref{Elpmdiag}), the $LPM_n(\epsilon)$
matrices are precisely $L B_\epsilon L^T$, and this is a bijection -- in
fact, smooth diffeomorphism -- onto the space ${\bf L}_n$ of all $L$.
\end{remark}

We conclude this part by noting that ``modified'' Cholesky factorizations
of non-positive definite matrices have previously appeared in the
literature. For instance, Gill and Murray \cite{GM} introduced a
factorization $A + E = LDL^T$ with $L$ lower triangular and $D$ diagonal,
in the context of Newton-type methods in numerical optimization. An
alternative modified Cholesky algorithm was proposed by Cheng--Higham
\cite{CH}, in which $A$ is indefinite but $A+E$ is positive definite for
some perturbation $E = E^T$, and $A = (P L) D (P L)^T$ for some lower
unitriangular $L$, diagonal $D$, and permutation matrix $P$. (See
also~\cite{Higham}.) Our results differ in spirit as well as in explicit
form from these works.

\subsection{Cholesky factorizations via trailing minors}

We discussed above what happens when one considers leading principal
minors. We will discuss the other two alternatives -- from the list at
the start of Section~\ref{Sublpm} -- beginning with option~(3).
(Option~(1) is discussed in Appendix~\ref{Sssrpm}.)

\begin{defn}
Given an integer $n \geq 1$ and a sign pattern $\epsilon \in \{ \pm 1
\}^n$, a real symmetric matrix $A_{n \times n}$ is said to be
$TPM_n(\epsilon)$ (\textit{Trailing Principal Minors} with sign pattern
$\epsilon$) if for all $1 \leq k \leq n$, the trailing principal $k
\times k$ minor of $A$ is nonzero of sign $\epsilon_k$. We say $A_{n
\times n}$ is TPM if $A$ is $TPM_n(\epsilon)$ for some $\epsilon \in \{
\pm 1 \}^n$:
\begin{equation}
TPM_n = \bigsqcup_{\epsilon \in \{ \pm 1 \}^n} TPM_n(\epsilon), \qquad n
\geq 1.
\end{equation}
\end{defn}

Akin to Theorem~\ref{Tlpm}, we record the analogous results for TPM
matrices. While this can be done ``from first principles'' in direct
analogy to LPM matrices, our proof will go through using either a linear
or a nonlinear diffeomorphism to $TPM_n(\epsilon)$ from
$LPM_n(\epsilon')$, for some sign patterns $\epsilon'$.

\begin{theorem}[TPM cones and the reversal map]\label{Ttpm}
Theorem~\ref{Tlpm}(1) goes through verbatim for all $TPM_n(\epsilon)$.
Moreover, given $n \geq 1$ and $\epsilon \in \{ \pm 1 \}^n$, and a matrix
$C_\epsilon \in TPM_n(\epsilon)$, the map
\[
\Phi^{C_\epsilon} : {\bf L}_n \to TPM_n(\epsilon) \quad \text{defined by}
\quad L \mapsto L^T \cdot C_\epsilon \cdot L
\]
is a smooth diffeomorphism such that the ``reverse Cholesky
decomposition'' $(\Phi^{C_\epsilon})^{-1}$ is algorithmic.
Hence it too preserves the families of Borel and Lebesgue sets.

Also define $P_n \in \R^{n \times n}$ to be the anti-diagonal permutation
matrix with $(u,v)$ entry $1$ if $u+v=n+1$, and $0$ otherwise; and define
the ``reversal'' of a square matrix to be the anti-involution
\begin{equation}\label{Ereversal}
\overset{\leftarrow}{(\cdot)} : \C^{n \times n} \to \C^{n \times n}, \qquad
A \mapsto (P_n A P_n)^*.
\end{equation}

This map is a linear (smooth) diffeomorphism $: LPM_n(\epsilon)
\longleftrightarrow TPM_n(\epsilon)$. Moreover, it yields a commuting
square of (reversible) smooth diffeomorphisms for every $B_\epsilon \in
LPM_n(\epsilon)$:
\begin{equation}\label{Erevsq}
\begin{CD}
L \in {\bf L}_n @>>> \rev{L} = (P_n L P_n)^T \in {\bf L}_n\\
@V\Phi_{B_\epsilon}VV   @V\Phi^{\rev{B_\epsilon}}VV \\
A = L B_\epsilon L^T \in LPM_n(\epsilon) @>>>
\rev{A} = \rev{L}^T \rev{B_\epsilon} \rev{L} \in TPM_n(\epsilon)
\end{CD}
\end{equation}
\end{theorem}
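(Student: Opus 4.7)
The plan is to deduce Theorem~\ref{Ttpm} from Theorem~\ref{Tlpm} by transport along the reversal map $\rev{(\cdot)}: A \mapsto (P_n A P_n)^*$, thereby avoiding a second ``from scratch'' development for trailing minors.

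First I would record the basic properties of $\rev{(\cdot)}$. Since $P_n^* = P_n$ and $P_n^2 = \Id_n$, one has $\rev{\rev{A}} = \bigl(P_n (P_n A P_n)^* P_n\bigr)^* = A$, so $\rev{(\cdot)}$ is an $\R$-linear smooth involution of $\C^{n \times n}$ that preserves Hermitian (and restricted to $\R^{n \times n}$, symmetric) matrices; on such matrices it simplifies to $A \mapsto P_n A P_n$. The core principal-minor computation is that the trailing $k \times k$ principal submatrix of $P_n A P_n$ is obtained from the leading $k \times k$ principal submatrix of $A$ by simultaneously reversing its row and column orderings, which preserves determinants (the sign $\det(P_k)$ appears once for rows and once for columns, and cancels). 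Hence $\rev{(\cdot)}$ restricts to a linear diffeomorphism $LPM_n(\epsilon) \longleftrightarrow TPM_n(\epsilon)$ for every sign pattern $\epsilon$. From this, part~(1) of Theorem~\ref{Tlpm} transfers verbatim to the TPM side: non-emptiness, pairwise disjointness, openness and density in the symmetric matrices, and entrywise approximation are all preserved under a linear homeomorphism of the ambient symmetric space.

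For part~(2) I would first check that $\rev{(\cdot)}$ stabilises ${\bf L}_n$: the product $P_n L^T P_n$ turns the upper-triangular $L^T$ into a lower-triangular matrix whose diagonal is the (positive) diagonal of $L$ in reverse order. Given $B_\epsilon \in LPM_n(\epsilon)$, put $C_\epsilon := \rev{B_\epsilon} \in TPM_n(\epsilon)$; the commuting square~\eqref{Erevsq} then follows from the direct calculation
\[
\rev{L}^T\, \rev{B_\epsilon}\, \rev{L}
= (P_n L P_n)(P_n B_\epsilon P_n)(P_n L^T P_n)
= P_n (L B_\epsilon L^T) P_n
= \rev{A},
\]
where $A := L B_\epsilon L^T$, using $P_n^2 = \Id_n$ and the symmetry of $B_\epsilon$. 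Since every $C_\epsilon \in TPM_n(\epsilon)$ arises as $\rev{B_\epsilon}$ for $B_\epsilon := \rev{C_\epsilon} \in LPM_n(\epsilon)$, one obtains
\[
\Phi^{C_\epsilon} \;=\; \rev{(\cdot)} \circ \Phi_{\rev{C_\epsilon}} \circ \rev{(\cdot)},
\]
exhibiting $\Phi^{C_\epsilon}$ as a composition of three smooth diffeomorphisms (by Theorem~\ref{Tlpm}(2) together with the properties of $\rev{(\cdot)}$ above), hence a smooth diffeomorphism; its inverse is algorithmic because $\rev{(\cdot)}$ is an explicit linear map and $\Phi^{-1}_{\rev{C_\epsilon}}$ is algorithmic by Theorem~\ref{Tlpm}.

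There is no deep obstacle; the one thing to track carefully is the interplay between the conjugation $P_n(\cdot)P_n$ and the adjoint $(\cdot)^*$. In particular, it is the transpose inside $\rev{(\cdot)}$ that exchanges the positions of $L$ and $L^T$ in the product, producing the ``reverse'' form $L^T C_\epsilon L$ on the TPM side rather than $L C_\epsilon L^T$; this is precisely what makes $\Phi^{C_\epsilon}$ a bona fide Cholesky-type map for trailing minors.
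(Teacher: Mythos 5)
Your proof is correct and follows essentially the same route as the paper: transport from $LPM_n(\epsilon)$ to $TPM_n(\epsilon)$ along the linear involution $A \mapsto P_n A P_n$, which swaps leading and trailing principal submatrices, and simultaneously conjugate on Cholesky space by $L \mapsto \rev{L} = P_n L^T P_n$. The paper (via Proposition~\ref{Ptpm}) also records a second, nonlinear transport map $A \mapsto A^{-1}$ (which sends $LPM_n(\epsilon)$ to $TPM_n(\epsrev)$, proved using Jacobi's complementary-minor formula), but that is not needed for what you prove, and your single-route argument via $\rev{(\cdot)}$ is complete.
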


The commuting square~\eqref{Erevsq} will be useful later, in defining and
relating Wishart densities on the ``dual'' cones $LPM_n(\epsilon)$ and
$TPM_n(\epsilon)$ for all $n,\epsilon$.

\begin{remark}
For brevity, we will write ``diffeomorphism'' to denote
``smooth/$C^\infty$ diffeomorphism''.
\end{remark}

\subsection{Riemannian geometry (and mean)}

As is well-known, the positive definite cone $PD_n$ is also convex, which
implies that it admits the Euclidean metric and Euclidean geodesics (line
segments) -- so the Euclidean mean of $A,B$ is $(A+B)/2$. Alternately,
$PD_n$ is a Riemannian manifold, where the geometric mean $A \# B =
\gamma_{A,B}(1/2)$ \cite{ALM,Bhatia,Moakher,PW} aligns with a natural
definition of geodesics in $PD_n$, from $A$ to $B$:
\[
\gamma_{A,B} : [0,1] \to PD_n, \quad \text{defined by} \quad
\gamma_{A,B}(t) := A^{1/2} (A^{-1/2} B A^{-1/2})^t A^{1/2}.
\]
Recently, Lin~\cite{Cholesky} introduced a different Riemannian manifold
structure on $PD_n$, via the Cholesky decomposition. It is this structure
that is of interest in the current work.

As we will explain below, not all cones $LPM_n(\epsilon)$ are similarly
convex; thus, it is natural to seek a Riemannian metric. The following
result records such a metric's existence and some properties.

\begin{utheorem}[Lie group structure]\label{Triemannian}
For every $n \geq 1$ and sign pattern $\epsilon \in \{ \pm 1 \}^n$, the
cone $LPM_n(\epsilon)$ is both a Riemannian manifold (with sectional
curvature zero) as well as an abelian Lie group, under whose action the
Riemannian metric is translation (bi-)invariant. In particular, the
log-Cholesky mean/barycentre of $A = L \D_\epsilon L^T, A' = K
\D_\epsilon K^T \in LPM_n(\epsilon)$ (with $L,K \in {\bf L}_n$) is
\[
A \circledast^{1/2} A' := (L \circledcirc^{1/2} K) \D_\epsilon
(L \circledcirc^{1/2} K)^T, \quad \text{where} \quad 
L \circledcirc^{1/2} K = \begin{cases}
(l_{ij} + k_{ij})/2, \quad & \text{if } i \neq j,\\
\sqrt{l_{jj} k_{jj}}, & \text{if } i=j.
\end{cases}
\]
The analogous results hold over every cone $TPM_n(\epsilon)$.
\end{utheorem}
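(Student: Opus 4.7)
The plan is to transport the log-Cholesky Lie-group-with-metric structure of Lin~\cite{Cholesky} from the Cholesky space ${\bf L}_n$ onto each cone $LPM_n(\epsilon)$ via the diffeomorphism $\Phi_{B_\epsilon}$ from Theorem~\ref{Tlpm}(2), and then to transfer the whole package to $TPM_n(\epsilon)$ using Theorem~\ref{Ttpm}. First I would recall the log-Cholesky group $({\bf L}_n, \circledcirc)$: the operation is entrywise addition on strictly lower triangular coordinates and entrywise multiplication on the positive diagonals, equivalently componentwise addition in the global chart
\[
\chi : L \mapsto \bigl( (l_{ij})_{i>j},\ (\log l_{jj})_{j} \bigr) \in \R^{n(n+1)/2}.
\]
Under $\chi$, the group $({\bf L}_n, \circledcirc)$ is an abelian Lie group isomorphic to $(\R^{n(n+1)/2},+)$, and the pullback by $\chi$ of the standard Euclidean inner product is a flat (hence sectional curvature zero), translation bi-invariant Riemannian metric whose barycentre of $L, K$ is exactly $L \circledcirc^{1/2} K$ as written in the theorem.

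Next, for each sign pattern $\epsilon \in \{\pm 1\}^n$, I would fix $B_\epsilon := \D_\epsilon$ of~\eqref{Elpmdiag} (any choice works; $\D_\epsilon$ is symmetric, canonical, and compatible with the commuting square~\eqref{Erevsq}) and declare $\Phi_{\D_\epsilon} : {\bf L}_n \to LPM_n(\epsilon)$ to be simultaneously a Lie-group isomorphism and a Riemannian isometry. This is legitimate precisely because Theorem~\ref{Tlpm} supplies $\Phi_{\D_\epsilon}$ as a smooth diffeomorphism. Concretely, for $A = L \D_\epsilon L^T$ and $A' = K \D_\epsilon K^T$ one sets
\[
A \circledast A' := (L \circledcirc K)\, \D_\epsilon\, (L \circledcirc K)^T,
\]
and equips $LPM_n(\epsilon)$ with the pushforward of the log-Cholesky metric. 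Abelianness, flatness, and bi-invariance then descend automatically, being invariants of group isomorphism and of isometry, respectively; and since $\Phi_{\D_\epsilon}$ is an isometry, the Riemannian barycentre $A \circledast^{1/2} A'$ is just the image under $\Phi_{\D_\epsilon}$ of the barycentre $L \circledcirc^{1/2} K$ of $L, K$ in ${\bf L}_n$, yielding exactly the stated formula.

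For $TPM_n(\epsilon)$ I would run the same argument using $\Phi^{C_\epsilon} : {\bf L}_n \to TPM_n(\epsilon)$ (e.g.\ with $C_\epsilon = \rev{\D_\epsilon}$) in place of $\Phi_{\D_\epsilon}$; equivalently, the commuting square~\eqref{Erevsq} shows that the linear involution $\rev{(\cdot)}$ of~\eqref{Ereversal} carries the $LPM$-side structure isometrically and isomorphically to the $TPM$-side, so nothing new has to be checked. The main temptation to resist is to search for a metric intrinsic to the matrix entries of $LPM_n(\epsilon)$ (which will generically not be flat, since the cones are not even convex for arbitrary $\epsilon$); the conceptual step is to accept Cholesky coordinates, rather than matrix entries, as the correct chart. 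Once this is granted, the only calculation that remains is to verify that Lin's metric on ${\bf L}_n$ is flat and bi-invariant with the explicit barycentre formula above, which is immediate from the fact that $\chi$ is a group isomorphism onto $(\R^{n(n+1)/2},+)$ pulling back the Euclidean metric.
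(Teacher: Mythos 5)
Your proposal is correct and follows essentially the same route as the paper: transfer Lin's log-Cholesky metric and abelian group structure from ${\bf L}_n$ to $LPM_n(\epsilon)$ by pushforward along the diffeomorphism $\Phi_{\D_\epsilon}$ of Theorem~\ref{Tlpm}, then carry everything to $TPM_n(\epsilon)$ with the reversal map. The only cosmetic difference is that you invoke the flat chart $\chi$ (the paper's $\eta$ from Theorem~\ref{Tinnerproduct}) from the outset to get flatness, bi-invariance, and the barycentre formula in one stroke, whereas the paper first records Lin's Riemannian formulas for ${\bf L}_n$ and deduces zero sectional curvature afterward via Lin's Proposition~8 or Theorem~\ref{Teuclidean}.
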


\subsection{Hilbert space towers; Hermitian matrices}

One may wonder if there is a familiar model for the above (isomorphic)
abelian metric groups $LPM_n(\epsilon)$, $TPM_n(\epsilon)$, and ${\bf
L}_n$ -- perhaps including the translation-invariant metric too. Our next
result shows that this abelian metric group is isometrically isomorphic
to a flat, normed space -- in fact, to a Hilbert space:

\begin{utheorem}[Hilbert space structure]\label{Teuclidean}
The abelian metric Lie groups (from Theorem~\ref{Triemannian}) ${\bf
L}_n$ and all $LPM_n(\epsilon), TPM_n(\epsilon)$ are isometrically
isomorphic to the Euclidean space $\mathbb{R}^{n(n+1)/2}$ -- and hence
separable and complete.
\end{utheorem}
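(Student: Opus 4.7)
The plan is to reduce the theorem to a single explicit isometric Lie-group isomorphism from $\mathbf{L}_n$ to Euclidean space, and then transport it through the Cholesky-type diffeomorphisms of Theorems~\ref{Tlpm} and~\ref{Ttpm}. For $L=(l_{ij})\in\mathbf{L}_n$, I would define
\[
\Psi(L) := \bigl((l_{ij})_{i>j},\;(\log l_{jj})_{j=1}^n\bigr)\in\mathbb{R}^{n(n-1)/2}\oplus\mathbb{R}^n=\mathbb{R}^{n(n+1)/2},
\]
with evident smooth inverse (exponentiate the last $n$ coordinates); this is my candidate isomorphism on the $\mathbf{L}_n$ side.

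To see that $\Psi$ is a group homomorphism, recall that the abelian Lie-group law on $\mathbf{L}_n$ underlying the mean formula in Theorem~\ref{Triemannian} is entrywise addition on the strictly lower triangular part and entrywise multiplication on the diagonal; $\Psi$ turns the latter into addition via $\log(l_{jj}k_{jj})=\log l_{jj}+\log k_{jj}$. For the isometry assertion, both sides carry translation-invariant Riemannian metrics, so it suffices to check that $d\Psi_{\Id_n}$ is a linear isometry at the group identity $\Id_n\in\mathbf{L}_n$. The log-Cholesky metric at $\Id_n$ pairs two lower-triangular tangent vectors $X,Y$ by $\sum_{i>j}x_{ij}y_{ij}+\sum_j x_{jj}y_{jj}$—the diagonal weight $1/l_{jj}^2$ from Lin's formula evaluates to $1$—while $d\Psi_{\Id_n}$ is the identity on the strictly lower block and also the identity on the diagonal block since $(d/dt)\log t|_{t=1}=1$. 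Hence the standard Euclidean inner product on $\mathbb{R}^{n(n+1)/2}$ pulls back to precisely this form.

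With the $\mathbf{L}_n$ case in hand, the remaining cones are obtained by composition. Theorem~\ref{Triemannian} transports the abelian group and the translation-invariant metric from $\mathbf{L}_n$ to each $LPM_n(\epsilon)$ via the diffeomorphism $\Phi_{\D_\epsilon}:L\mapsto L\,\D_\epsilon L^T$ (and to $TPM_n(\epsilon)$ via $\Phi^{\rev{\D_\epsilon}}$, compatibly with the reversal square~\eqref{Erevsq}), so these maps are isometric Lie isomorphisms by construction. Composing with $\Psi$ (or $\Psi^{-1}$) therefore furnishes the desired isometric abelian Lie-group isomorphisms between $\mathbb{R}^{n(n+1)/2}$ and each cone, and separability and completeness drop out since each target is now a finite-dimensional Euclidean space.

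The main obstacle I anticipate is bookkeeping the \emph{correct} translation-invariant metric on $\mathbf{L}_n$: translation-invariance plus specification at one point pins it down, and the key happy accident is that Lin's diagonal weight $1/l_{jj}^2$ is exactly the Jacobian factor needed to convert $dl_{jj}$ into $d(\log l_{jj})$, which is precisely what makes the log-Cholesky metric flat in the $\Psi$-coordinates. Once that is recognized, the isometry claim collapses to a one-line chain rule, and the rest is routine transport of structure.
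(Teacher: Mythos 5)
Your proposal is correct and is essentially the paper's own argument: the paper (Theorem~\ref{Tinnerproduct}) defines the same map $\eta(L) = (\log l_{11}, \dots, \log l_{nn}; \{l_{ij}: i>j\})$, observes it is an $\R$-linear isomorphism for the $\circledcirc$-group structure and scalar multiplication $\alpha\cdot L = \alpha\floo{L} + \D(L)^\alpha$, and then reads off the isometry directly from the explicit log-Cholesky distance formula~\eqref{ElogCholesky}. The only (inessential) difference is that you verify the isometry infinitesimally at $\Id_n$ and invoke translation-invariance on both sides, whereas the paper short-circuits this by matching the already-derived global distance formula $d_{\mathbf L_n}(L,K)^2 = \sum_{i>j}(l_{ij}-k_{ij})^2 + \sum_j(\log l_{jj}-\log k_{jj})^2$ against the Euclidean norm of $\eta(L)-\eta(K)$; both verifications are one line.
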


Thus in addition to real analysis, the powerful and extensive machinery
of ``finite-dimensional Euclidean space probability'' immediately applies
to all of these groups. Moreover, as these are groups (in fact Hilbert
spaces) of matrices -- but with a different group operation than usual
matrix addition or multiplication -- it may be of interest to explore
random matrix theory (e.g.~\cite{FK}) on the LPM and TPM spaces, as well
as connections to geometry and to ergodic theory,
e.g.~\cite{HS,Pollicott}.

\begin{remark}
Note that the cone $PD_n$ is convex, and so admits flat geodesics (line
segments) in Euclidean space. It was also asserted
in~\cite[pp.~1355]{Cholesky} that Cholesky space ${\bf L}_n$ and $PD_n$
are not Riemannian submanifolds of a Euclidean space.
But Theorem~\ref{Teuclidean} shows that actually, they \textit{are}
indeed such submanifolds -- to be precise, they are (isomorphic to) all
of the finite-dimensional Hilbert space $\R^{n(n+1)/2}$. More generally,
so is $LPM_n(\epsilon)$ for every $\epsilon$, even if it is not always
convex (e.g.\ see Example~\ref{Exnotconvex} for $\epsilon_2 = -1$).

The point is that $PD_n$ is a dense convex cone inside the positive
semidefinite matrices -- in the \textit{usual} Euclidean norm. But this
is not (equivalent to) the metric on $LPM_n(\epsilon)$ for any $\epsilon
\in \{ \pm 1 \}^n$. For instance, the sequence $\{ \frac{1}{k}
\D_\epsilon : k \geq 1 \}$ is unbounded in the metric on
$LPM_n(\epsilon)$, whereas $\frac{1}{k} \Id_n$ is bounded, even
Cauchy in the Euclidean norm on $PD_n$.
\end{remark}

Continuing along Hilbertian lines: notice the tower of Euclidean spaces
$\R^{\binom{2}{2}} \subset \R^{\binom{3}{2}} \subset \cdots$. This is
reflected in the Cholesky spaces and LPM-cones: given a sign sequence
$\beps := (\epsilon_1, \epsilon_2, \dots) \in \{ \pm 1 \}^\infty$,
\[
{\bf L}_1 \cong LPM_1((\epsilon_1)) \quad \hookrightarrow \quad {\bf L}_2
\cong LPM_2((\epsilon_1, \epsilon_2)) \quad \hookrightarrow \quad \cdots.
\]
These embeddings form a commuting square for each $n \geq 1$:
\begin{equation}\label{Esquare}
   \begin{CD}
     L \in {\bf L}_n @>>> L' := \begin{pmatrix} L & {\bf 0}_n \\ {\bf 0}_n^T & 1 \end{pmatrix} \in {\bf L}_{n+1}\\
     @V\Phi_{\D_\epsilon}VV   @V\Phi_{\D_{\epsilon'}}VV \\
     A = L \D_\epsilon L^T \in LPM_n(\epsilon) @>>>
     A' := L' \D_{\epsilon'} (L')^T \in LPM_{n+1}(\epsilon')
   \end{CD}
\end{equation}
where $\epsilon = (\epsilon_1, \dots, \epsilon_n)$,
$\epsilon' = (\epsilon_1, \dots, \epsilon_n, \epsilon_{n+1})$,
and
\[
A' := \Phi_{\D_{\epsilon'}}(L') = \displaystyle \begin{pmatrix} A & {\bf
0}_n \\ {\bf 0}_n^T & \epsilon_n \epsilon_{n+1} \end{pmatrix} =
\begin{pmatrix} L & {\bf 0}_n \\ {\bf 0}_n^T & 1 \end{pmatrix} \cdot
\begin{pmatrix} \D_\epsilon & {\bf 0}_n \\ {\bf 0}_n^T & \epsilon_n
\epsilon_{n+1} \end{pmatrix} \cdot \begin{pmatrix} L & {\bf 0}_n \\ {\bf
0}_n^T & 1 \end{pmatrix}^T.
\]

This tower of Euclidean spaces (via Theorem~\ref{Teuclidean}) has a union
/ direct limit, which in turn has a closure. We now identify model inner
product spaces for both of these.

\begin{utheorem}[Cholesky space towers]\label{Thilbert}
Fix a sign sequence $\beps = (\epsilon_1, \dots, \epsilon_n, \dots) \in
\{ \pm 1 \}^\infty$.
\begin{enumerate}[$(1)$]
\item Let ${\bf L}_{00}$ denote the set of semi-infinite real lower
triangular matrices with
(i)~all diagonal entries in~$(0,\infty)$,
(ii)~all but finitely many diagonal entries $1$, and
(iii)~all but finitely many entries below the diagonal zero.
Then ${\bf L}_{00}$ is the direct limit of the Euclidean spaces ${\bf
L}_n$ under the inclusions~\eqref{Esquare}, and is isomorphic to the real
inner product space $c_{00}$.

\item Analogously denote by $LPM_{00}(\beps)$ the semi-infinite real
symmetric matrices $\begin{pmatrix} A_{n \times n} & {\bf 0} \\ {\bf 0} &
\D_{\beps'} \end{pmatrix}$,
where $n \geq 1$, $A \in LPM_n((\epsilon_1, \dots, \epsilon_n))$, and
$\beps' = (\epsilon_n \epsilon_{n+1}; \, \epsilon_n \epsilon_{n+2},
\epsilon_{n+1} \epsilon_{n+3}, \dots)$.
Thus $LPM_{00}(\beps)$ is the direct limit of the Euclidean spaces
$LPM_n((\epsilon_1, \dots, \epsilon_n))$ under~\eqref{Esquare}, hence
$LPM_{00}(\beps) = \Phi_{\D_{\beps}}({\bf L}_{00}) \cong c_{00}$, where
$\Phi_{\D_{\beps}}(L) := L \D_{\beps} L^T$. 

\item Let ${\bf L}_{\mathcal{H}} \supset {\bf L}_{00}$ denote the
semi-infinite real lower triangular matrices $L = (l_{ij})_{i,j \geq 1}$,
with
(i)~all diagonal entries in $(0,\infty)$,
(ii)~$\sum_{j\geq 1} (\log l_{jj})^2 < \infty$, and
(iii)~$\sum_{i>j \geq 1} |l_{ij}|^2 < \infty$.
Then ${\bf L}_{\mathcal{H}}$ (and its transfer under $\Phi_{\D_{\beps}}$)
is isomorphic to $\ell^2_\R = \overline{c_{00}}$ as a real Hilbert space.
\end{enumerate}
\end{utheorem}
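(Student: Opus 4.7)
The pivotal input is Theorem~\ref{Teuclidean}, which provides the isometric isomorphism ${\bf L}_n \to \R^{n(n+1)/2}$. Concretely (as is suggested by Lin~\cite{Cholesky} and by the explicit log-Cholesky operation in Theorem~\ref{Triemannian}), this isomorphism is the coordinate map
\[
\mathcal{L}_n : L = (l_{ij}) \ \longmapsto \ (\log l_{jj})_{1 \leq j \leq n} \ \oplus \ (l_{ij})_{1 \leq j < i \leq n},
\]
under which the abelian group operation on ${\bf L}_n$ becomes coordinatewise addition and the translation-invariant Riemannian metric becomes the standard Euclidean metric. The whole proof reduces to transferring the claimed structural assertions to sequence-space models via $\mathcal{L}_n$.

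For part~(1), observe that the embedding ${\bf L}_n \hookrightarrow {\bf L}_{n+1}$ of~\eqref{Esquare} sends $L$ to $\begin{pmatrix} L & {\bf 0} \\ {\bf 0}^T & 1 \end{pmatrix}$; the new diagonal entry $1$ contributes $\log 1 = 0$ and the new subdiagonal entries are $0$, so under $\mathcal{L}_{n+1}$ this is precisely the standard inclusion $\R^{n(n+1)/2} \hookrightarrow \R^{(n+1)(n+2)/2}$ once one fixes a consistent enumeration (e.g.\ diagonals in order, then subdiagonals row-by-row). The direct limit of the Euclidean spaces ${\bf L}_n$ is therefore $\bigcup_n \R^{n(n+1)/2} \cong c_{00}$, and this matches ${\bf L}_{00}$ once the defining conditions (ii)--(iii) are read through $\mathcal{L}_\infty^{-1}$: eventually-$1$ diagonals together with eventually-$0$ subdiagonals correspond exactly to eventually-$0$ sequences.

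Part~(2) then follows almost for free from the commuting square~\eqref{Esquare}: $\Phi_{\D_\beps}$ is a diffeomorphism intertwining the two tower inclusions, so it transports the direct limit of the ${\bf L}_n$ to the direct limit of the $LPM_n((\epsilon_1,\dots,\epsilon_n))$. Iterating the block-formula in~\eqref{Esquare} shows that the image of ${\bf L}_{00}$ is precisely the set of semi-infinite matrices $\begin{pmatrix} A_{n \times n} & {\bf 0} \\ {\bf 0} & \D_{\beps'} \end{pmatrix}$ displayed in the statement, with $\beps'$ given by the advertised shift.

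For part~(3) the natural candidate is the extension $\mathcal{L}_\infty : {\bf L}_{\mathcal{H}} \to \ell^2_\R$ given by the same recipe. Conditions~(ii) and~(iii) in the definition of ${\bf L}_{\mathcal{H}}$ are designed to be precisely the assertion that the image sequence lies in $\ell^2_\R$; the map is therefore a bijection (invert coordinatewise, using $\exp$ on diagonal coordinates) and an isometry once both sides carry their natural Euclidean/$\ell^2$ inner products, and it restricts on ${\bf L}_{00}$ to the $c_{00}$-isomorphism of part~(1). Since $c_{00}$ is dense in $\ell^2_\R$, this identifies ${\bf L}_{\mathcal{H}}$ with the metric completion $\overline{{\bf L}_{00}}$, as asserted; the image under $\Phi_{\D_\beps}$ follows by the same transport argument. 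The only slightly delicate check -- and what I expect to be the main obstacle -- is that ${\bf L}_{\mathcal{H}}$ is itself closed under the formally extended log-Cholesky group operation, so that the identification with $\ell^2_\R$ is at the level of \emph{abelian metric groups} and not merely of metric spaces. But since $\mathcal{L}_\infty$ conjugates the operation to coordinatewise addition, this reduces to the fact that $\ell^2_\R$ is closed under addition; one then just verifies entry-by-entry that the formulas (products on the diagonal, sums off-diagonal) make sense under conditions~(ii)--(iii), which is straightforward from $|\log(xy)|^2 \leq 2(|\log x|^2 + |\log y|^2)$ and the triangle inequality.
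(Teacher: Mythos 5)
Your proposal is correct and follows essentially the same route as the paper: both use the coordinate map $\eta$ from Theorem~\ref{Tinnerproduct} (your $\mathcal{L}_n$, up to a harmless re-ordering of coordinates), verify its compatibility with the inclusions~\eqref{Esquare} so it passes to the direct limit $\cong c_{00}$, extend to ${\bf L}_{\mathcal{H}} \cong \ell^2_\R$ via the completion/density argument, and then transport all three parts to the LPM cones via $\Phi_{\D_{\beps}}$ using the same commuting squares. The only cosmetic difference is that you identify $\mathcal{L}_\infty : {\bf L}_{\mathcal{H}} \to \ell^2_\R$ directly by checking conditions (ii)--(iii) match membership in $\ell^2$, whereas the paper invokes uniqueness of Cauchy-continuous extension from the dense subspace ${\bf L}_{00}$; both are valid and amount to the same thing.
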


Thus, the Euclidean spaces $LPM_n((\epsilon_1, \dots, \epsilon_n))$ form
a nested sequence of based spaces (Riemannian manifolds), whose union
$LPM_{00}(\beps)$ is again a real inner product space. In addition, the
map $\Phi^{-1}$ on the image $\Phi({\bf L}_{\mathcal{H}})$ amounts to
\textit{semi-infinite Cholesky decomposition}.

We next turn to complex analogues of the above results.
These go through without any surprises.

\begin{utheorem}[Complex matrices]\label{Thermitian}
Fix an integer $n \geq 1$ and a sign pattern $\epsilon \in \{ \pm 1
\}^n$. Let ${\bf L}^\C_n$ denote the lower triangular matrices in $\C^{n
\times n}$ with positive real diagonals.
Now define $LPM^\C_n(\epsilon)$ to be the subset of Hermitian matrices $A
\in \C^{n \times n}$ whose leading principal $k \times k$ minor has sign
$\epsilon_k$ for all $1 \leq k \leq n$.
\begin{enumerate}[$(1)$]
\item The sets $LPM_n^\C(\epsilon)$ are all nonempty and pairwise
disjoint; and their union over all $\epsilon$ is open and dense in the $n
\times n$ Hermitian matrices.

\item Given any $B_\epsilon \in LPM_n^\C(\epsilon)$, the map
$\Phi^\C_{B_\epsilon} : {\bf L}^\C_n \to LPM^\C_n(\epsilon)$,
defined by $L \mapsto L \cdot B_\epsilon \cdot L^*$,
is a smooth (real) diffeomorphism with $(\Phi^\C_{B_\epsilon})^{-1}$
algorithmic.

\item The natural analogue of Theorem~\ref{Ttpm} also goes through for
$TPM_n^\C$ (defined similarly).

\item The analogues of Theorems~\ref{Teuclidean} and~\ref{Thilbert} go
through in the complex setting; the cone ${\bf L}_n^\C$ is isomorphic to
the {\em real} Euclidean space $\mathbb{R}^{n^2}$, while ${\bf
L}_{00}^\C$ and $LPM_{00}^\C(\beps)$ are isomorphic to $c_{00}^\R \oplus
c_{00}^\C$ (for the diagonal and off-diagonal elements, respectively) as
real inner product spaces. Similarly, ${\bf L}^\C_{\mathcal{H}}$ is
(linearly and isometrically) isomorphic to $\ell^2_\R \oplus \ell^2_\C$
as a real Hilbert space.
\end{enumerate}
\end{utheorem}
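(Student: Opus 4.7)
The plan is to adapt the proofs of Theorems~\ref{Tlpm},~\ref{Ttpm},~\ref{Triemannian},~\ref{Teuclidean}, and~\ref{Thilbert} to the Hermitian setting, carefully accounting for two changes: (a)~the congruence $L B_\epsilon L^T$ is replaced by the sesquilinear $L B_\epsilon L^*$; and (b)~the real-dimension count changes since off-diagonal entries are now complex. The unifying observation is that every leading principal $k \times k$ minor of a Hermitian matrix is real (being the determinant of a Hermitian submatrix), so the definition of $LPM_n^\C(\epsilon)$ via \emph{signs} of these minors is well-posed, and all continuity/openness arguments transfer.

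For part (1), nonemptyness holds because $\D_\epsilon$ from~\eqref{Elpmdiag} is real (hence Hermitian) and already lies in $LPM_n^\C(\epsilon)$; disjointness is immediate. Openness follows because each leading principal minor is a real-valued continuous function on Hermitian matrices, so strict sign conditions cut out an open set. Density is obtained by the same perturbation argument as in the real case: given Hermitian $A$, choose $\epsilon$ by filling in the signs of the nonzero leading principal minors of $A$ (arbitrarily at zero minors), and perturb $A$ by $t \D_\epsilon$ for small $t>0$, using the continuity and polynomial-in-$t$ structure of the perturbed minors.

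For part (2), the smoothness of $\Phi^\C_{B_\epsilon}$ is clear because $L \mapsto L B_\epsilon L^*$ is real-polynomial in the real and imaginary parts of the entries of $L$. To invert it algorithmically, I would mimic the real proof of Theorem~\ref{Tlpm}, proceeding column by column. At the first step, the $(1,1)$ entry of $A = L B_\epsilon L^*$ gives $a_{11} = l_{11}^2 (B_\epsilon)_{11}$ (the factor $l_{11}$ being real positive), forcing a unique $l_{11} > 0$ using the matching signs $\epsilon_1 = \operatorname{sign}(B_\epsilon)_{11}$; the subdiagonal entries of the first column of $L$ are then uniquely recovered as complex numbers. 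At the $j$-th step, the sign condition on the leading $j \times j$ minor of $A$ guarantees a positive real solution for $l_{jj}$, while $l_{ij}$ for $i > j$ are determined as complex numbers by triangular-solve. The Jacobian of $\Phi^\C_{B_\epsilon}$ at each $L$ is nonsingular (by tracking the recursion), yielding a smooth (real) diffeomorphism. A sanity check on real dimensions: $\dim_\R {\bf L}_n^\C = n + 2\binom{n}{2} = n^2 = \dim_\R \{\text{Hermitian } n \times n\}$.

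For part (3), note that the reversal map~\eqref{Ereversal} is deliberately defined as $A \mapsto (P_n A P_n)^*$, so Hermitian matrices go to Hermitian matrices, and principal trailing minors of $A$ equal principal leading minors of $\rev{A}$ with the same signs. Using $P_n^* = P_n$, $P_n^2 = \Id_n$, and $(XY)^* = Y^* X^*$, one checks that the commuting square~\eqref{Erevsq} goes through verbatim with every $L^T$ replaced by $L^*$, yielding the complex reverse Cholesky diffeomorphism $\Phi^{C_\epsilon}: L \mapsto L^* C_\epsilon L$ onto $TPM_n^\C(\epsilon)$. For part (4), equip ${\bf L}_n^\C$ with the group law $(L \circledcirc K)_{ij} = l_{ij} + k_{ij}$ for $i \neq j$ and $(L \circledcirc K)_{jj} = l_{jj} k_{jj}$, and the coordinate map $L \mapsto (\log l_{11}, \ldots, \log l_{nn}; \{\operatorname{Re} l_{ij}, \operatorname{Im} l_{ij}\}_{i>j})$ sends ${\bf L}_n^\C$ isomorphically and isometrically onto $\R^n \oplus \R^{n(n-1)} \cong \R^{n^2}$ (with the standard Euclidean inner product on the target). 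Transporting this structure via $\Phi^\C_{\D_\epsilon}$ yields the analogue of Theorem~\ref{Teuclidean}. The direct-limit and Hilbert-completion arguments of Theorem~\ref{Thilbert} then carry over by splitting diagonal (real) from strictly subdiagonal (complex) contributions, producing $c_{00}^\R \oplus c_{00}^\C$ at finite level and $\ell^2_\R \oplus \ell^2_\C$ in the limit, using standard completeness of $\ell^2$ over $\R$ and $\C$.

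The main obstacle is not conceptual but bookkeeping in part~(2): one must verify carefully that at every step of the recursive inversion, the sign hypothesis on the next leading principal minor translates to positivity of the candidate diagonal $l_{jj}^2 \cdot (\text{something real})$, so that a positive square root exists and is unique. Once this is confirmed, the remaining modifications to the real proofs are purely notational, which is precisely why the theorem asserts that the complex case ``goes through without any surprises.''
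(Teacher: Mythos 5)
Your overall strategy --- adapt the real proofs of Theorems~\ref{Tlpm},~\ref{Ttpm},~\ref{Teuclidean},~\ref{Thilbert} by replacing transposes with conjugate transposes and splitting real from imaginary coordinates --- is exactly what the paper does; in particular your coordinate map in part~(4) is the paper's $\eta^\C$ up to a reordering, your dimension count in~(2) is correct, and your reversal analysis in~(3) is the intended argument.

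However, the density argument you sketch for part~(1) contains a genuine error. You propose: choose $\epsilon$ by filling in the signs of the nonzero leading principal minors of $A$ and pick the remaining $\epsilon_k$ arbitrarily, then perturb $A$ by $t\,\D_\epsilon$. This perturbation does \emph{not} in general land in $LPM_n^\C(\epsilon)$. Take $n=2$, $A = \operatorname{diag}(0,1)$ (both leading minors vanish), and choose $\epsilon = (-1,1)$, so $\D_\epsilon = \operatorname{diag}(-1,-1)$. Then $A + t\,\D_\epsilon = \operatorname{diag}(-t,\,1-t)$ has leading minors $-t<0$ and $-t(1-t)<0$ for all small $t>0$, so it lies in $LPM_2^\C((-1,-1))$ rather than $LPM_2^\C((-1,1))$: the unperturbed $(2,2)$ entry forces $\epsilon_2 = \epsilon_1$. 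The sign pattern realized near $A$ is dictated by $A$ and the perturbation direction and cannot be prescribed freely at a vanishing minor. The paper's proof of Theorem~\ref{Tlpm}(1) (which it invokes verbatim for Hermitian $A$, whose eigenvalues remain real) avoids this by not trying to choose $\epsilon$: it perturbs by $t\,\Id_n$, uses the spectral shift $\sigma(A_{[k][k]} + t\,\Id_k) = \{\lambda_j + t\}$ together with the threshold $t_A$ to show each leading $k\times k$ minor has a constant nonzero sign for $0 < t < t_A$, and \emph{defines} $\epsilon$ to be the sign pattern so obtained. Replacing your perturbation step by this one repairs the argument, and the rest of your proposal then matches the paper's intended proof.
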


We also add that since ${\bf L}^\C_n$ is smoothly diffeomorphic to an
open real Euclidean ball, the remarks made after Theorem~\ref{Tlpm} are
equally valid for TPM matrices and for the complex variants of both.

\subsection{The cone $LPM_n$ is a complete abelian metric group;
stochastic inequalities}

The above sections showed that the cone $LPM_n$ is partitioned into $2^n$
pairwise diffeomorphic sub-cones $LPM_n(\epsilon)$ (and similarly
for $TPM_n$), each of which is
(a)~an abelian metric group (in fact isomorphic to a Euclidean space);
as well as
(b)~a Riemannian manifold with a translation-invariant metric. Thus, each
sub-cone is ``parallel'' to $PD_n$. However, a drawback is that this
model does not determine distances between elements of distinct cones
$LPM_n(\epsilon)$.

We show below that under a -- very similar but subtly different -- binary
operation, the entire cone $LPM_n$ can be equipped with structures~(a)
and~(b), such that
(b$'$)~every cone $LPM_n(\epsilon)$ isometrically embeds as a
submanifold, and
(a$'$)~$PD_n$ remains a subgroup.

\begin{utheorem}[Larger Lie groups]\label{Tbiggroup}
The cones $LPM_n$, $TPM_n$, and their Hermitian counterparts can each be
given the structure of a complete separable abelian group and a
Riemannian manifold -- with a bi-invariant metric -- such that $PD_n$,
together with the log-Cholesky metric, is the identity-component subgroup
of index $2^n$. However, none of these groups embed in any Banach space.
\end{utheorem}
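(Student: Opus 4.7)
The plan is to realize $LPM_n$ as the direct product of ${\bf L}_n$ (with its log-Cholesky abelian group from Theorem~\ref{Triemannian}) and the finite group $\{\pm 1\}^n$, transport every structure across a canonical bijection coming from Theorem~\ref{Tlpm}, and then block any Banach embedding by an elementary torsion argument. All but this final step is a formal transfer of what is already in Theorems~\ref{Tlpm}--\ref{Teuclidean}; the non-embedding is the only genuine obstacle.

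First I would invoke Theorem~\ref{Tlpm}(2) (with $B_\epsilon := \D_\epsilon$) to get the bijection
\[
\Phi : {\bf L}_n \times \{\pm 1\}^n \longrightarrow LPM_n, \qquad (L,\epsilon) \longmapsto L\D_\epsilon L^T,
\]
whose restriction to each slice ${\bf L}_n \times \{\epsilon\}$ is a diffeomorphism onto $LPM_n(\epsilon)$. Equip the source with the product group structure: the log-Cholesky operation $\circledcirc$ of Theorem~\ref{Triemannian} on ${\bf L}_n$ and componentwise multiplication on $\{\pm 1\}^n$. Transporting across $\Phi$ gives an abelian operation $\circledast$ on $LPM_n$ with identity $\Id_n = \D_{{\bf 1}_n}$; both $\circledast$ and inversion restrict to smooth maps on every pair of components, so $LPM_n$ becomes a (disconnected) abelian Lie group whose identity component is $\Phi({\bf L}_n \times \{{\bf 1}_n\}) = PD_n$, a closed subgroup with quotient $\{\pm 1\}^n$ of order $2^n$.

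Next, I would endow each $LPM_n(\epsilon)$ with the log-Cholesky Riemannian metric pulled back from ${\bf L}_n$ by $\Phi_{\D_\epsilon}$ (Theorem~\ref{Triemannian}); these assemble into a flat Riemannian metric on all of $LPM_n$ that is bi-invariant, because left translation by $A_0 = L_0 \D_{\epsilon_0} L_0^T$ sends $L \D_\epsilon L^T$ to $(L_0 \circledcirc L) \D_{\epsilon_0 \epsilon} (L_0 \circledcirc L)^T$, which acts on the ${\bf L}_n$-coordinate as the log-Cholesky isometry $L \mapsto L_0 \circledcirc L$ (and translates the discrete coordinate). For a bi-invariant \emph{distance function} finite across components, I would add the Hamming metric on $\{\pm 1\}^n$ to the log-Cholesky distance on ${\bf L}_n$; by Theorem~\ref{Teuclidean} each component is isometric to $\mathbb{R}^{n(n+1)/2}$, and a finite disjoint union of complete separable spaces is complete and separable, so the resulting metric group is complete, separable, and bi-invariant, with $PD_n$ as its identity-component subgroup carrying exactly Lin's log-Cholesky metric.

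The main obstacle is the non-embedding claim. For every $\epsilon \neq {\bf 1}_n$ one has $\D_\epsilon = \Phi(\Id_n,\epsilon)$, and since $\Id_n \circledcirc \Id_n = \Id_n$,
\[
\D_\epsilon \circledast \D_\epsilon = \Phi(\Id_n \circledcirc \Id_n,\,\epsilon \cdot \epsilon) = \Phi(\Id_n,{\bf 1}_n) = \Id_n,
\]
so $LPM_n$ carries $2^n - 1$ elements of order two. Any Banach space under addition is a real (hence rational) vector space and therefore torsion-free, so no injective group homomorphism $LPM_n \to V$ exists --- let alone a topological embedding. For $TPM_n$ I would transport the entire construction across the reversal anti-involution of Theorem~\ref{Ttpm} and the commuting square~\eqref{Erevsq}; $PD_n$ remains the identity component because positive-definite matrices have positive trailing principal minors. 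The Hermitian analogues go through verbatim with ${\bf L}_n^\C$ in place of ${\bf L}_n$ via Theorem~\ref{Thermitian}, and the torsion obstruction is identical.
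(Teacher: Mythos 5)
Your proposal is correct and takes essentially the same approach as the paper: the paper's Theorem~\ref{TbiggroupsF} introduces exactly the operation you transport across $\Phi$ (there denoted $\boxdot$, with $A \boxdot A' = (L\circledcirc L')\,\D_\epsilon\D_{\epsilon'}\,(L\circledcirc L')^T$, and one checks $\D_\epsilon\D_{\epsilon'} = \D_{\epsilon\circ\epsilon'}$), proves the isomorphism with $PD_n \times S_2^n$, equips the union with the $L^p$-norms $\|(d_{{\bf L}_n}(L,K),\,1-\delta_{\epsilon,\epsilon'})\|_p$, and identifies the $\D_\epsilon$ with $\epsilon\neq{\bf 1}_n$ as the nontrivial torsion. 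The only cosmetic differences are that you parametrize the positive-definite factor by ${\bf L}_n$ rather than by $PD_n$, and that you use the Hamming rather than the discrete (Kronecker-delta) metric on the $\{\pm 1\}^n$ factor -- both yield a complete separable bi-invariant metric inducing the discrete topology on that factor. Your torsion argument for the non-embedding (a Banach space is torsion-free, while $\D_\epsilon$ is an involution for $\epsilon\neq{\bf 1}_n$) is precisely how the paper derives this claim as well.
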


Theorem~\ref{Tbiggroup} implies that while Euclidean space probability
(or even Banach space probability~\cite{LT}) can not apply to $LPM_n$,
many fundamental stochastic inequalities hold nevertheless:

\begin{utheorem}[Probability]\label{Tprob}
Stochastic inequalities by Hoffman-J{\o}rgensen, L\'evy--Ottaviani,
Mogul'skii, and Ottaviani--Skorohod hold over each cone in
Theorem~\ref{Tbiggroup} (and over other subfields of $\C$).
\end{utheorem}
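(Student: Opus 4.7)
The plan is to observe that each cone in Theorem~\ref{Tbiggroup} is a complete separable abelian Polish group $(G,\cdot)$ equipped with a bi-invariant metric $d$, and then to transfer the four classical inequalities to this setting by purely formal translation. Setting $\|g\| := d(g, e_G)$, bi-invariance yields $\|gh\| \leq \|g\| + \|h\|$ and $\|g^{-1}\| = \|g\|$, so $\|\cdot\|$ behaves as a subadditive, symmetric pseudo-norm. Given independent $G$-valued random variables $X_1,\ldots,X_n$, the partial products $S_k := X_1 \cdot X_2 \cdots X_k$ play the role of partial sums, and $X_i (X_i')^{-1}$ (with $X_i'$ an independent copy) the role of the symmetrized increment; ``symmetric'' for a $G$-valued variable means $X \stackrel{d}{=} X^{-1}$.

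First I would restate each inequality in group form. L\'evy-Ottaviani reads
\[
\mathbb{P}\bigl(\max_{1 \leq k \leq n} \|S_k\| \geq t\bigr) \leq 2\, \mathbb{P}(\|S_n\| \geq t/2),
\]
for independent symmetric $X_i$; the Hoffmann-J{\o}rgensen tail bound reads
\[
\mathbb{P}(\|S_n\| > 2t + s) \leq \mathbb{P}\bigl(\max_i \|X_i\| > s\bigr) + 4\, \mathbb{P}(\|S_n\| > t)^2;
\]
and Ottaviani-Skorohod and Mogul'skii admit entirely analogous reformulations.

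Next, for each inequality I would revisit the textbook Banach-space proof (e.g.\ in Ledoux-Talagrand, or Hoffmann-J{\o}rgensen's original paper) and verify that it invokes only (i)~the group law, (ii)~subadditivity and symmetry of $\|\cdot\|$, (iii)~independence of the $X_i$, and (iv)~in the symmetric case, $X_i \stackrel{d}{=} X_i^{-1}$. For L\'evy-Ottaviani this is transparent: with $\tau := \min\{k : \|S_k\| \geq t\}$ one decomposes $S_n = S_\tau \cdot (S_\tau^{-1} S_n)$, applies subadditivity to conclude $\|S_n\| \geq t/2$ or $\|S_n^{-1} S_\tau\| \geq t/2$ on $\{\tau \leq n\}$, and invokes conditional symmetry of $S_n^{-1} S_\tau$ given $\mathcal{F}_\tau$. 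Ottaviani-Skorohod and Mogul'skii are minor variants on the same template. Extension to subfields $\F \subseteq \C$ is immediate, since the Hermitian analogues of Theorem~\ref{Tbiggroup} supply identical bi-invariant metric group structure.

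The main obstacle is Hoffmann-J{\o}rgensen, whose sharpest $L^p$-moment forms exploit convexity and the full Banach-space scaffolding -- which, by Theorem~\ref{Tbiggroup}, is genuinely unavailable, since these groups embed in no Banach space. Fortunately, the tail form asserted in Theorem~\ref{Tprob} requires only the classical disjoint-block decomposition of $\{\|S_n\| > 2t+s\}$ combined with subadditivity and the symmetrization trick, both of which are purely group-theoretic; so this proof too survives the translation intact, with no further technical ingredient.
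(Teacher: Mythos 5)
Your proposal takes a genuinely different route from the paper. The paper proves Theorem~\ref{Tprob} essentially by citation: it is a corollary of the more detailed Theorem~\ref{Tprob2}, whose proof invokes the general theorems of the authors' earlier papers \cite{KR1,KR2,Khare}, which established these inequalities over arbitrary separable metric semigroups (resp.\ metric monoids) with bi-invariant (resp.\ left-invariant) metric; the only verification needed in the present paper is that each listed cone fits those hypotheses. You instead propose to recapitulate the classical Banach-space proofs, checking step-by-step that only the group law, subadditivity and symmetry of $\|g\| = d(g,e_G)$, and independence are used. This is sound in spirit -- indeed, it is precisely the work the cited references carried out -- and your identification of the genuine obstruction (the $L^p$-moment forms of Hoffmann-J{\o}rgensen need Banach-space convexity, which by Theorem~\ref{Tbiggroup} is unavailable; only the tail forms survive) is the right one. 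However, be aware that the versions established in Theorem~\ref{Tprob2} are substantially sharper than the textbook forms you write down: the paper's L\'evy--Ottaviani, Mogul'skii, and Ottaviani--Skorohod inequalities hold \emph{without} any symmetry hypothesis on the $X_i$ (your L\'evy--Ottaviani statement assumes $X_i \stackrel{d}{=} X_i^{-1}$), and the paper's Hoffmann-J{\o}rgensen bound~\eqref{Ehj} is the refined $k$-parameter version of \cite[Theorem~A]{KR1}, of which your two-parameter inequality is a special case. Your approach buys elementariness and transparency; the paper's citation buys the strongest available form. If you wish your sketch to stand on its own, you would need to (i) drop the symmetry assumptions and use the Ottaviani--Skorohod reduction in place of symmetrization, and (ii) flag that you are establishing weaker forms of these inequalities than those actually invoked in Theorem~\ref{Tprob2}.
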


We elaborate on this theme in Section~\ref{Sprob}.

\begin{remark}
Notice that the Riemannian metric in Theorem~\ref{Tbiggroup} restricts to
the sub-cones $LPM_n(\epsilon), TPM_n(\epsilon)$ -- and as we will show,
agrees there with the metric in Theorem~\ref{Triemannian} -- for each
$\epsilon \in \{ \pm 1 \}^n$. However, these cones are all nontrivial
cosets of $PD_n$ for $\epsilon \neq {\bf 1}_n$, hence not subgroups of
$LPM_n$ or $TPM_n$. Viewed ``dually'', the group operation on
$LPM_n(\epsilon)$ in Theorem~\ref{Triemannian} differs from that in
Theorem~\ref{Tbiggroup} for every $\epsilon \neq {\bf 1}_n$.

That said, certainly the inequalities in Theorem~\ref{Tprob} also hold
over each individual cone $LPM_n(\epsilon)$, $TPM_n(\epsilon)$ under the
metric in Theorems~\ref{Triemannian} and~\ref{Teuclidean}. We omitted
saying this because far more (i.e., all of finite-dimensional
probability) holds on these individual cones.
\end{remark}

\subsection{Wishart and other densities on LPM and TPM cones}

Finally, we come to distributions used in multivariate (statistical)
analysis and random matrix theory on the positive definite cone. As a
concrete/working example, we will transfer the (inverse) Wishart and
lognormal probability densities from the PD cone to every
$LPM_n(\epsilon)$ and $TPM_n(\epsilon)$ cone, via the following recipe.

\begin{defn}\label{Dwishart}
Let $n \geq 1$ and let $Q$ be a probability distribution with support in
$PD_n$ and density function $f_Q(\cdot)$. Given $\epsilon \in \{ \pm 1
\}^n$, define the ``transfer'' distributions $Q_\epsilon^{LPM},
Q_\epsilon^{TPM}$ via:
\begin{align}
&\ f_{\epsilon,Q}^{LPM} : LPM_n(\epsilon) \to [0,\infty), \qquad
{\bf M} = L \D_\epsilon L^T \mapsto f_Q(L L^T)\\
\text{and} \quad
&\ f_{\epsilon,Q}^{TPM} : TPM_n(\epsilon) \to [0,\infty), \qquad
\rev{\bf M} = K^T \D_{\epsrev} K \mapsto f_Q(K^T K).
\end{align}
\end{defn}

These make sense given the bijections $\Phi_{\D_\epsilon},
\Phi^{\D_\epsilon}$ from Theorems~\ref{Tlpm} and~\ref{Ttpm}. The point
there was that these maps are in fact nicer topologically -- being smooth
diffeomorphisms -- which has the ``probability consequence'' that they
preserve the Borel $\sigma$-algebras in both sub-cones of $\mathbb{R}^{n
\times n}$ -- even the Lebesgue $\sigma$-algebras, being continuous.
But even more holds:

\begin{utheorem}[Random matrix theory]\label{Twishart}
Let $Q$ be the probability distribution of a continuous random variable
on $PD_n$ (with the Lebesgue $\sigma$-algebra).
\begin{enumerate}[$(1)$]
\item For all $\epsilon \in \{ \pm 1 \}^n$, the transfer probabilities
under $Q_\epsilon^{LPM}, Q_\epsilon^{TPM}$ equal those under $Q$ -- i.e.,
for all events/measurable subsets $\mathscr{A} \subseteq
LPM_n(\epsilon)$, we have
\begin{equation}\label{Echangeofvar}
\bbp_{\epsilon,Q}^{LPM}( {\bf M} = L \D_\epsilon L^T \in \mathscr{A} ) =
\bbp_Q \left( \Phi_{\Id_n} \circ \Phi_{\D_\epsilon}^{-1} ( {\bf M} ) = L
L^T \in \Phi_{\Id_n} \circ \Phi_{\D_\epsilon}^{-1} (\mathscr{A}) \right).
\end{equation}
This uses that at any point in ${\bf L}_n$, the Jacobian of
$\Phi_{\D_\epsilon}(L)$ with respect to $L$ is lower triangular, and the
absolute value of its determinant is $2^n \prod_{j=1}^n l_{jj}^{n+1-j}$.
(See Proposition~\ref{Pjacobian} for details.)

\item Let $Q = W_n(\Sigma,N)$ be any Wishart distribution -- where
$\Sigma \in PD_n$ and $N \geq n \geq 1$. 
Write down the Cholesky decomposition of $\Sigma$ (as above) as
$L_\circ L_\circ^T$, and the Bartlett decomposition of ${\bf M_1}
\sim W_n(\Sigma,N)$ as ${\bf M_1} = L_\circ K_1 K_1^T L_\circ^T$. Then
the {\em signed Bartlett decomposition}, defined via ${\bf M} := L_\circ
K_1 \D_\epsilon K_1^T L_\circ^T$, has the Wishart distribution
$W_{\epsilon,n}^{LPM}(\Sigma,N)$.

\item Similar results hold for $\rev{\bf M} \sim Q_\epsilon^{TPM}$.

\item For all integers $N \geq n \geq 1$ and matrices $\Sigma \in PD_n$,
we have
\begin{equation}\label{Ewishartrev}
{\bf M} \sim W_{\epsilon,n}^{LPM}(\Sigma,N) \qquad \Longleftrightarrow
\qquad \rev{\bf M} \sim W_{\epsilon,n}^{TPM}(\rev{\Sigma}, N).
\end{equation}

\item Suppose $g : \R \to \R$ is a Lebesgue measurable function,
and~\eqref{Ewishartrev} holds. Then $\mathbb{E}[g(\rev{\bf M})] =
\overset{\longleftarrow}{\mathbb{E}}[g({\bf M})]$, if either side exists.
\end{enumerate}
\end{utheorem}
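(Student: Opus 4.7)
The plan is to prove (1) and (2) by change of variables through the diffeomorphism $\Phi_{\D_\epsilon}$ from Theorem~\ref{Tlpm}, and to derive (3)--(5) from the commuting reversal square~\eqref{Erevsq} of Theorem~\ref{Ttpm}. The central new ingredient is the Jacobian formula in (1); the remainder amounts to unwinding definitions.

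For the Jacobian, write $\D_\epsilon = \mathrm{diag}(d_1, \dots, d_n)$ with $d_q \in \{\pm 1\}$, and use row-major coordinates $L_{pq}$ ($p \geq q$) on ${\bf L}_n$ and $M_{ij}$ ($i \geq j$) on the symmetric cone. Since
\[
M_{ij} \;=\; \sum_{k=1}^{j} d_k L_{ik} L_{jk} \qquad (i \geq j),
\]
each partial derivative $\partial M_{ij}/\partial L_{pq}$ carries exactly one factor of $d_q$. Hence the Jacobian matrix of $\Phi_{\D_\epsilon}$ is obtained from that of the classical Cholesky map $\Phi_{\Id_n}$ by scaling each column (indexed by $(p,q)$) by $d_q = \pm 1$. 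The classical Jacobian is lower triangular in row-major ordering, with diagonal entries $2 L_{jj}$ on $M_{jj}$ rows and $L_{jj}$ on $M_{ij}$ rows with $i>j$, giving determinant $2^n \prod_j L_{jj}^{n+1-j}$. Since the column rescalings by $\pm 1$ leave the absolute determinant unchanged, the stated Jacobian formula follows; then~\eqref{Echangeofvar} is immediate, as both sides, rewritten by change of variables onto ${\bf L}_n$, produce the common integrand $f_Q(LL^T) \cdot 2^n \prod_j L_{jj}^{n+1-j}$ on $\Phi_{\D_\epsilon}^{-1}(\mathscr{A})$.

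For (2), take the classical Bartlett decomposition $M_1 = L_\circ K_1 K_1^T L_\circ^T$ and set $L := L_\circ K_1 \in {\bf L}_n$, a product of two lower triangular matrices with positive diagonals. Then $M_1 = L L^T = \Phi_{\Id_n}(L)$ and ${\bf M} = L \D_\epsilon L^T = \Phi_{\D_\epsilon}(L)$, so $\Phi_{\Id_n} \circ \Phi_{\D_\epsilon}^{-1}({\bf M}) = M_1$, and (1) yields ${\bf M} \sim W_{\epsilon,n}^{LPM}(\Sigma, N)$.

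Parts (3)--(5) reduce to Theorem~\ref{Ttpm}. Statement (3) is the TPM analogue of (1)--(2), proved by the same two-step argument applied to $\Phi^{C_\epsilon}$ with $C_\epsilon$ the diagonal factor appearing on the TPM side of~\eqref{Erevsq}. For (4), feed $L = L_\circ K_1$ into the commuting square~\eqref{Erevsq}: the reversal sends ${\bf M} = L \D_\epsilon L^T$ to $\rev{\bf M} = \rev{L}^T \rev{\D_\epsilon} \rev{L} = (\rev{K_1}\rev{L_\circ})^T \rev{\D_\epsilon} (\rev{K_1}\rev{L_\circ})$ using anti-multiplicativity $\rev{AB} = \rev{B}\rev{A}$, precisely the reverse-Bartlett construction with scale $\rev{\Sigma} = \rev{L_\circ}^T \rev{L_\circ}$. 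Finally (5) follows because $\rev{(\cdot)}$ is a linear diffeomorphism of the symmetric matrices realized by a signed permutation of entries (Jacobian determinant $\pm 1$), so substituting $\rev{\bf M}$ in the expectation integral and moving the reversal through produces $\mathbb{E}[g(\rev{\bf M})] = \overset{\longleftarrow}{\mathbb{E}}[g({\bf M})]$ whenever either side exists. The principal obstacle is the Jacobian calculation in (1); the rest is routine bookkeeping, though (3)--(4) require careful matching of the diagonal factor $C_\epsilon$ against $\rev{\D_\epsilon}$ when invoking~\eqref{Erevsq}.
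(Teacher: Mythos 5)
Your proofs of parts (1)--(3) are correct and follow the same route as the paper's: the column-rescaling observation (that the Jacobian of $\Phi_{\D_\epsilon}$ is that of $\Phi_{\Id_n}$ with the column indexed by $(p,q)$ scaled by $d_q = \epsilon_{q-1}\epsilon_q$, so $|\det|$ is unchanged) is a slightly slicker derivation of the Jacobian in Proposition~\ref{Pjacobian} than the paper's direct diagonal-product computation, but the change-of-variables bookkeeping that follows is identical, and (2) and (3) unwind exactly as you say. However, parts~(4) and~(5) contain genuine gaps.

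In part~(4), the entire content of~\eqref{Ewishartrev} reduces -- after applying part~(1) on both the LPM and TPM sides -- to the assertion that ${\bf M_1} \sim W_n(\Sigma,N)$ if and only if $\rev{\bf M_1} = P_n {\bf M_1} P_n \sim W_n(\rev{\Sigma},N)$. You assert this by calling $(\rev{K_1}\rev{L_\circ})^T \D_{\epsrev}(\rev{K_1}\rev{L_\circ})$ ``precisely the reverse-Bartlett construction with scale $\rev{\Sigma}$,'' but that is precisely the thing to be proven, not a citation. To make your route rigorous you would need to verify that $\rev{K_1}$ has the distribution of a reverse-Bartlett factor for $W_n(\Id_n,N)$, i.e.\ that $(\rev{K_1})_{jj}$ has the $\chi$-distribution of the correct degree (which shifts under reversal to $\chi_{N-n+j}$) and that the subdiagonal entries remain i.i.d.\ standard normal. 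That check works out but is omitted. The paper instead proves the Wishart invariance directly by writing out the density $f_{\Sigma,N}$, changing variables by the unimodular map ${\bf M_1} \mapsto \rev{\bf M_1}$, and verifying $\det {\bf M_1} = \det \rev{\bf M_1}$, $\mathrm{tr}(\Sigma^{-1}{\bf M_1}) = \mathrm{tr}(\rev{\Sigma}^{-1}\rev{\bf M_1})$, and $\det\Sigma = \det\rev{\Sigma}$; either approach can be made to work, but yours is currently missing the distributional verification.

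In part~(5), the change-of-variables and Jacobian considerations you mention are the easy half. The essential step -- absent from your sketch -- is the matrix-function identity $g(\rev{\bf M}) = \rev{g({\bf M})}$. This is \emph{not} a consequence of ``moving the reversal through'' the Lebesgue integral: $g$ is applied via functional calculus, so one must observe (as the paper does) that if ${\bf M} = U D U^T$ with $U$ orthogonal and $D$ diagonal, then $\rev{\bf M} = (P_n U) D (P_n U)^T$ is also an orthogonal eigendecomposition, whence $g(\rev{\bf M}) = (P_n U) g(D)(P_n U)^T = P_n g({\bf M}) P_n = \rev{g({\bf M})}$. Without this, the equality $\mathbb{E}[g(\rev{\bf M})] = \rev{\mathbb{E}[g({\bf M})]}$ does not follow.
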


\begin{remark}
Thus, not only does Theorem~\ref{Twishart} provide novel Wishart-type
densities for random matrices in the LPM and TPM cones, we also provide
random variables ``realizing'' these densities via \textit{signed
Bartlett decompositions}. Correspondingly -- via the reversal map -- the
Wishart TPM densities are precisely those for the random variables
\begin{equation}
\rev{\bf M} = (\rev{L_\circ})^T (\rev{K_1})^T \D_{\epsrev} \rev{K_1}
\rev{L_\circ} \, \sim \, W_{\epsilon,n}^{TPM}(\rev{\Sigma},N).
\end{equation}
\end{remark}

\begin{remark}
In addition to showing Theorem~\ref{Twishart}, we introduce a novel (to
our knowledge) family of probability densities on the PD cone, which we
term \textit{Cholesky-normal densities} -- see
Definition~\ref{Dcholeskynormal}. By above, these transfer at once to
density functions on LPM and TPM cones.
\end{remark}

On a closing note, the Cholesky decomposition and Wishart distribution
have tremendous utility in theoretical and applied fields -- always on
the positive definite cone. Above, we have provided not just examples of
$LPM_n(\epsilon)$ matrices, but a complete ``enumeration'' in
Theorem~\ref{Tlpm}. In addition, we have proved a Cholesky-type
decomposition on an open dense set of all real symmetric matrices, each
of which is an algorithmic Riemannian isometry. It is hoped that these
results will open up several fronts of enquiry: numerically,
geometrically, in random matrix theory and probability, in statistical
multivariate analysis (and estimation), and in applications.

\subsection{Organization of the paper}

In Section~\ref{Slpm}, we explain why Theorem~\ref{Tlpm} holds, providing
the algorithmic Cholesky decomposition of $LPM_n$ matrices. We then show
Theorem~\ref{Ttpm} on TPM matrices in Section~\ref{Stpm}.
Next, in Section~\ref{Sriemannian} we discuss how our Cholesky-type
decomposition leads to additional rigid mathematical structure, leading
to the applicability on $LPM_n$ (and $TPM_n$) of Riemannian geometry --
in other words, we prove and elaborate on Theorem~\ref{Triemannian}.

In Section~\ref{Seuclidean}, we show Theorems~\ref{Teuclidean}
and~\ref{Thilbert}: the Cholesky and LPM cones, as well as their
union-closure, are real Hilbert spaces. We also show the complex
analogues in Theorem~\ref{Thermitian}, and then extend some of these
results to other subfields of $\R$ or $\C$ (which do not yield Hilbert
spaces).

Section~\ref{Sbiggroup} is motivated by applying probability inequalities
on LPM spaces. We first prove Theorem~\ref{Tbiggroup} about the abelian
metric group structure on all of $LPM_n$, on the direct limit $LPM_{00}$,
and on $TPM_n$ -- over various subfields of $\C$. We also study
probability results (including Theorem~\ref{Tprob}) on these ``bigger''
cones in Section~\ref{Sprob}. Then in Section~\ref{Sstat} we introduce
the (inverse) Wishart, lognormal, and other distributions supported on
$LPM_n(\epsilon)$, the open dense cone $LPM_n$, and their TPM analogues,
and prove Theorem~\ref{Twishart}. Moreover, we introduce the
Cholesky-normal density on the PD cone itself; and examine how $LPM_n$
matrices with specified inertia partition into sub-cones
$LPM_n(\epsilon)$. This enables defining probability densities on
inertial LPM cones. In addition to the broad goal of developing the
theory further, we also list a few specific future questions.

We conclude with two Appendices. 
In Appendix~\ref{Soperations}, we examine the behavior of
$LPM_n(\epsilon)$ and $TPM_n(\epsilon)$ matrices under Kronecker
products and diagonal concatenation (which we term ``direct sum'').
Appendix~\ref{Sssrpm} begins with option~(1) at the top of
Section~\ref{Sublpm} and studies a related sub-cone of matrices, all of
whose principal $k \times k$ minors have sign $\epsilon_k$.

\section{Algorithmic Cholesky decomposition for LPM matrices}\label{Slpm}

We begin with global notation. Let $[k] := \{ 1, \dots, k \}$ for an
integer $k>0$. Now given a matrix $A_{m \times n}$ and sets $J \subseteq
[m], K \subseteq [n]$, denote by $A_{JK}$ the submatrix of $A$ with rows
and columns indexed by $J,K$ respectively.

The first goal in this section is to show:

\begin{proof}[Proof of Theorem~\ref{Tlpm}]\hfill
\begin{enumerate}[(1)]
\item The sets $LPM_n(\epsilon)$ are clearly pairwise disjoint across all
$\epsilon$, and all nonempty by~\eqref{Elpmdiag}. Now set
$t_{\bf 0} := 1$, and for any $A = A^T \in \R^{n \times n} \setminus
\{ {\bf 0} \}$, set
\[
t_A := \min \{ |\lambda| : \lambda \text{ is a nonzero eigenvalue of some
leading principal submatrix of } A \}.
\]
This works for all $A \neq {\bf 0}$, since if all eigenvalues are zero
then $A={\bf 0}$ by the spectral theorem.

Now fix $k \in [1,n]$ and list the spectrum $\sigma(A_{[k][k]}) =
\{ \lambda_1 \leq \cdots \leq \lambda_k \}$.
The eigenvalues of the linear pencils of ``perturbed leading principal
submatrices'' are
\[
\sigma( A_{[k][k]} + t \, \Id_k ) = \{ \lambda_1 + t \leq \cdots \leq
\lambda_k + t \}, \qquad 0 < t < t_A.
\]
By choice of $t_A$, these eigenvalues satisfy: for every $1 \leq j \leq
k$, the real scalar $\lambda_j + t$ is nonzero, and of a constant sign
across all $0 < t < t_A$. Hence $\det (A_{[k][k]} + t \, \Id_k)$
has sign $\epsilon_k$ independent of $t \in (0,t_A)$. As this is true for
each $k$, we obtain the desired sign pattern $\epsilon = (\epsilon_1,
\dots, \epsilon_n)$. Now $A + t\, \Id_n \in LPM_n(\epsilon)$ for $t \in
(0,t_A)$, and we can set e.g.\ $A_m = A + \frac{t_A}{m+1} \Id_n$ for $m
\geq 1$.

This shows density; moreover, the complement of $LPM_n$ is the union of
the $n$ hypersurfaces in $\mathbb{R}^{n(n+1)/2}$ given by the vanishing
of the $n$ leading principal minors -- i.e., $Z(a_{11}) \cup Z(a_{11}
a_{22} - a_{12}^2) \cup \cdots \cup Z(\det(A))$. As each zero-locus set
is closed, the complement $LPM_n$ is open.

\item For any $1 \leq k \leq n$, the Cauchy--Binet formula yields:
\begin{align}\label{Ecauchybinet}
\det (L B_\epsilon L^T)_{[k] [k]} = &\ \det( L_{[k] [n]} B_\epsilon
(L_{[k] [n]})^T)\\
= &\ \sum_{\substack{J,K \subseteq [n],\\ |J|=|K|=k}} \det(L_{[k]J})
\det(B_\epsilon)_{JK} \det(L_{[k]K})^T
= (\det L_{[k][k]})^2 \det (B_\epsilon)_{[k][k]},\notag
\end{align}
because $L$ is lower triangular so $L_{[k]J}$ is singular unless $J =
[k]$. Now $B_\epsilon \in LPM_n(\epsilon)$, so $L B_\epsilon L^T \in
LPM_n(\epsilon)$.

We next show the bijectivity and smoothness of the reverse map
$\Phi_{B_\epsilon}^{-1}$ by induction on $n$, noting that if
$B_\epsilon$ realizes a sign pattern $\epsilon$, then each of its leading
principal $k \times k$ submatrices also realizes the truncated
sign-subpattern $(\epsilon_1, \dots, \epsilon_k)$, for all $1 \leq k \leq
n$.

Begin with the base case $n=2$.
Fix $B_\epsilon = \begin{pmatrix} m & u \\ u & v \end{pmatrix} \in
LPM_2(\epsilon)$ -- so $m$ has sign $\epsilon_1$ and $mv-u^2$ has
sign $\epsilon_2$. Now given $A = \begin{pmatrix} a & b \\ b & c
\end{pmatrix} \in LPM_2(\epsilon)$, we need to solve
\begin{equation}\label{E2x2}
\begin{pmatrix} a & b \\ b & c \end{pmatrix} =
\begin{pmatrix} l & 0 \\ p & q \end{pmatrix}
\begin{pmatrix} m & u \\ u & v \end{pmatrix}
\begin{pmatrix} l & p \\ 0 & q \end{pmatrix} =
\begin{pmatrix}
m l^2 & l (m p + u q) \\
l (m p + u q) & m p^2 + v q^2 + 2 u p q
\end{pmatrix}
\end{equation}
for $l,q>0$ and $p$. Clearly, $l = \sqrt{a/m}$, whence
\[
ac-b^2 = l^2 q^2 (mv - u^2) = a q^2 (mv-u^2)/m \quad \implies \quad
q = \sqrt{ \frac{ac-b^2}{a} \cdot \frac{m}{mv-u^2} }.
\]
(Note that the expression inside the square root is positive, since
$B_\epsilon,A \in LPM_2(\epsilon)$.) Both $l,q$ are positive by
assumption. From them, we can solve for $p$ as well, to collectively
obtain:
\begin{equation}\label{E2x2q}
l = \sqrt{a/m}, \qquad
q = \sqrt{ \frac{ac-b^2}{a} \cdot \frac{m}{mv-u^2} }, \qquad
p = \frac{b}{lm} - \frac{uq}{m}.
\end{equation}
As $l,p,q$ are uniquely found from $A$ (upon fixing $B_\epsilon$),
$L \longleftrightarrow A = L \begin{pmatrix} m & u \\ u & v \end{pmatrix}
L^T$ is a bijection.

Since $L \mapsto L \begin{pmatrix} m & u \\ u & v \end{pmatrix} L^T$ is a
smooth map, it remains to show that $A \mapsto (l,p,q)$ is also smooth.
First, $A \mapsto a/m$ is a smooth and positive function on
$LPM_2(\epsilon)$, so the map $A \mapsto l$ is smooth. Since
$(ac-b^2)/(mv-u^2) > 0$ on $LPM_2(\epsilon)$, $A \mapsto (l,q)$ is also
smooth. Finally, the formula for $p$ and the preceding calculations show
that $A \mapsto (l,p,q)$ (or $L$) is a smooth map.\medskip

We next show the induction step. Suppose $LPM_n(\epsilon)$ is
nonempty for some $n \geq 3$ and some $\epsilon \in \{ \pm 1 \}^n$. Fix a
matrix in it, say $B_\epsilon$, which we write in block form as
$B_\epsilon = \begin{pmatrix} M & {\bf u} \\ {\bf u}^T & v \end{pmatrix}$,
with the matrix $M \in LPM_{n-1}((\epsilon_1, \dots, \epsilon_{n-1}))$
and the column ${\bf u} \in \R^{n-1}$.

We now write the system to be solved as $A' = L' B_\epsilon (L')^T$.
Letting $L' = \begin{pmatrix} L & {\bf 0}_{n-1} \\ {\bf p}^T & q
\end{pmatrix}$ with $L \in \R^{(n-1) \times (n-1)}$ lower triangular,
${\bf p} \in \R^{n-1}$, and $q \in (0,\infty)$, we have
\begin{align*}
A' = \begin{pmatrix} A & {\bf b} \\ {\bf b}^T & c \end{pmatrix} = &\
\begin{pmatrix} L & {\bf 0} \\ {\bf p}^T & q \end{pmatrix}
\begin{pmatrix} M & {\bf u} \\ {\bf u}^T & v \end{pmatrix}
\begin{pmatrix} L^T & {\bf p} \\ {\bf 0} & q \end{pmatrix}\\
= &\ \begin{pmatrix}
L M L^T & L (M {\bf p} + q {\bf u}) \\
({\bf p}^T M + q {\bf u}^T) L^T & {\bf p}^T M {\bf p} + v q^2 + 2 q {\bf
u}^T {\bf p}
\end{pmatrix},
\end{align*}
where $A'$ is any element of $LPM_n(\epsilon)$.

By the induction hypothesis, one can solve $L M L^T = A$ smoothly (and
uniquely) in $L$. That is, $A' \to A \to L$ is a smooth map. Now equating
the $(1,2)$ or $(2,1)$ blocks,
\begin{equation}\label{Ep}
{\bf p} = M^{-1} (L^{-1} {\bf b} - q {\bf u}),
\end{equation}
and substituting this into the $(2,2)$ block yields:
\begin{align*}
c = &\ (L^{-1} {\bf b} - q {\bf u})^T M^{-1} (L^{-1} {\bf b} - q {\bf u})
+ 2 q {\bf u}^T M^{-1} (L^{-1} {\bf b} - q {\bf u}) + v q^2\\
= &\ \left( {\bf
b}^T A^{-1} {\bf b} - 2 q {\bf u}^T M^{-1} L^{-1} {\bf b} + q^2 {\bf u}^T
M^{-1} {\bf u} \right) + \left( 2 q {\bf u}^T M^{-1} L^{-1} {\bf b} - 2
q^2 {\bf u}^T M^{-1} {\bf u} \right) + v q^2.
\end{align*}
Canceling and regrouping,
\[
q^2 = \frac{c - {\bf b}^T A^{-1} {\bf b}}{v - {\bf u}^T M^{-1} {\bf u}}.
\]

But the theory of Schur complements says that $\det \begin{pmatrix} A
& {\bf b} \\ {\bf b}^T & c \end{pmatrix} = (\det A) (c - {\bf b}^T A^{-1}
{\bf b})$ whenever $A$ is invertible. Thus
\begin{equation}\label{Eqsquare}
q^2 = \frac{\det \begin{pmatrix} A & {\bf b} \\ {\bf b}^T  & c
\end{pmatrix}}{\det A} \cdot \frac{\det M}{\det \begin{pmatrix} M & {\bf
u} \\ {\bf u}^T & v \end{pmatrix}},
\end{equation}
where all scalars are nonzero and the right-hand side is positive by the
$LPM_n(\epsilon)$ hypothesis. As $\det A$ is uniformly positive or
uniformly negative, $A' \mapsto q^2$ is a smooth positive function of the
given data. As $q>0$, we obtain it uniquely -- and smoothly -- from $A'$,
by taking the positive square root of~\eqref{Eqsquare} (compare
with~\eqref{E2x2q}).

Finally $A' \mapsto A \mapsto L$ is smooth, so $\det(L)>0$ and hence
$L^{-1}$ are also smooth in $A$. Hence ${\bf p}$ is also smooth in $A'$,
via~\eqref{Ep}. Thus $A' \mapsto L'$ is a smooth bijection; since $L'
\mapsto L' B_\epsilon (L')^T$ is also smooth, we obtain the desired
diffeomorphism.
For the last line: continuous functions are Borel/Lebesgue measurable,
and $\Phi_{B_\epsilon}$ is a homeomorphism.\qedhere
\end{enumerate}
\end{proof}

The proof reveals that these general Cholesky-type decompositions are
algorithmic, which will have ramifications in downstream theory and
applications. For completeness, we write down
Algorithm~\ref{AlgCholesky} -- which is implicit in the proof above --
but using conjugate transposes in order to incorporate the case
discussed below, of entries in $\C$ or other subfields of $\C$.

\begin{algorithm}\caption{LPM-Cholesky}\label{AlgCholesky}
\begin{algorithmic}[1]
\State \textbf{Input:} integer $n \geq 1$, sign pattern $\epsilon \in \{ \pm 1 \}^n$, matrix $B_\epsilon \in LPM^\C_n(\epsilon)$.
\State \textbf{Input:} matrix $A \in LPM^\C_n(\epsilon)$, to be Cholesky-factored.
\State \textit{Return:} $l_{11} = \sqrt{a_{11} / (B_\epsilon)_{11}}$.
\For {$j=2,\ldots,n$}
	\State Record the smaller lower triangular matrix obtained: $L_{j-1} \in \C^{(j-1) \times (j-1)}$.
	\For {$i=1,\ldots,j-1$}
		\State \textit{Return:} $l_{ij} = 0$.
	\EndFor \ (At this stage, only the final row of $L_j$ remains to be computed.)
	\State Write $A_{[j][j]} = \begin{pmatrix} A' & {\bf b} \\ {\bf b}^* & c \end{pmatrix}$ and $(B_\epsilon)_{[j][j]} = \begin{pmatrix} M' & {\bf u} \\ {\bf u}^* & v \end{pmatrix}$.
	\State \textit{Return:} $l_{jj} = \sqrt{ \frac{\det A_{[j][j]}}{\det A'} \cdot \frac{\det M'}{\det (B_\epsilon)_{[j][j]}} }$.
	\State Compute ${\bf p} = (M')^{-1}(L_{j-1}^{-1} {\bf b} - l_{jj} {\bf u})$.
	\State \textit{Return:} $(l_{j1}, \dots, l_{j,j-1}) = {\bf p}^T$.
\EndFor
\end{algorithmic} 
\end{algorithm}

Having factored the matrices in $LPM_n = \bigsqcup_{\epsilon \in \{ \pm 1
\}^n} LPM_n(\epsilon)$, it is natural to ask if this factorization
extends to the closure, which is all real symmetric matrices. This would
generalize extending the Cholesky factorization from $PD_n$ to positive
semidefinite matrices. Unfortunately, this extension to
$\overline{LPM_n}$ fails even in the $n=2$ case:

\begin{example}\label{Exclosure}
Suppose $n=2$ and $A = \begin{pmatrix} 0 & b \\ b & c \end{pmatrix}$,
with $b \neq 0$. If one naively tries to use~\eqref{E2x2} to factorize
$A$, then $ml^2 = a = 0$ yields $l=0$, in which case equating the $(1,2)$
entries yields $b=l(mp+uq)=0$, which is false.

We also try to emulate the usual Cholesky strategy for factoring a
positive semidefinite matrix, wherein one perturbs this matrix by a
multiple of the identity, say $t\, \Id_2$. Here $|t|>0$ is small enough
such that $A(t') := \begin{pmatrix} t' & b \\ b & c+t' \end{pmatrix}$ has
negative determinant for $t'$ between $0$ and $t$.
Then $A(t/k) \in LPM_2(\frac{t}{|t|}, -1)$ for all $k \geq 1$.

Now we factor $A(t/k)$ for each $k$ using~\eqref{E2x2q}, as $L_k
A_\epsilon L_k^T$, where $A_\epsilon = \begin{pmatrix} t/|t| & 0 \\ 0 &
-t/|t| \end{pmatrix} \in LPM_2(\frac{t}{|t|}, -1)$. The $(2,2)$ entry of
$L_k$ is
\[
q_k = \sqrt{ \frac{b^2-t(ck+t)/k^2}{|t/k|} } = \sqrt{\frac{b^2 k}{|t|} -
\frac{t(ck+t)}{k|t|} },
\]
and this grows as $O(\sqrt{k})$ as $k \to \infty$, since $b \neq 0$. Thus
the matrices $L_k$ do not have a subsequence of convergent matrices, and
this approach also fails to factorize $A$ in the desired manner. \qed
\end{example}

\begin{remark}
Example~\ref{Exclosure} reveals an important distinction between the
positive/negative definite cones and other $LPM_n$ cones: the proof of
Cholesky factorization for \textit{singular} positive semidefinite
matrices fails to go through in the latter. One key step in this proof
that does not go through is that if $A \in \overline{PD_n} \setminus
PD_n$ and $A_k = A + \frac{1}{k} \Id_n$ is Cholesky-factored as $A_k
= L_k L_k^T$, then the operator norms of all $\| L_k \|$ are uniformly
bounded because of the strong property that in the $C^*$-algebra
$\mathcal{B}(\R^n)$, $\| L_k L_k^T \| = \| L_k \|^2$. This is
used to upper bound all $\| L_k \|$ and show that $L_k$ admits a
convergent subsequence (in the operator norm, hence entrywise).

As the calculation in a non positive/negative definite cone
$LPM_n(\epsilon)$ would involve $\| L_k B_\epsilon L_k^T \|$, the
sequence $L_k$ need not remain bounded, as is the case in
Example~\ref{Exclosure}.
\end{remark}

As we will see in Section~\ref{Sriemannian}, another approach does not
work either: trying to approximate all matrices in $\overline{LPM_n}$ --
e.g.\ ${\bf 0}_{n \times n}$ -- by a sequence $A_m$ of $LPM_n(\epsilon)$
matrices, along a Riemannian geodesic in the manifold $LPM_n(\epsilon)$.
The point is that all $LPM_n(\epsilon)$ are complete, and any such
sequence $A_m$ will be unbounded in the Riemannian metric, so this
approach cannot work.

\section{TPM matrices and their reverse-Cholesky
factorization}\label{Stpm}

In this short section, we explain Theorem~\ref{Ttpm} via some additional
structure. The key observation is that there exist both \textit{linear}
and \textit{nonlinear} diffeomorphisms between $LPM_n$ and $TPM_n$:

\begin{prop}\label{Ptpm}
Given $n \geq 1$ and a sign pattern $\epsilon \in \{ \pm 1 \}^n$, define
its ``reversal'' to be
\[
\epsrev := (\epsilon_n \epsilon_{n-1}, \epsilon_n \epsilon_{n-2}, \dots,
\epsilon_n \epsilon_1, \epsilon_n), \qquad \forall n \geq 2,
\]
and $\epsrev := (\epsilon_1)$ if $n=1$.
Now let $P_n$ be as in Theorem~\ref{Ttpm}. Then the maps
\begin{equation}\label{Ediffeos}
A \mapsto P_n A P_n \qquad \text{and} \qquad A \mapsto A^{-1}
\end{equation}
commute, and are linear and nonlinear smooth diffeomorphisms of order-2
between $TPM_n \longleftrightarrow LPM_n$. The first sends
$LPM_n(\epsilon) \longleftrightarrow TPM_n(\epsilon)$ and the second
sends $LPM_n(\epsilon) \longleftrightarrow TPM_n(\epsrev)$.
\end{prop}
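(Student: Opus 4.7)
The plan is to verify three pieces in turn: (a) the algebraic/topological properties (involution, commutation, smoothness) of the two maps $\sigma : A \mapsto P_n A P_n$ and $\iota : A \mapsto A^{-1}$; (b) that $\sigma$ sends $LPM_n(\epsilon)$ bijectively onto $TPM_n(\epsilon)$; and (c) that $\iota$ sends $LPM_n(\epsilon)$ bijectively onto $TPM_n(\epsrev)$. For (a), $P_n^2 = \Id_n$ makes $\sigma$ a linear order-two diffeomorphism of $\R^{n \times n}$, while $\iota$ is a smooth order-two self-map on the open locus of invertible matrices. The two commute because $P_n^{-1} = P_n$, so $(P_n A P_n)^{-1} = P_n A^{-1} P_n$. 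Consequently it suffices to check each map is a well-defined set map with the claimed image; the involution property upgrades this automatically to a bijection, and with the ambient smoothness to a diffeomorphism.

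For (b), I would compute $(P_n A P_n)_{ij} = A_{n+1-i,\, n+1-j}$ directly. The trailing $k \times k$ submatrix of $P_n A P_n$ (with rows and columns indexed by $\{n-k+1,\ldots,n\}$) is then the leading $k \times k$ submatrix of $A$ with both rows and columns reversed; equivalently, it is $P_k A_{[k],[k]} P_k$. Its determinant is $\det(P_k)^2 \det(A_{[k],[k]}) = \det(A_{[k],[k]})$. Hence the trailing $k \times k$ minor of $\sigma(A)$ equals the leading $k \times k$ minor of $A$ for every $k$, so sign patterns transfer verbatim between $LPM_n(\epsilon)$ and $TPM_n(\epsilon)$.

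For (c), the key tool is Jacobi's complementary-minors identity: for invertible $A$ and $|I| = |J|$,
\[
\det\bigl((A^{-1})_{I,J}\bigr) \;=\; \frac{(-1)^{s_I + s_J}}{\det A}\, \det\bigl(A_{J^c,\, I^c}\bigr), \qquad s_I := \sum_{i \in I} i.
\]
Specializing to $I = J = \{n-k+1,\ldots,n\}$, the exponent $s_I + s_J$ is even and $I^c = J^c = \{1,\ldots,n-k\}$, so the trailing $k \times k$ minor of $A^{-1}$ equals the leading $(n-k) \times (n-k)$ minor of $A$ divided by $\det A$ (with the empty $0 \times 0$ minor read as $1$ when $k = n$). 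For $A \in LPM_n(\epsilon)$ the result has sign $\epsilon_{n-k}/\epsilon_n = \epsilon_n \epsilon_{n-k}$, and sweeping $k$ over $1,\ldots,n$ (with $\epsilon_0 := 1$) reproduces exactly the pattern $\epsrev$ in the statement. The main anticipated obstacle is sign bookkeeping in Jacobi's identity, but the symmetric choice $I = J$ kills the $(-1)^{s_I + s_J}$ factor identically, leaving only the straightforward sign product $\epsilon_n \epsilon_{n-k}$ to be read off.
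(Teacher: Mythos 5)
Your approach is essentially the paper's: $P_n$-conjugation swaps leading with trailing minors while preserving their signs, and Jacobi's complementary-minors identity handles $A \mapsto A^{-1}$ (you set $I=J$ equal to the trailing $k$-set where the paper sets $K=J=[k]$ leading; reindexing $k \leftrightarrow n-k$ identifies the two computations, and the $\epsilon_0 := 1$ convention makes the boundary case $k=n$ line up correctly). One caveat on wording: a one-sided inclusion $\iota(LPM_n(\epsilon)) \subseteq TPM_n(\epsrev)$ together with ``$\iota$ is an involution'' does not by itself force a bijection onto the stated target -- what actually does the work is that Jacobi's identity expresses each trailing $k \times k$ minor of $A^{-1}$ as a ratio of leading minors of $A$, so the sign correspondence is a genuine equivalence $A \in LPM_n(\epsilon) \Longleftrightarrow A^{-1} \in TPM_n(\epsrev)$; the paper makes exactly this point by remarking that ``this entire process is reversible,'' and you should state it explicitly rather than leaving it implicit.
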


\begin{remark}\label{Rsanity}
As a sanity check, note that $\overset{\leftarrow}{\epsrev} = \epsilon$
for all $n$ and $\epsilon$. In particular, if one fixes a matrix
$B_\epsilon \in LPM_n(\epsilon)$, by Proposition~\ref{Ptpm} the map
\begin{equation}\label{Esanity}
A \quad \mapsto \quad A^{-1} \quad \mapsto \quad P_n A^{-1} P_n = (P_n A
P_n)^{-1}
\end{equation}
is a diffeomorphism $: LPM_n(\epsilon) \to LPM_n(\epsrev)$ (where one
uses $P_n B_\epsilon^{-1} P_n$ in Cholesky-decomposing the cone
$LPM_n(\epsrev)$).
Similarly,~\eqref{Esanity} is a diffeomorphism $: TPM_n(\epsilon) \to
TPM_n(\epsrev)$ once one fixes $B_\epsilon \in TPM_n(\epsilon)$ -- again
for every $n \geq 1 $ and $\epsilon \in \{ \pm 1 \}^n$. This provides
another choice for $\epsilon' = \epsrev$, alongside~\eqref{Ecomposition}.
In particular,
\begin{equation}
\rev{\D_\epsilon} = P_n \D_\epsilon P_n = \D_{\epsrev}.
\end{equation}
\end{remark}

\begin{proof}[Proof of Proposition~\ref{Ptpm}]
Since $P_n A P_n$ reverses the rows and columns of a square matrix, it
interchanges the leading and trailing $k \times k$ principal minors for
every $k$. As linear bijections are indeed (smooth) diffeomorphisms, half
of the result is proved. Moreover, the maps in~\eqref{Ediffeos} commute.

For the inverse map, recall Jacobi's complementary minor
formula~\cite{Jacobi}: given integers $0 < p < n$, an invertible matrix
$A_{n \times n}$, and equi-sized subsets $J = \{ j_1 < \cdots < j_p \}, K
= \{ k_1 < \cdots < k_p \} \subseteq [n]$,
\begin{equation}\label{Ejacobi}
\det A \cdot \det (A^{-1})_{K^cJ^c} = (-1)^{j_1 + k_1 + \cdots + j_p +
k_p} \det A_{JK}.
\end{equation}
Here $J^c := [n] \setminus J$, and similarly for $K^c$. Now
apply~\eqref{Ejacobi} to $K = J = [k]$ for some $0<k<n$. If $A \in
LPM_n(\epsilon)$, then the trailing principal minors of $A^{-1}$ are:
\[
\det (A^{-1})_{[k]^c[k]^c} = \frac{\det A_{[k][k]}}{\det A},
\]
and this has sign $\epsilon_n \epsilon_k$ if $0 < k < n$. For $k=n$,
$\det A^{-1} = \frac{1}{\det A}$ has sign $\epsilon_n$. Moreover, this
entire process is reversible, so that one could have started with $A \in
TPM_n(\epsrev)$ and used $K = J = [k]^c$ instead. Finally, as $LPM_n \cup
TPM_n \subset GL_n(\R)$, the determinant map is nonzero-valued and hence
its reciprocal is smooth, whence so is the inverse map by Cramer's rule.
\end{proof}

Proposition~\ref{Ptpm} immediately implies

\begin{proof}[Proof of Theorem~\ref{Ttpm}]
This is now clear using either map from~\eqref{Ediffeos}. For
completeness, we record the calculations, fixing matrices $B_\epsilon \in
LPM_n(\epsilon)$ and $B_{\epsrev} \in LPM_n(\epsrev)$. Now given any $L
\in {\bf L}_n$,
\begin{align}
A = L B_\epsilon L^T &\ \in LPM_n(\epsilon) \ \implies \ 
P_n A P_n  = (P_n L P_n) \cdot P_n
B_\epsilon P_n \cdot (P_n L P_n)^T \in TPM_n(\epsilon),\\
A  = L B_{\epsrev} L^T &\ \in LPM_n(\epsrev) \ \implies \ A^{-1} =
L^{-T} \cdot B_{\epsrev}^{-1} \cdot L^{-1} \in TPM_n(\epsilon),
\end{align}
where the final step in each line follows from Proposition~\ref{Ptpm}:
$P_n B_\epsilon P_n$ and $B_{\epsrev}^{-1}$ lie in $TPM_n(\epsilon)$.
The final assertions involving the reversal map and the commuting diagram
are easily verified.
\end{proof}

\begin{remark}
In this proof and henceforth, we denote $L^{-T} := (L^{-1})^T$ for an
invertible matrix $L$.
\end{remark}

\section{Riemannian geometry of LPM matrices and group structure of each
$LPM_n(\epsilon)$}\label{Sriemannian}

Thus far, we have obtained a topological, even smooth structure of each
cone $LPM_n(\epsilon)$. We now study how our Cholesky decomposition on
$LPM_n(\epsilon)$ leads to a Riemannian-geometric and Lie-theoretic
structure.

We begin by addressing a natural question: akin to $PD_n$, every
$LPM_n(\epsilon)$ is a cone -- which is moreover open, in light of the
diffeomorphisms onto Cholesky space ${\bf L}_n$ or an open Euclidean
ball. But are these cones always convex? This turns out to be not always
true:

\begin{example}\label{Exnotconvex}
Given $\epsilon_1 \in \{ \pm 1 \}$, scalars $a,c \neq 0$ of sign
$\epsilon_1$, and $b > \sqrt{ac}$, the matrices $A_\pm := \begin{pmatrix}
a & \pm b \\ \pm b & c \end{pmatrix} \in LPM_2((\epsilon_1,-1))$.
However, $A_+ + A_-$ has a positive $2 \times 2$ determinant.

In particular, $LPM_n((\epsilon_1,-1,\epsilon_3,\dots,\epsilon_n))$ is
not convex either, for any $n \geq 2$ and signs $\epsilon_1$ and
$\epsilon_3, \dots, \epsilon_n \in \{ \pm 1 \}$. Indeed, one can simply
block-adjoin ${\rm diag}(-\epsilon_3, \epsilon_3 \epsilon_4, \dots,
\epsilon_{n-1} \epsilon_n)$ to $A_\pm$. \qed
\end{example}

Given this lack of convexity, not all $LPM_n(\epsilon)$ contain every
Euclidean geodesic, i.e.\ line segment. However, it turns out that they
all admit Riemannian manifold structures, which we now describe in
detail. The following treatment is along the lines of~\cite{Cholesky}.

\subsection{Riemannian metric on Cholesky space}

In~\cite[Section~3]{Cholesky}, Lin first develops a Riemannian structure
on Cholesky space ${\bf L}_n$, which he terms the \textit{log-Cholesky
metric}. Here is a summary.

Let $\LL_n$ denote the space of real lower triangular $n \times n$
matrices. (In Lin's notation, $\LL_n \leftrightarrow \mathcal{L}$ and
${\bf L}_n \leftrightarrow \mathcal{L}_+$.) Then $\LL_n \cong
\R^{n(n+1)/2}$ is a flat space and ${\bf L}_n$ is a convex subset that is
a smooth submanifold. The tangent space at every $L \in {\bf L}_n$ can be
identified with $\LL_n$. We now describe the Riemannian manifold
structure of ${\bf L}_n$.

\begin{theorem}[{\cite[Section~3.1]{Cholesky}}]\label{TLnMetric}\hfill
\begin{enumerate}[$(1)$]
\item The symmetric maps $\{ \widetilde{g}_L : T_L({\bf L}_n)^2 \to \R
\}_{L \in {\bf L}_n}$ given by
\begin{equation}
\widetilde{g}_L(X,Y) := \sum_{i>j} x_{ij} y_{ij} + \sum_{j=1}^n x_{jj}
y_{jj} l_{jj}^{-2}, \qquad X = (x_{ij}), Y = (y_{ij}) \in \LL_n
\end{equation}
provide a Riemannian metric on ${\bf L}_n$, where $l_{jj}$ are the
diagonal entries of $L$.

\item Given a square matrix $L \in \R^{n \times n}$, define its
\emph{diagonal part} and \emph{strictly lower triangular part},
respectively, as:
\begin{equation}
\D(L) := {\rm diag}(l_{11}, \dots, l_{nn}), \qquad \floo{L}_{ij} :=
\begin{cases}
l_{ij}, \quad &\text{if } i>j,\\
0, &\text{otherwise}.
\end{cases}
\end{equation}
Now the geodesic in ${\bf L}_n$ starting at $L$ in the direction $X \in
T_L {\bf L}_n$ is given by
\begin{equation}\label{Egeodesic}
\widetilde{\gamma}_{L,X}(t) := \floo{L} + t \floo{X} + \D(L) \exp( t
\D(X) \D(L)^{-1}), \qquad t \in \R.
\end{equation}
In particular, the geodesic in ${\bf L}_n$ from $L$ to $K$ in unit time is
\begin{equation}
\widetilde{\gamma}_{L,X}(t), \quad \text{with} \quad
X = \floo{K} - \floo{L} + \{ \log \D(K) - \log \D(L) \} \D(L).
\end{equation}

\item The {\em log-Cholesky metric} on ${\bf L}_n$ -- i.e.\ the induced
Riemannian distance between two points $L = (l_{ij})$ and $K = (k_{ij})$
-- is given by:
\begin{equation}\label{ElogCholesky}
d_{{\bf L}_n}(L,K) = \left( \sum_{i>j} (l_{ij} - k_{ij})^2 + \sum_{j=1}^n
(\log l_{jj} - \log k_{jj})^2 \right)^{1/2}.
\end{equation}
\end{enumerate}
\end{theorem}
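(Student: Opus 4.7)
The plan is to exhibit a single global coordinate chart on ${\bf L}_n$ in which $\widetilde{g}$ becomes the standard Euclidean metric, after which parts (1)--(3) all reduce to elementary Euclidean geometry. For (1), symmetry of $\widetilde{g}_L$ is immediate from the formula, and positive-definiteness follows from
\[
\widetilde{g}_L(X,X) = \sum_{i>j} x_{ij}^2 + \sum_{j=1}^n (x_{jj}/l_{jj})^2,
\]
which vanishes iff $X=0$; smoothness of $L \mapsto \widetilde{g}_L$ holds because the coefficients $l_{jj}^{-2}$ are smooth positive functions on ${\bf L}_n$.

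Next, I introduce the global chart
\[
\Psi : {\bf L}_n \longrightarrow \R^{n(n+1)/2}, \qquad L \longmapsto \bigl( (l_{ij})_{i>j};\ (\log l_{jj})_{j=1}^n \bigr),
\]
a smooth diffeomorphism with smooth inverse (the diagonal entries are recovered via $l_{jj} = \exp u_j$). Writing $u_j := \log l_{jj}$, the identity $dl_{jj} = l_{jj}\, du_j$ gives $l_{jj}^{-2}\, dl_{jj}^2 = du_j^2$, so
\[
\Psi_* \widetilde{g} \;=\; \sum_{i>j} dl_{ij}^2 + \sum_{j=1}^n du_j^2
\]
is the flat Euclidean metric on $\R^{n(n+1)/2}$. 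Thus $({\bf L}_n, \widetilde{g})$ is \emph{globally} isometric to Euclidean $\R^{n(n+1)/2}$; in particular it is flat and geodesically complete.

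For (2), Euclidean geodesics are straight lines, so the geodesic through $L$ with initial velocity $X \in \LL_n$ is the $\Psi$-preimage of
\[
t \longmapsto \bigl( (l_{ij} + t x_{ij})_{i>j};\ (\log l_{jj} + t x_{jj}/l_{jj})_{j=1}^n \bigr),
\]
where the rate $x_{jj}/l_{jj}$ in the $u_j$-coordinate matches the initial velocity pushforward $\Psi_*(X)$. Pulling back via $\Psi^{-1}$ (and using that $\exp$ acts entrywise on diagonal matrices) yields~\eqref{Egeodesic}. The endpoint condition $\widetilde{\gamma}_{L,X}(1) = K$ then forces $\floo{X} = \floo{K} - \floo{L}$ and $\D(X) = (\log \D(K) - \log \D(L))\,\D(L)$, as claimed. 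For (3), the induced Riemannian distance equals the Euclidean distance in $\Psi$-coordinates, giving exactly~\eqref{ElogCholesky}.

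The only real subtlety is that the curve~\eqref{Egeodesic} is a \emph{length-minimizing} geodesic, not merely a critical point of the energy functional; but this is automatic because $\Psi$ is a global isometry onto simply-connected complete Euclidean space, where straight lines are the unique minimizers. Consequently no Christoffel-symbol calculation is required: the single change of variables $u_j = \log l_{jj}$ flattens the metric once and for all, and the remaining content of the theorem is classical Euclidean geometry.
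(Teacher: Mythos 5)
Your proof is correct. Note, however, that the paper does not prove this theorem at all: it is stated with the citation to Lin's paper~\cite{Cholesky}, Section~3.1, and quoted as a known black box before the authors build on it. So there is no ``paper's proof'' to compare against line by line.

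What is interesting is that your argument is, in substance, the flattening isometry that the present paper introduces \emph{later} as its own contribution in Theorem~\ref{Tinnerproduct} and Theorem~\ref{Teuclidean}: the map $\eta(L) = (\log l_{11}, \dots, \log l_{nn}; \{l_{ij}: i>j\})$ is exactly your chart $\Psi$ up to a reordering of coordinates, and the paper even observes in a remark that Lin's assertion that ${\bf L}_n$, $PD_n$ are not Riemannian submanifolds of Euclidean space is incorrect --- they are \emph{all of} $\R^{n(n+1)/2}$. You have thus reversed the logical order: rather than first citing Lin's geodesic and distance formulas (which Lin derived by computing the exponential map of a bi-invariant metric on the abelian Lie group $({\bf L}_n, \circledcirc)$) and then observing that the space is flat, you first change coordinates to see that the metric is globally the standard Euclidean one, and then read off the geodesics~\eqref{Egeodesic} and distance~\eqref{ElogCholesky} as trivialities. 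Your route is shorter and avoids any Christoffel-symbol or Lie-theoretic machinery. The one sentence in your write-up that deserves a tiny bit more care is the ``pushforward of the initial velocity'' step: it is worth saying explicitly that you are identifying $T_L {\bf L}_n$ with $\LL_n$ via the coordinate basis $(l_{ij})_{i \geq j}$, so that $d\Psi_L$ is the block-diagonal map $X \mapsto \bigl((x_{ij})_{i>j}, (x_{jj}/l_{jj})_j\bigr)$; once that identification is written down the rest is purely Euclidean, as you say.
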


\subsection{Riemannian metrics on LPM and TPM matrices}

We now transport the above Riemannian metric onto each cone
$LPM_n(\epsilon)$. By Theorem~\ref{Tlpm}, each of these cones is smoothly
diffeomorphic to $PD_n$, which is a smooth submanifold of the (flat)
vector space $\symm_n$ of real symmetric matrices. Moreover, it is easy
to check that one can perturb $A \in LPM_n(\epsilon)$ by $tX$, for any
real symmetric matrix $X$ and sufficiently small $|t|$, and yet stay in
$LPM_n(\epsilon)$. Thus, the tangent space is $T_A(LPM_n(\epsilon)) \cong
\symm_n$.

In the sequel, we will Cholesky-factorize matrices in $LPM_n(\epsilon)$
using not an arbitrary matrix $B_\epsilon$, but a specific choice: the
diagonal matrix $B_\epsilon = \D_\epsilon$ as in~\eqref{Elpmdiag}.
Fixing $\epsilon$ and $\D_\epsilon$, $\Phi_{\D_\epsilon}$ is a smooth
diffeomorphism by Theorem~\ref{Tlpm}. (In the special case $PD_n$ with
$\epsilon = {\bf 1}_n$, the maps $\Phi_{\D_\epsilon},
\Phi_{\D_\epsilon}^{-1}$ were respectively called $\mathscr{S},
\mathscr{L}$ by Lin in~\cite{Cholesky}.) Now we generalize the analysis
in~\cite{Cholesky} to all $\epsilon$:

\begin{theorem}\label{TLPMnMetric}
Fix $n \geq 1$, a sign pattern $\epsilon \in \{ \pm 1 \}^n$, and the
matrix $B_\epsilon = \D_\epsilon$. Denote $\Phi :=
\Phi_{\D_\epsilon}$.
\begin{enumerate}[$(1)$]
\item Given $A \in \R^{n \times n}$, define $\half{A} := \floo{A} +
\frac{1}{2} \D(A)$. Then for all $L \in {\bf L}_n$, the differential of
$\Phi$ at $L$ is
\begin{equation}\label{Edifferential}
D_L \Phi : T_L({\bf L}_n) \to T_{L \D_\epsilon L^T} (LPM_n(\epsilon)),
\quad \text{given by} \quad (D_L \Phi)(X) = L \D_\epsilon X^T + X
\D_\epsilon L^T.
\end{equation}
This is a linear isomorphism $: \LL_n \to \symm_n$ between the tangent
spaces, with inverse map given by:
\begin{equation}\label{Einverse}
(D_L \Phi)^{-1}(W) = L \half{(L^{-1} W L^{-T})} \D_\epsilon, \qquad
\forall W \in \symm_n.
\end{equation}

\item $LPM_n(\epsilon)$ is a Riemannian manifold under the {\em
log-Cholesky metric}:
\begin{equation}
g_{L \D_\epsilon L^T}(W,V) = \widetilde{g}_L \left( L \half{(L^{-1} W
L^{-T})} \D_\epsilon, L \half{(L^{-1} V L^{-T})} \D_\epsilon \right),
\quad \forall L \in {\bf L}_n, \ W,V \in \symm_n.
\end{equation}

\item The diffeomorphism $\Phi = \Phi_{\D_\epsilon} : ({\bf L}_n,
\widetilde{g}) \to (LPM_n(\epsilon), g)$ is in fact a Riemannian
isometry. Thus $D \Phi_{\D_\epsilon}$ transfers from ${\bf L}_n$ the
Riemannian metric, geodesic, and distance to $LPM_n(\epsilon)$:
\begin{align}
\widetilde{g}_L(X,Y) = &\ g_{\Phi(L)}((D_L \Phi)(X), (D_L \Phi)(Y)),\\
\gamma_{A,W}(t) = &\ \Phi(\widetilde{\gamma}_{\Phi^{-1}(A), X}(t)) =
\widetilde{\gamma}_{\Phi^{-1}(A), X}(t) \cdot \D_\epsilon \cdot
\widetilde{\gamma}_{\Phi^{-1}(A), X}(t)^T,\label{Egeodesic2}\\
d_{LPM_n(\epsilon)}(A,B) = &\ d_{{\bf L}_n}(\Phi^{-1}(A), \Phi^{-1}(B)),
\end{align}
where $X = L \half{(L^{-1} W L^{-T})} \D_\epsilon$ in~\eqref{Egeodesic2}.
\end{enumerate}
\end{theorem}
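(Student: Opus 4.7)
My plan is to leverage the fact that $\Phi := \Phi_{\D_\epsilon}$ is already known to be a smooth diffeomorphism from ${\bf L}_n$ onto $LPM_n(\epsilon)$ by Theorem~\ref{Tlpm}, so parts (2) and (3) are essentially formal once part (1) is established: the metric in (2) is defined to be the pullback of $\widetilde{g}$, and the statement in (3) that $\Phi$ is a Riemannian isometry is then automatic, yielding the metric, geodesic, and distance formulas by transport-of-structure from Theorem~\ref{TLnMetric}.

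The substantive step is part~(1), which I would handle by direct computation. Since $\Phi(L) = L \D_\epsilon L^T$ is a quadratic map, differentiating at $L$ along $X \in \LL_n = T_L({\bf L}_n)$ gives
\[
(D_L \Phi)(X) \ = \ \left. \frac{d}{dt} \right|_{t=0} (L + tX) \D_\epsilon (L + tX)^T \ = \ X \D_\epsilon L^T + L \D_\epsilon X^T,
\]
which is manifestly symmetric, hence lies in $\symm_n$, and is linear in $X$. Since $\Phi$ is a diffeomorphism and both tangent spaces have equal dimension $n(n+1)/2$, the map $D_L \Phi$ is automatically a linear isomorphism $\LL_n \to \symm_n$.

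The interesting piece is verifying the explicit inverse formula. Setting $X := L \half{(L^{-1} W L^{-T})} \D_\epsilon$, I would first note that $X \in \LL_n$: $L^{-1} W L^{-T}$ is symmetric, $\half{(\cdot)} = \floo{(\cdot)} + \frac{1}{2}\D(\cdot)$ is lower triangular by construction, and multiplying by the diagonal matrix $\D_\epsilon$ on the right and by the lower triangular $L$ on the left preserves lower-triangularity. To verify $(D_L \Phi)(X) = W$, I would substitute and use the identity $\D_\epsilon^2 = \Id_n$ (each diagonal entry of $\D_\epsilon$ is $\pm 1$) to collapse the expression to
\[
(D_L \Phi)(X) \ = \ L \left( \half{(L^{-1} W L^{-T})} + \half{(L^{-1} W L^{-T})}^T \right) L^T.
\]
The crucial algebraic observation is that for any symmetric matrix $S$, $\half{S} + \half{S}^T = S$: the strictly lower and strictly upper parts reconstitute the off-diagonal of $S$, while the two copies of $\frac{1}{2}\D(S)$ reconstitute its diagonal. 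Applied to $S = L^{-1} W L^{-T}$, this collapses the right-hand side to $L \cdot L^{-1} W L^{-T} \cdot L^T = W$, as required.

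Once part~(1) is in hand, part~(2) is just the definition of the pullback metric $g := \Phi_* \widetilde{g}$, written out in the formula of the statement via~\eqref{Einverse}. Part~(3) requires no additional work: by construction $\Phi$ is an isometry between $({\bf L}_n, \widetilde{g})$ and $(LPM_n(\epsilon), g)$, so Riemannian isometries carry geodesics to geodesics and preserve induced distances, giving the three displayed identities directly from the corresponding formulas in Theorem~\ref{TLnMetric}. I do not anticipate any genuine obstacle; the only mildly subtle point is the symmetric-matrix identity $\half{S} + \half{S}^T = S$ and its interaction with $\D_\epsilon^2 = \Id_n$, which together make the explicit inverse formula for $D_L \Phi$ fall out cleanly.
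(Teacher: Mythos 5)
Your proposal is correct and matches the paper's proof in all essential respects: the same differentiation of the quadratic map $\Phi(L) = L\D_\epsilon L^T$, and the same algebraic use of $\D_\epsilon^2 = \Id_n$ together with the identity $\half{S} + \half{S}^T = S$ for symmetric $S$ (with the paper deriving the inverse formula while you verify it by substitution — logically equivalent), followed by the same formal transport of the metric, geodesics, and distance in parts~(2)--(3). The only cosmetic difference is that you invoke the known diffeomorphism of $\Phi$ to conclude $D_L\Phi$ is an isomorphism, where the paper simply shows surjectivity between equi-dimensional spaces directly.
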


\begin{proof}
We begin by proving~(1). Let $\gamma : (-\delta, \delta) \to {\bf L}_n$
be any curve -- for some $\delta>0$ small enough -- with $\gamma(0) = L$
and $\gamma'(0) = X$. (For instance Lin uses $L + tX$ in~\cite{Cholesky};
one can also use the geodesic $\widetilde{\gamma}_{L,X}(t)$
from~\eqref{Egeodesic}.) Now $\Phi(\gamma(t)) = \gamma(t) \D_\epsilon
\gamma(t)^T$ is a curve in $LPM_n(\epsilon)$ through $L \D_\epsilon L^T$, so
\[
\left. \frac{d}{dt} \Phi(\gamma(t)) \right|_{t=0} = \gamma(0) \D_\epsilon
\gamma'(0)^T + \gamma'(0) \D_\epsilon \gamma(0)^T.
\]

This gives the linear map~\eqref{Edifferential} between the
equi-dimensional spaces $\LL_n$ and $\symm_n$. It suffices to show the
map is onto. Given $L \in {\bf L}_n$ and $W \in \symm_n$, we solve:
\begin{equation}\label{Esolving}
L \D_\epsilon X^T + X \D_\epsilon L^T = W \quad \implies \quad
L^{-1} W L^{-T} = \D_\epsilon (L^{-1}X)^T + (L^{-1}X) \D_\epsilon.
\end{equation}

As $L^{-1}X$ is lower triangular, $L^{-1} X \D_\epsilon = \half{(L^{-1} W
L^{-T})}$. This yields~\eqref{Einverse}, as $\D_\epsilon =
\D_\epsilon^{-1}$. Hence the linear map $D_L \Phi$ is onto (and the
uniqueness of the solution also shows $D_L \Phi$ is injective).

The above proves~(1). Given this, the diffeomorphism $\Phi$ induces a
Riemannian metric on $LPM_n(\epsilon)$, and~(2) follows. Moreover,
\cite[Definition~7.57]{Lee} shows that $\Phi = \Phi_{\D_\epsilon} : {\bf
L}_n \to LPM_n(\epsilon)$ is a Riemannian isometry. But then one deduces
the properties of $LPM_n(\epsilon)$ in~(3) from the corresponding
analogues over ${\bf L}_n$ -- and these were already shown in
Theorem~\ref{TLnMetric}.
\end{proof}

\begin{remark}\label{Runique}
When solving~\eqref{Esolving} above, it was convenient for Riemannian
geometry purposes that $\D_\epsilon$ be a diagonal matrix (in particular,
our Cholesky-type factorization is not the LDU decomposition). As
remarked early on in~\eqref{Elpmdiag}, there exists a unique such matrix
with unit-modulus entries in each cone $LPM_n(\epsilon),
TPM_n(\epsilon)$, though $\D_\epsilon$ here can be more general.
\end{remark}

\begin{remark}
Theorem~\ref{TLPMnMetric} has a natural counterpart for TPM matrices; we
do not state it here.
\end{remark}

\subsection{Abelian Lie group structures compatible with $\Phi$}

We now move towards the Euclidean space structure on each cone
$LPM_n(\epsilon)$. The metric is already introduced above; the next step
is to add in a group operation (the analogue of addition). This was
introduced in~\cite{Cholesky}:
\begin{equation}
\circledcirc : {\bf L}_n \times {\bf L}_n \to {\bf L}_n, \quad
\text{defined by} \quad L \circledcirc K := \floo{L} + \floo{K} + \D(L)
\D(K).
\end{equation}

We make two remarks here: first, the scalar multiplication and inner
product operations were not introduced in~\cite{Cholesky}, they are
introduced below. Second, it is somewhat misdirecting to work with
$\circledcirc$ on the larger space $\LL_n$ (as was done
in~\cite{Cholesky}), because while ${\bf L}_n$ is a subset of $\LL_n$, it
is not a subgroup or a submanifold. Thus, we do not work with
$\circledcirc$ on $\LL_n$.

Lin then showed:

\begin{theorem}[{\cite[Section~3.3]{Cholesky}}]\hfill
\begin{enumerate}[$(1)$]
\item The operation $\circledcirc$ and the inverse map
$L_\circledcirc^{-1} := -\floo{L} + \D(L)^{-1}$ are both smooth, and make
${\bf L}_n$ into an abelian group with identity $\Id_n$.

\item Given $K \in {\bf L}_n$, the differential at $L \in {\bf L}_n$ of
the left (= right) translation $\ell_K$ is:
\begin{equation}
D_L \ell_K : X \mapsto \floo{X} + \D(K) \D(X), \qquad X \in \LL_n,
\end{equation}
and the Riemannian metric is compatible with these:
\begin{equation}
\widetilde{g}_{K \circledcirc L}((D_L \ell_K)(X), (D_L \ell_K)(Y)) =
\widetilde{g}_L(X,Y) \qquad \forall K,L \in {\bf L}_n, \ X,Y \in \LL_n.
\end{equation}
Thus 
$({\bf L}_n, \circledcirc, \Id_n, L_\circledcirc^{-1}; \widetilde{g})$ is
an abelian Lie group, and $\widetilde{g}$ is a bi-invariant Riemannian
metric.
\end{enumerate}
\end{theorem}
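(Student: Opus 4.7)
The plan is to verify the group axioms in part (1) first, then compute the differential and check metric invariance in part (2). The main structural observation is that the operation $\circledcirc$ decouples cleanly: on strictly lower triangular entries it is componentwise addition, and on diagonal entries it is componentwise multiplication of positive reals. Thus ${\bf L}_n$ is essentially $\R^{n(n-1)/2} \times (0,\infty)^n$ as a Lie group under $\circledcirc$, and every axiom reduces to the corresponding axiom for $(\R,+)$ or $((0,\infty),\cdot)$.

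For part (1), smoothness of $\circledcirc$ is immediate because it is polynomial in the matrix entries, and smoothness of the proposed inverse $L \mapsto -\floo{L} + \D(L)^{-1}$ follows because each diagonal $l_{jj} \in (0,\infty)$, on which $l_{jj} \mapsto 1/l_{jj}$ is smooth. For the algebraic axioms I would just expand both sides: associativity and commutativity drop out of the decoupling; $\Id_n$ satisfies $\floo{\Id_n} = 0$ and $\D(\Id_n) = \Id_n$, so $L \circledcirc \Id_n = \floo{L} + \D(L) = L$; and for inverses, requiring $L \circledcirc L_\circledcirc^{-1} = \Id_n$ and splitting into strictly lower triangular and diagonal parts forces $\floo{L_\circledcirc^{-1}} = -\floo{L}$ and $\D(L_\circledcirc^{-1}) = \D(L)^{-1}$, recovering the stated formula.

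For part (2), I would compute $D_L \ell_K(X)$ by choosing any curve $\gamma : (-\delta,\delta) \to {\bf L}_n$ with $\gamma(0) = L$ and $\gamma'(0) = X$, then differentiating $\ell_K(\gamma(t)) = \floo{K} + \floo{\gamma(t)} + \D(K)\D(\gamma(t))$ at $t=0$; the constant term $\floo{K}$ vanishes and the remaining pieces yield $\floo{X} + \D(K)\D(X)$, which is the claimed formula. Since $\D(K)$ has positive diagonal and hence is invertible, this linear map is a bijection on $\LL_n$, confirming that $\ell_K$ is indeed a diffeomorphism.

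The metric compatibility is then a direct substitution. Writing $M := K \circledcirc L$, one has $m_{ij} = k_{ij} + l_{ij}$ for $i>j$ and $m_{jj} = k_{jj} l_{jj}$; the vector $(D_L \ell_K)(X)$ has strictly lower triangular entries $x_{ij}$ and diagonal entries $k_{jj} x_{jj}$. Substituting into $\widetilde{g}_M$, the strictly lower triangular piece is unchanged, while the diagonal piece evaluates to $\sum_j (k_{jj} x_{jj})(k_{jj} y_{jj})(k_{jj} l_{jj})^{-2} = \sum_j x_{jj} y_{jj} l_{jj}^{-2}$; summing these two recovers $\widetilde{g}_L(X,Y)$. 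Because the group is abelian, left and right translations coincide, so bi-invariance follows at once. I do not foresee any real obstacle; the only delicate point is that the factor $l_{jj}^{-2}$ in the definition of $\widetilde{g}_L$ is exactly what cancels the $k_{jj}^2$ introduced by the differential, and so the entire argument rests on the metric being chosen so that the diagonal entries contribute through $(\log l_{jj})$ rather than $l_{jj}$ directly.
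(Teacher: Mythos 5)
Your proof is correct. Note, though, that the paper itself gives no proof of this theorem: it is stated as a citation to \cite[Section~3.3]{Cholesky}, so there is no ``paper proof'' to compare against directly. What you have done is fill in the verification, and your argument is exactly the standard one. The key observation you make -- that $\circledcirc$ decouples into componentwise addition on the strict lower triangle and componentwise multiplication on the diagonal, so that $({\bf L}_n, \circledcirc)$ is isomorphic as a Lie group to $\R^{n(n-1)/2} \times (0,\infty)^n$ -- is precisely the content of the map $\eta$ the paper introduces later in Theorem~\ref{Tinnerproduct} (the $\R$-vector space isomorphism $\eta(L) = (\log l_{11}, \dots, \log l_{nn};\, \{l_{ij} : i>j\})$). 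So your proof anticipates, in elementary coordinates, the same structural decomposition the paper records there in a more polished form. All the checks -- smoothness, the group axioms, the differential $D_L\ell_K(X) = \floo{X} + \D(K)\D(X)$ via a curve argument, and the cancellation $(k_{jj}x_{jj})(k_{jj}y_{jj})(k_{jj}l_{jj})^{-2} = x_{jj}y_{jj}l_{jj}^{-2}$ in the metric invariance computation -- are carried out correctly, and your closing remark that the cancellation hinges on the metric weighting the diagonal by $l_{jj}^{-2}$ (equivalently, measuring through $\log l_{jj}$) is the right way to understand why this particular metric is the translation-invariant one.
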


We now carry out the same thing in each LPM cone, extending the special
case of $\epsilon = {\bf 1}_n$ by Lin in~\cite{Cholesky}. Since
$\Phi_{\D_\epsilon}^{\pm 1}$ are smooth Riemannian isometries from above,
we use these to transfer the group operation smoothly to
$LPM_n(\epsilon)$:
\begin{equation}
A \circledast B := \Phi_{\D_\epsilon}( \Phi_{\D_\epsilon}^{-1}(A)
\circledcirc \Phi_{\D_\epsilon}^{-1}(B) ) = (\Phi_{\D_\epsilon}^{-1}(A)
\circledcirc \Phi_{\D_\epsilon}^{-1}(B)) \cdot \D_\epsilon \cdot
(\Phi_{\D_\epsilon}^{-1}(A) \circledcirc \Phi_{\D_\epsilon}^{-1}(B))^T.
\end{equation}

Thus, $\Phi_{\D_\epsilon}^{\pm 1}$ are group isomorphisms by definition.
Combined with Theorem~\ref{TLPMnMetric}, this yields:

\begin{theorem}
For all $n \geq 1$ and $\epsilon \in \{ \pm 1 \}^n$, the space
$(LPM_n(\epsilon), \circledast)$ is an abelian Lie group, with identity
element $\D_\epsilon$ and inverse map
\begin{equation}
A \mapsto (\Phi_{\D_\epsilon}^{-1}(A))_\circledcirc^{-1}
\cdot \D_\epsilon \cdot (\Phi_{\D_\epsilon}^{-1}(A))_\circledcirc^{-T}.
\end{equation}
Moreover, the Riemannian metric $g$ is bi-invariant with respect to this
(smooth) group action.
\end{theorem}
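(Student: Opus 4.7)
The plan is a straightforward transport-of-structure argument through the smooth diffeomorphism $\Phi_{\D_\epsilon}: {\bf L}_n \to LPM_n(\epsilon)$ established in Theorem~\ref{Tlpm}. Since $\circledast$ is defined precisely so that
\[
\Phi_{\D_\epsilon}(L \circledcirc K) = \Phi_{\D_\epsilon}(L) \circledast \Phi_{\D_\epsilon}(K) \qquad \forall L,K \in {\bf L}_n,
\]
the associativity, commutativity, existence of identity, and existence of inverses for $\circledast$ transfer automatically from the corresponding axioms for $({\bf L}_n, \circledcirc)$ recalled above (Lin's theorem). Smoothness of $\circledast$ and of the $\circledast$-inverse on $LPM_n(\epsilon)$ then follows because $\Phi_{\D_\epsilon}$ and $\Phi_{\D_\epsilon}^{-1}$ are smooth and the corresponding operations on ${\bf L}_n$ are.

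Next I would read off the explicit identity and inverse formulas. The identity is
\[
\Phi_{\D_\epsilon}(\Id_n) = \Id_n \cdot \D_\epsilon \cdot \Id_n^T = \D_\epsilon,
\]
and, writing $L := \Phi_{\D_\epsilon}^{-1}(A)$ for any $A \in LPM_n(\epsilon)$, the $\circledast$-inverse is the image under $\Phi_{\D_\epsilon}$ of the $\circledcirc$-inverse $L_\circledcirc^{-1}$, namely
\[
L_\circledcirc^{-1} \cdot \D_\epsilon \cdot (L_\circledcirc^{-1})^T,
\]
which is exactly the formula claimed in the statement.

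For the bi-invariance of $g$, the key input is Theorem~\ref{TLPMnMetric}(3): $\Phi_{\D_\epsilon}$ is a Riemannian isometry from $({\bf L}_n,\widetilde{g})$ onto $(LPM_n(\epsilon),g)$. Since by construction it also intertwines group operations, for any $K \in {\bf L}_n$ and $B := \Phi_{\D_\epsilon}(K)$ the $\circledast$-translation by $B$ factors as $\Phi_{\D_\epsilon} \circ \ell_K \circ \Phi_{\D_\epsilon}^{-1}$. A conjugate of a $\widetilde{g}$-isometry by a Riemannian isometry is itself a $g$-isometry, so bi-invariance of $\widetilde{g}$ (already cited from~\cite{Cholesky}) transfers verbatim to bi-invariance of $g$.

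There is no real obstacle here: the theorem is a formal corollary of three already-established facts, namely that $\Phi_{\D_\epsilon}$ is (i)~a smooth diffeomorphism, (ii)~a Riemannian isometry with respect to the log-Cholesky metrics, and (iii)~a group isomorphism by the very definition of $\circledast$. The only work is bookkeeping: computing $\Phi_{\D_\epsilon}(\Id_n)$ and $\Phi_{\D_\epsilon}(L_\circledcirc^{-1})$ to match the displayed inverse formula, and observing that conjugation by an isometry preserves metric-preservation. No new analytic or geometric input beyond the previous theorems is required.
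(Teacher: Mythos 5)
Your proposal is correct and takes essentially the same route as the paper: the paper also observes that $\circledast$ is defined precisely to make $\Phi_{\D_\epsilon}^{\pm 1}$ smooth group isomorphisms, so the Lie group axioms and bi-invariance of the metric transfer directly from Lin's results on $({\bf L}_n, \circledcirc, \widetilde{g})$. The paper states this in one or two sentences without the explicit factoring of the translation map that you wrote out, but the argument is the same.
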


Indeed, the smoothness, commutativity, and translation-invariance of
$\circledast$ follow from those of $\circledcirc$, since
$\Phi_{\D_\epsilon}^{\pm 1}$ are smooth group isomorphisms. Moreover, the
treatment in this section can be suitably repeated for each
$TPM_n(\epsilon)$ -- or apply the linear map $A \mapsto P_n A P_n$ over
$LPM_n(\epsilon)$. We thus have:

\begin{proof}[Proof of Theorem~\ref{Triemannian}]
All but the zero sectional curvature part for $LPM_n(\epsilon)$ follow
from the above analysis.
The zero sectional curvature follows either from (the proof of)
\cite[Proposition~8]{Cholesky}, or from Theorem~\ref{Teuclidean} which we
prove presently.
Now applying the linear isometric isomorphism of abelian metric groups $A
\mapsto P_n A P_n$, the results follow for all $TPM_n(\epsilon)$ too.
\end{proof}

Along related lines is the following observation; the proof is direct.

\begin{lemma}\label{Lklein4}
The self-maps of ${\bf L}_n$ given by
\[
L \mapsto L, \quad
L \mapsto L_\circledcirc^{-1}, \quad
L \mapsto (P_n L P_n)^T, \quad
L \mapsto (P_n L_\circledcirc^{-1} P_n)^T
\]
form a commuting Klein-4 subgroup of maps, each of which is an isometric
automorphism of the abelian Lie group ${\bf L}_n$. (Hence this structure
transfers via $\Phi$ to the Lie groups $LPM_n(\epsilon),
TPM_n(\epsilon)$.)
\end{lemma}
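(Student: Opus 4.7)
The plan is to verify, for the two non-trivial ``basic'' maps $\iota: L \mapsto L_\circledcirc^{-1}$ and $\rho: L \mapsto (P_n L P_n)^T$, that each is a well-defined involution on ${\bf L}_n$, that $\iota \circ \rho = \rho \circ \iota$, and that each is simultaneously a group automorphism of $({\bf L}_n, \circledcirc, \Id_n)$ and a Riemannian isometry of $\widetilde g$. Granting all of this, the four displayed self-maps of ${\bf L}_n$ are exactly $\{\Id, \iota, \rho, \iota\circ\rho\}$, which is a Klein-4 subgroup of the isometric automorphism group of $({\bf L}_n, \circledcirc, \widetilde g)$.

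First I would check that $\rho$ lands in ${\bf L}_n$: using $P_n^T = P_n$ and $P_n^2 = \Id_n$, one computes entrywise $(\rho(L))_{ij} = l_{n+1-j,\,n+1-i}$, which is lower triangular with positive diagonal $(l_{nn}, \ldots, l_{11})$. Applying $\rho$ twice returns $L$, and $\iota$ is an involution in any group. Commutativity $\iota \circ \rho = \rho \circ \iota$ follows because both maps preserve the decomposition $L = \floo{L} + \D(L)$: on $\floo{L}$, $\iota$ acts by negation and $\rho$ by an entrywise reindexing, and these two operations commute entrywise; on $\D(L)$, $\iota$ takes entrywise reciprocals while $\rho$ reverses the order of the diagonal entries, and these too commute.

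For the automorphism property, $\iota$ is a homomorphism in any abelian group. For $\rho$, I would use the definition $L \circledcirc K = \floo{L} + \floo{K} + \D(L)\D(K)$ together with the observations that $\rho$ is $\R$-linear, that $\rho(\floo{L}) = \floo{\rho(L)}$, and that conjugating a diagonal matrix by $P_n$ simply reverses its diagonal, whence $\rho(\D(L)\D(K)) = \D(\rho(L))\D(\rho(K))$. Combining these yields $\rho(L \circledcirc K) = \rho(L) \circledcirc \rho(K)$, and the composite $\iota\circ\rho$ is then automatically an automorphism.

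For the isometry property, the differential of $\iota$ at $L$ is $X \in \LL_n \mapsto -\floo{X} - \D(L)^{-2}\D(X)$; substituting into $\widetilde g$ at $\iota(L)$, the strict-lower sum is unchanged while the diagonal sum acquires a factor $l_{jj}^{-4}$ from the differential and $(l_{jj}^{-1})^{-2} = l_{jj}^{2}$ from the metric evaluated at $\iota(L)$, recovering $\widetilde g_L(X,Y)$. The differential of $\rho$ is $X \mapsto P_n X^T P_n$, and the reindexing $(i',j') = (n+1-j,\, n+1-i)$ on the strict-lower sum (respectively $j' = n+1-j$ on the diagonal sum) transforms $\widetilde g_{\rho(L)}$ into $\widetilde g_L$ termwise. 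I expect the only subtlety to be this index-reversal bookkeeping for $\rho$; everything else is formal. The parenthetical transfer to $LPM_n(\epsilon)$ and $TPM_n(\epsilon)$ is then immediate from the Riemannian isometric group isomorphism $\Phi_{\D_\epsilon}$ of Theorem~\ref{TLPMnMetric} together with the commuting square~\eqref{Erevsq}.
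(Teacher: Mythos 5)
Your proof is correct and is precisely the ``direct'' verification that the paper alludes to but does not write out (the paper only writes ``the proof is direct'' before the lemma and then proceeds to the counterexample for $L \mapsto L^{-1}$). All the key steps check out: $\rho(L) \in {\bf L}_n$ via the entrywise formula $(\rho(L))_{ij} = l_{n+1-j,\,n+1-i}$; both $\iota$ and $\rho$ are involutions; their commutativity follows from the observations $\rho(\floo{L}) = \floo{\rho(L)}$ and $\rho(\D(L)) = \D(\rho(L))$; $\rho$ is a $\circledcirc$-homomorphism because it respects the $\floo{\cdot} + \D(\cdot)$ splitting and conjugation by $P_n$ is multiplicative on diagonal matrices; and the differential/metric computations for $\iota$ (factor $l_{jj}^{-4} \cdot l_{jj}^2 = l_{jj}^{-2}$) and $\rho$ (index reversal $j \mapsto n+1-j$ on both sums) each reduce $\widetilde g_{\iota(L)}$ resp.\ $\widetilde g_{\rho(L)}$ back to $\widetilde g_L$. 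The transfer to $LPM_n(\epsilon)$ and $TPM_n(\epsilon)$ via $\Phi_{\D_\epsilon}$ and the commuting square~\eqref{Erevsq} is exactly as you say. No gaps.
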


The third of these maps comes from~\eqref{Ediffeos}; but note that $L
\mapsto L^{-1}$ is neither a group-map nor an isometry. For instance, for
$L = \begin{pmatrix} 1 & 0 \\ 2 & 2 \end{pmatrix}$, we have
$L^{-1} \ \begin{pmatrix} 1 & 0 \\ -1 & 1/2 \end{pmatrix}$ and
$(L_{\circledcirc}^{-1})^{-1} = \begin{pmatrix} 1 & 0 \\ 4 & 2
\end{pmatrix}$.
So
\[
d(L, \Id_2) = \sqrt{4+(\log 2)^2} \quad \neq \quad
d(L^{-1}, \Id_2) = \sqrt{1+(\log 2)^2}.
\]
Similarly, $L^{-1} \circledcirc (L_{\circledcirc}^{-1})^{-1} =
\begin{pmatrix} 1 & 0 \\ 3 & 1 \end{pmatrix} \neq \Id_2
= (L \circledcirc L_{\circledcirc}^{-1})^{-1}$.
Also, $(L^{-1})_\circledcirc^{-1} = \begin{pmatrix} 1 & 0 \\ 1 & 1/2
\end{pmatrix} \neq (L_{\circledcirc}^{-1})^{-1}$.

\begin{remark}
Another observation is: extend $\circledcirc$ to all square matrices by
defining $\ceil{A}$ to be the strictly upper triangular part of $A$, such
that $A = \floo{A} + \D(A) + \ceil{A}$; and now define
\begin{equation}\label{Eextend}
A \circledcirc A' := \floo{A} + \floo{A'} + \ceil{A} + \ceil{A'} + \D(A)
\D(A').
\end{equation}
Now if $A = L \D_\epsilon L^T, A' = L' \D_\epsilon (L')^T \in
LPM_n(\epsilon)$, then $A \circledast A' = (L \circledcirc L')
\D_\epsilon (L^* \circledcirc (L')^*)$.
\end{remark}

\begin{remark}[Riemannian exponentials and logarithms; parallel
transport]
The above treatment can be carried out (with minimal modifications) for
the cones $TPM_n(\epsilon)$ as well. Moreover, in~\cite{Cholesky}, the
Riemannian exponential and logarithm maps over ${\bf L}_n$ were also
written down, and one can then transfer them to each $LPM_n(\epsilon)$
and $TPM_n(\epsilon)$. Similarly, parallel transport along geodesics in
${\bf L}_n$, developed in~\cite{Cholesky}, can be transported in parallel
to all cones $LPM_n(\epsilon)$ and $TPM_n(\epsilon)$. We will not
elaborate further on the details of these ``verbatim'' geometric
constructions to~\cite{Cholesky}.
\end{remark}

\section{Cholesky (complex) manifolds are Euclidean spaces, whose
union yields Hilbert space}
\label{Seuclidean}

This section goes beyond~\cite{Cholesky} and first extends the abelian
Lie group and Riemannian manifold structure of Cholesky space from
\cite{Cholesky} (recorded in Section~\ref{Sriemannian}), to that of a
finite-dimensional $\mathbb{R}$-vector space -- and even beyond, to an
inner product space.

\begin{theorem}\label{Tinnerproduct}
Fix $n \geq 1$.
\begin{enumerate}[$(1)$]
\item Define ``scalar multiplication''
$\cdot : \R \times {\bf L}_n \to {\bf L}_n$ via:
$\alpha \cdot L := \alpha \floo{L} + \D(L)^\alpha$.
Then the map
\begin{equation}
\eta : ({\bf L}_n, \circledcirc, \cdot) \to (\R^{n(n+1)/2}, +, \cdot)
\quad \text{defined by} \quad \eta(L) := (\log l_{11}, \dots, \log
l_{nn} \, ; \, \{ l_{ij} : i > j \})
\end{equation}
is an $\R$-vector space isomorphism.

\item Also define the form $\tangle{\cdot,\cdot} : {\bf L}_n^2 \to {\bf
L}_n$ defined by
\begin{equation}
\tangle{L,K} := \sum_{i>j} l_{ij} k_{ij} + \sum_{j=1}^n \log(l_{jj})
\log(k_{jj}).
\end{equation}
Then $\tangle{\cdot,\cdot}$ is $\R$-bilinear, and for all $L,K \in {\bf
L}_n$ we have
\[
d_{{\bf L}_n}(L,K)^2 = \tangle{L-'K,L-'K}, \qquad \text{where} \qquad
L-'K := L \circledcirc K_{\circledcirc}^{-1} = \eta^{-1}(\eta(L) -
\eta(K)).
\]
Therefore $({\bf L}_n, \tangle{\cdot,\cdot})$ is isometrically isomorphic
to the Euclidean space $\R^{n(n+1)/2}$.

\item These operations commute with the reversal map~\eqref{Ereversal} on
Cholesky space -- i.e., $L \mapsto \rev{L}$ is an isometric isomorphism
of Hilbert spaces. Moreover, $\floo{\rev{L}} =
\overset{\longleftarrow}{\floo{L}}$ and $\D(\rev{L}) =
\overset{\longleftarrow}{\D(L)}$ on ${\bf L}_n$.
\end{enumerate}
\end{theorem}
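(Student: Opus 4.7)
The plan is to verify each of the three parts by unpacking the definitions of $\circledcirc$, the scalar action $\cdot$, and the reversal map, and then checking the claimed identities entrywise. There is no substantive obstacle, only bookkeeping; the proof is essentially a verification that $\eta$ is an isomorphism of the relevant algebraic-and-metric structure, and that reversal is compatible with that structure.

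For part~(1), I would first observe that $\eta$ is a set-theoretic bijection with inverse sending $(a_1, \dots, a_n;\, \{x_{ij}\}_{i>j})$ to the matrix in ${\bf L}_n$ with diagonal entries $e^{a_j}>0$ and strict lower entries $x_{ij}$. Compatibility with $\circledcirc$ amounts to comparing entries: by definition, the strict-lower part of $L \circledcirc K$ is $l_{ij}+k_{ij}$ and the diagonal part is $l_{jj} k_{jj}$, so applying $\log$ on the diagonals and the identity off-diagonal gives $\eta(L \circledcirc K) = \eta(L) + \eta(K)$. The same entrywise computation, together with $\log(l_{jj}^\alpha) = \alpha \log l_{jj}$, yields $\eta(\alpha \cdot L) = \alpha \eta(L)$, so $\eta$ is an $\mathbb{R}$-linear isomorphism.

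For part~(2), $\mathbb{R}$-bilinearity of $\tangle{\cdot,\cdot}$ is immediate from its defining formula once one regards the diagonal terms as products of $\log l_{jj}$ and $\log k_{jj}$ (precisely the coordinates introduced in part~(1)). The main computation is the identity $\tangle{L-'K,\, L-'K} = d_{{\bf L}_n}(L,K)^2$: I would write $L -' K = L \circledcirc K_\circledcirc^{-1}$, read off its diagonal entries as $l_{jj}/k_{jj}$ and its strict-lower entries as $l_{ij} - k_{ij}$, and substitute into the formula for $\tangle{\cdot,\cdot}$; the result matches \eqref{ElogCholesky} exactly. Since $\eta$ is already a vector-space isomorphism and pulls the Euclidean inner product on $\mathbb{R}^{n(n+1)/2}$ back to $\tangle{\cdot,\cdot}$, it upgrades to the claimed isometric isomorphism.

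For part~(3), reversal $L \mapsto \rev{L} = (P_n L P_n)^T$ is $\mathbb{R}$-linear (composition of the linear maps $A \mapsto P_n A P_n$ and transpose) and, using $P_n^2 = \Id_n$, is an involution on $\mathbb{R}^{n \times n}$. A direct index check via $\rev{L}_{ij} = L_{n+1-j,\, n+1-i}$ shows that the substitution $(i,j) \mapsto (n+1-j,\, n+1-i)$ maps the strict lower triangle of $L$ bijectively to that of $\rev{L}$, and reverses the order of the diagonal entries; this yields $\floo{\rev{L}} = \overset{\longleftarrow}{\floo{L}}$ and $\D(\rev{L}) = \overset{\longleftarrow}{\D(L)}$. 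Combined with the fact that reversal is multiplicative on diagonal matrices (again from $P_n^2 = \Id_n$), these two identities immediately imply that reversal commutes with $\circledcirc$ and with scalar multiplication. Preservation of $\tangle{\cdot,\cdot}$ then follows from the same index substitution, which reindexes the sum over $\{i>j\}$ bijectively, while the diagonal sum is invariant under reversing the order of its terms. The nearest thing to a subtle step is this last index-tracking through $P_n$, transpose, and $\floo{\cdot}, \D$; but once the two key identities are in place, the remaining compatibility checks are formal.
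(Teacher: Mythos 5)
Your verification is correct and complete; the paper itself omits this proof, stating only ``The proofs are straightforward,'' and your entrywise unpacking of $\circledcirc$, the scalar action, $\tangle{\cdot,\cdot}$, and the index substitution $\rev{L}_{ij} = L_{n+1-j,\,n+1-i}$ is exactly the intended argument. One small point worth making explicit when you invoke multiplicativity of reversal on diagonal matrices: reversal is in general \emph{anti}-multiplicative ($\rev{AB} = \rev{B}\,\rev{A}$, as stated in Theorem~\ref{Ttpm}), and becomes multiplicative here only because diagonal matrices commute; that is indeed the fact you need for $\rev{\D(L)\D(K)} = \rev{\D(L)}\,\rev{\D(K)}$.
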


The proofs are straightforward. Moreover, Theorem~\ref{Tinnerproduct}
immediately implies Theorem~\ref{Teuclidean}.

\begin{remark}\label{RVectorSpace}
Theorems~\ref{Tinnerproduct} and~\ref{Teuclidean} resemble the main
conclusions of~\cite{VectorSpace} -- which showed an analogous result for
an alternate structure on the cone $PD_n$. Namely, the exponential map
\begin{equation}
\exp : (\symm_n,+) \to (PD_n, \odot), \qquad \exp(A) \odot
\exp(B) := \exp(A+B)
\end{equation}
was shown to be an isometric isomorphism which makes $PD_n$ into an
abelian Lie group, with translation-invariant metric $d(\exp(A), \exp(B))
= \| \log( \exp(-A/2) \exp(B) \exp(-A/2) ) \|$. Moreover, defining scalar
multiplication as $\alpha \cdot' \exp(A) := \exp(\alpha A)$ shows that
$\exp : \symm_n \to PD_n$ is in fact a $\mathbb{R}$-vector space
isomorphism. In particular, $PD_n$ has sectional curvature zero for this
metric.
\end{remark}

As a consequence of Theorems~\ref{Tinnerproduct}(3) and~\ref{Ttpm}, we
have:

\begin{cor}\label{Cadditive}
For all $n \geq 1$ and $\epsilon \in \{ \pm 1 \}^n$, the reversal map $:
LPM_n(\epsilon) \to TPM_n(\epsilon)$ is an isometric isomorphism of
Hilbert spaces.
\end{cor}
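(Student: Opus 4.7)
The plan is to factor the reversal map through Cholesky space and invoke the isometric isomorphisms already established in the preceding theorems. Specifically, I would choose the base points $B_\epsilon = \D_\epsilon \in LPM_n(\epsilon)$ and $C_\epsilon = \rev{\D_\epsilon} = \D_{\epsrev}$ for $TPM_n(\epsilon)$; a quick inspection of trailing principal minors (or, equivalently, the right-hand column of diagram~\eqref{Erevsq}) confirms $\D_{\epsrev} \in TPM_n(\epsilon)$, so this is a legitimate base point for the TPM analogues of the structures built in Sections~\ref{Sriemannian} and~\ref{Seuclidean}.

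With these base points, the commuting square~\eqref{Erevsq} of Theorem~\ref{Ttpm} rewrites the reversal map as
\begin{equation*}
\rev{(\cdot)}\big|_{LPM_n(\epsilon)} \;=\; \Phi^{\D_{\epsrev}} \;\circ\; \rev{(\cdot)}\big|_{{\bf L}_n} \;\circ\; \Phi_{\D_\epsilon}^{-1},
\end{equation*}
so it suffices to check that each of the three factors on the right is an isometric isomorphism of Hilbert spaces. For the innermost map on Cholesky space, this is precisely Theorem~\ref{Tinnerproduct}(3). For $\Phi_{\D_\epsilon}: {\bf L}_n \to LPM_n(\epsilon)$, it follows by combining Theorem~\ref{TLPMnMetric}(3) (Riemannian isometry and Lie group isomorphism) with the identification in Theorem~\ref{Teuclidema} — namely, that the abelian metric Lie group $LPM_n(\epsilon)$ is isometrically isomorphic, via $\eta \circ \Phi_{\D_\epsilon}^{-1}$, to $\R^{n(n+1)/2}$. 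The analogous statement for $\Phi^{\D_{\epsrev}}: {\bf L}_n \to TPM_n(\epsilon)$ is the TPM counterpart of the same two results (used already in the proof of Theorem~\ref{Triemannian}).

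Composing three isometric Hilbert space isomorphisms then yields the corollary. There is no substantive obstacle: the only delicate bookkeeping is ensuring the base points are chosen so that both downward arrows of~\eqref{Erevsq} are isometric isomorphisms onto the respective cones with the Hilbert-space structures of Theorem~\ref{Teuclidean}, and this is handled by the identity $\rev{\D_\epsilon} = P_n \D_\epsilon P_n = \D_{\epsrev}$ together with the fact (from Proposition~\ref{Ptpm}) that reversal maps $LPM_n(\epsilon)$ onto $TPM_n(\epsilon)$.
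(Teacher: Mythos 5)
Your proof is correct and takes essentially the same route as the paper's (which simply remarks that the Hilbert-space structures on $LPM_n(\epsilon)$ and $TPM_n(\epsilon)$ are by definition pulled back from ${\bf L}_n$ via $\Phi_{\D_\epsilon}$ and $\Phi^{\D_{\epsrev}}$, so one need only check reversal on ${\bf L}_n$ via the extended $\circledcirc$); you have merely made the factorization through the commuting square~\eqref{Erevsq} explicit and cited Theorem~\ref{Tinnerproduct}(3) by name. Note one typo: ``Theorem~\ref{Teuclidema}'' should read ``Theorem~\ref{Teuclidean}''.
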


\begin{proof}
Note that the Hilbert space structures on $LPM_n(\epsilon),
TPM_n(\epsilon)$ are simply transfers from Cholesky space ${\bf L}_n$ via
the maps $\Phi_{\D_\epsilon}, \Phi^{\D_{\epsrev}}$ respectively. Now the
result is easily verified, using~\eqref{Eextend} to define $\circledcirc$
on upper triangular matrices.
\end{proof}

We now go even further beyond~\cite{Cholesky} to infinite-dimensional
inner product spaces, followed by the Hermitian analogues of them and of
$LPM_n(\epsilon), TPM_n(\epsilon)$.

\begin{proof}[Proof of Theorem~\ref{Thilbert}]
As we will work here and in proving Theorem~\ref{Thermitian} with the
same structures over real and complex spaces, big and small, we first
write down the proposed \textit{real} inner product space operations over
the largest cone ${\bf L} = {\bf L}^\C_{\mathcal{H}} \cong ({\bf
L}^\R_{\mathcal{H}})^2$, and can then restrict to smaller spaces:
\begin{align}
L \circledcirc K = &\ \floo{L} + \floo{K} + \D(L) \D(K),\\
L_\circledcirc^{-1} = &\ -\floo{L} + \D(L)^{-1},\\
\alpha \cdot L = &\ \alpha \floo{L} + \D(L)^\alpha \quad \forall \alpha
\in \R,\\
\tangle{L,K} = &\ \sum_{i>j\geq 1} \overline{l_{ij}} k_{ij} + \sum_{j
\geq 1} \log(l_{jj}) \log(k_{jj}),\\
d_{\bf L}(L,K) = &\ \tangle{L-'K,L-'K} := \tangle{ L \circledcirc
K_\circledcirc^{-1}, L \circledcirc K_\circledcirc^{-1}},\\
\eta(L) = &\ (\log l_{11};\ l_{21}, \log l_{22};\ \dots;\ l_{n1}, \dots,
l_{n,n-1}, \log l_{nn};\ \dots).
\end{align}

One checks that these operations restrict to ${\bf L}_{\mathcal{H}} =
{\bf L}^\R_{\mathcal{H}}$ and further to ${\bf L}_{00} = {\bf L}_{00}^\R$
-- as well as that $\eta$ is an isometric $\R$-vector space isomorphism
on these cones.
Straightforward verifications now show the real inner product structure
on the (possibly complex) ${\bf L}$-cones. E.g.\ $\eta$ is compatible
with the inclusions in~\eqref{Esquare}, so it extends from the cones
${\bf L}_n$ to their direct limit ${\bf L}_{00}$. Thus ${\bf L}_{00}
\cong c_{00}$; as these are respectively dense in ${\bf L}_{\mathcal{H}}$
and $\ell^2$, and the isometry $\eta$ takes Cauchy sequences to Cauchy
sequences, it provides a unique extension to the closure that is
consistent with its definition there.

Having shown the results for the ${\bf L}$-spaces, the results for the
$LPM(\beps)$-cones follow as the structures transfer via $\Phi$. For
instance, the commuting squares~\eqref{Esquare} imply
$\Phi_{\D_{\beps}}({\bf L}_{00}) = LPM_{00}(\beps)$. As the isometry
$\Phi_{\D_{\beps}}$ is Cauchy-continuous, it extends to an isomorphism of
the closures.
\end{proof}

We next introduce:

\begin{defn}\label{Dlpm2}
Given a sign sequence $\beps \in \{ \pm 1 \}^\infty$, define $LPM(\beps)$
to be the set of real symmetric semi-infinite matrices whose leading $k
\times k$ principal minor has sign $\epsilon_k$, for all $k \geq 1$.
Inside this, define $LPM_{\mathcal{H}}(\beps)$ to be the image under
$\Phi_{\D_{\beps}}$ of ${\bf L}_{\mathcal{H}}$. Similarly define
$LPM^\C(\beps) \supset LPM^\C_{\mathcal{H}}(\beps)$.
\end{defn}

Now just as $LPM_{00}(\beps) = \Phi_{\D_{\beps}}({\bf L}_{00})$, it is
natural to ask what is the image $LPM_{\mathcal{H}}(\beps) =
\Phi_{\D_{\beps}}({\bf L}_{\mathcal{H}})$. One can check via translating
in the language of $\Phi_{\D_{\beps}}$, this is precisely
\begin{equation}\label{Eset}
\left\{ A \in LPM(\beps) \, : \, \sum_{j \geq 1} \left( \log
\frac{\epsilon_{j+1} \det A_{[j+1][j+1]}}{\epsilon_j \det A_{[j][j]}}
\right)^2 < \infty,\ \sum_{j \geq 1} \epsilon_j \epsilon_{j+1} \left(
a_{jj} - \frac{\det A_{[j+1][j+1]}}{\det A_{[j][j]}} \right) < \infty
\right\}.
\end{equation}

\noindent We now show the Cholesky decomposition and real Hilbert space
structure for Hermitian matrices.

\begin{proof}[Proof of Theorem~\ref{Thermitian}]
The first part is proved in the same way as Theorem~\ref{Tlpm}. Ditto for
the second part, modulo the obvious changes, e.g.\ $l(mp+uq) \leadsto l(m
\overline{p} + \overline{u} q)$, or $L^T \leadsto L^*$, or ${\bf b}^T
\leadsto {\bf b}^*$. The solution now uses $ac - |b|^2$, to yield
$\displaystyle q = \sqrt{\frac{ac-|b|^2}{a} \cdot \frac{m}{mv-|u|^2}}$.
We add here that the real Euclidean ball to which all
$LPM^\C_n(\epsilon)$, $TPM^\C_n(\epsilon)$, and ${\bf L}^\C_n$ are
diffeomorphic, is $B_{\R^{n^2}}({\bf 0},1)$.

The third part is proved in the same way as Theorem~\ref{Ttpm}. For the
fourth part, we use the maps defined above, as well as the \textit{real}
vector space map:
\begin{equation}
\eta^\C(L) := (\log l_{11}; \ \Re(l_{21}), \Im(l_{21}), \log l_{22}; \
\Re(l_{31}), \Im(l_{31}), \Re(l_{32}), \Im(l_{32}), \log l_{33}; \
\dots).
\end{equation}
As this is again an isometric $\R$-vector space isomorphism, the
other verifications are as over $\R$.
\end{proof}

\begin{remark}[Fr\'echet means and barycentres -- in Hilbert space]
In~\cite{Cholesky}, the author used their machinery to discuss Fr\'echet
means of random matrices in ${\bf L}_n$, and log-Cholesky means for e.g.\
$PD_n$-valued random matrices $A$ with finite second moment:
$\mathbb{E}[ d_{PD_n}(A,A_0)^2 ] < \infty$. For instance, the barycentre
of finitely many matrices in either space.
In~\cite[Proposition~9]{Cholesky}, the existence and uniqueness of this
was proved via using that ${\bf L}_n$ and $\LL_n$ are homeomorphic, so
${\bf L}_n$ is simply connected and has zero sectional curvature, and
hence one can apply results of Bhattacharya and Patrangenaru~\cite{BP}
for complete, simply connected Riemannian manifolds.
However, now it is clear that one does not need~\cite{BP}, since the
homeomorphism $: {\bf L}_n \to \LL_n \cong \R^{n(n+1)/2}$ is in fact an
isometric isomorphism of Hilbert spaces (Theorem~\ref{Teuclidean}). Hence
``usual'' multivariate analysis and finite-dimensional probability
applies to it.

In fact, one can do more: Fr\'echet means/barycentres can also be
computed in the larger (real Hilbert) spaces $LPM_{\mathcal{H}}^\R({\bf
1}_\infty), LPM_{\mathcal{H}}^\C({\bf 1}_\infty)$. Via the
diffeomorphisms $\Phi_{\D_\epsilon}$, $A \mapsto A^{-1}$, these
computations also carry over to every $LPM_n^\F(\epsilon),
LPM_{\mathcal{H}}^\F(\beps)$, and $TPM^\F_n(\epsrev)$ -- for $\F = \R$
and $\F = \C$.
\end{remark}

\subsection{Towers of abelian metric groups, including over other fields}

We have seen real inner product space structures on
$LPM_n^\C((\epsilon_1, \dots, \epsilon_n)), LPM_{00}^\C(\beps),
LPM_{\mathcal{H}}^\C(\beps)$ for all $\beps \in \{ \pm 1 \}^\infty$ and
$n \geq 1$. These spaces formed towers of inclusions, which we now
construct over other subfields $\F \subseteq \C$. 

\begin{defn}\label{Dsubfield}
Let $\mathbb{F}$ be any subfield of $\C$.
\begin{enumerate}[(1)]
\item Given $n \geq 1$ and $\epsilon \in \{ \pm 1 \}^n$, define
$LPM_n^\F(\epsilon)$ to be the Hermitian matrices in $\F^{n \times n}$
whose leading principal $k \times k$ minor has sign $\epsilon_k$, for all
$k \geq 1$.

\item Define the direct limit $LPM_{00}^\F(\beps)$ as in
Theorem~\ref{Teuclidean}(2), but with all entries in $\F$. (We do not
work with the TPM counterpart.) Also define $LPM^\F_{\mathcal{H}}(\beps)$
as in~\eqref{Eset}.

\item Define the ``completion'' ${\bf L}_{\mathcal{H}}^\F(\beps)$ as in
Theorem~\ref{Thilbert}(3), with all entries not real but in $\F$.

\item With the subscript-symbol $\star \in \{ 00, \mathcal{H} \}$, define
the unions
\begin{equation}
LPM^\F_n := \bigsqcup_{\epsilon \in \{ \pm 1 \}^n} LPM^\F_n(\epsilon),
\qquad LPM^\F_\star := \bigsqcup_{\beps \in \{ \pm 1 \}^\infty}
LPM^\F_\star(\beps),
\end{equation}

\item For these three choices, define the corresponding sets ${\bf
L}^\F_\star$ of lower-triangular matrices with entries in $\F$ and
diagonal entries in $\F \cap (0,\infty)$.

\item In all cases, given a (possibly semi-infinite) real symmetric
matrix $B_{\beps} \in LPM_\star^\F(\beps)$, define $\Phi_{B_{\beps}} :
{\bf L}_\star^\F \to LPM_\star^\F(\beps)$ via:  $\Phi_{B_{\beps}}(L) := L
B_{\beps} L^*$.
\end{enumerate}
\end{defn}

Then the inner product space structure recorded in
Theorems~\ref{Teuclidean}, \ref{Thilbert}, and~\ref{Thermitian}
also occurs here:

\begin{theorem}\label{Ttower}
Let $\F$ be any subfield of $\C$ with $\F \cap (0,\infty)$ closed under
positive square roots.\footnote{Examples include
(i)~the constructible numbers, important in Euclidean geometry, and
(ii)~the (real) algebraic numbers ($\overline{\mathbb{Q}} \cap
\mathbb{R}$ or) $\overline{\mathbb{Q}}$.
More generally, one can start with an arbitrary subfield $\mathbb{E}$ of
$\R$, and then
(i$'$)~inductively adjoin at each stage, the square roots of elements in
$(0,\infty)$ already obtained. This yields $\mathbb{F}$ with the above
property.
Alternately, one can (ii$'$)~take the algebraic closure of any subfield
$\mathbb{E} \subseteq \mathbb{C}$, and then use $\mathbb{F} =
\overline{\mathbb{E}}$ or $\overline{\mathbb{E}} \cap \R$.}

\begin{enumerate}[$(1)$]
\item Given a subscript-symbol $\star \in \{ n, 00,
\mathcal{H} \}$, and a sign tuple/sequence $\beps$, one has a tower of
commuting diagrams of monomorphic isometries of abelian groups with
bi-invariant metrics:
\begin{equation}\label{Etower}
   \begin{CD}
     {\bf L}^\F_1  @>>> {\bf L}^\F_2 @>>> \, \cdots \, @>>> {\bf
     L}^\F_{00} @>>> ({\bf L}^\F_{\mathcal{H}}, \circledcirc)\\
     @V\Phi_{\D_\epsilon}VV @V\Phi_{\D_\epsilon}VV @.
     @V\Phi_{\D_{\beps}}VV
     @V\Phi_{\D_{\beps}}VV\\
     LPM^\F_1((\epsilon_1))   @>>> LPM^\F_2 ((\epsilon_1, \epsilon_2))
     @>>> \, \cdots \, @>>> LPM^\F_{00}(\beps) @>>>
     (LPM^\F_{\mathcal{H}}(\beps), \circledast)
   \end{CD}
\end{equation}
Each of these is an ``additive'' 2-divisible subgroup of the Hilbert
space $({\bf L}_{\mathcal{H}}^\C, \tangle{\cdot,\cdot})$, and it has a
translation-invariant metric induced by the restriction of the
(bi-additive positive definite) inner product.

\item Each downward arrow $\Phi_{\D_\epsilon}$ or $\Phi_{\D_{\beps}}$,
from ${\bf L}^\F_\star$ to $LPM^\F_\star(\beps)$, is a homeomorphism that
is the restriction of a smooth diffeomorphism.

\item Using Theorem~\ref{Ttpm}, we can add another row of homeomorphisms
(which are also group morphisms) involving the cones
$TPM_n^\F(\epsilon)$, for all $n, \epsilon$.

\item If $\F$ is algebraically closed $\overline{\mathbb{E}}$, or equals
$\overline{\mathbb{E}} \cap \R$, then each of these 2-divisible groups is
in fact a $\mathbb{Q}$-vector subspace of ${\bf L}_{\mathcal{H}}$. More
strongly, this holds if $\F \cap (0,\infty)$ is closed under taking $q$th
roots for all primes $q \geq 2$.
\end{enumerate}
\end{theorem}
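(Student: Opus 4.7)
The plan is to derive Theorem~\ref{Ttower} by \emph{restricting} the $\C$-level structures already established in Theorems~\ref{Tlpm},~\ref{Teuclidean},~\ref{Thilbert}, and~\ref{Thermitian} to the subfield $\F$, verifying at each step that the operations stay within $\F$ under the hypothesis that $\F \cap (0,\infty)$ is closed under positive square roots. No new geometric or group-theoretic objects are built; the proof is pure closure-checking.

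First I would show that ${\bf L}^\F_\star$ is closed under $\circledcirc$ and $(-)^{-1}_\circledcirc$: off-diagonal entries combine by $\F$-addition and negation, while diagonal entries combine by multiplication and reciprocation in the multiplicative group $\F \cap (0,\infty)$; for $\star = \mathcal{H}$, the $\ell^2$-summability of $\{\log l_{jj}\}$ and $\{l_{ij}\}_{i>j}$ is preserved via the bound $(\log(l_{jj}k_{jj}))^2 \leq 2(\log l_{jj})^2 + 2(\log k_{jj})^2$ and the analogous triangle inequality off the diagonal. Next, Algorithm~\ref{AlgCholesky} shows that each entry of $\Phi^{-1}_{\D_{\beps}}(A)$ is computed via field operations in $\F$ together with a single positive-real square root per diagonal step, so the hypothesis on $\F \cap (0,\infty)$ is \emph{exactly} what makes $\Phi_{\D_{\beps}}$ a bijection between ${\bf L}^\F_\star$ and $LPM^\F_\star(\beps)$. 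The commuting diagrams~\eqref{Etower} then descend verbatim from~\eqref{Esquare} and its direct-limit / Hilbert-completion analogues in Theorem~\ref{Thilbert}; the horizontal maps are inclusions, hence monomorphic, and are isometric because the inner product is defined entrywise and agrees on nested subspaces. Translation invariance of the resulting bi-invariant metric is inherited from $({\bf L}^\C_{\mathcal{H}}, \circledcirc)$. Part~(2) then follows because the smoothness of $\Phi_{\D_\epsilon}, \Phi_{\D_{\beps}}$ over $\C$ restricts in the subspace topology, and part~(3) because the reversal $A \mapsto (P_n A P_n)^*$ preserves $\F$-entries and fits into the commuting square~\eqref{Erevsq}.

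For 2-divisibility in part~(1), given $L$ the witness $K := \tfrac{1}{2}\floo{L} + \D(L)^{1/2}$ satisfies $K \circledcirc K = L$; it lies in ${\bf L}^\F_\star$ since $\tfrac{1}{2} \in \F$ and positive square roots of diagonal entries lie in $\F \cap (0,\infty)$ by hypothesis, with $\mathcal{H}$-summability preserved via $\sum (\log \sqrt{l_{jj}})^2 = \tfrac{1}{4} \sum (\log l_{jj})^2$. For part~(4), the same construction using the scalar-multiplication formula $\alpha \cdot L = \alpha\floo{L} + \D(L)^\alpha$ with $\alpha = 1/q$ gives $q$-divisibility for every prime $q$ under the stronger hypothesis; combined over all primes and with the torsion-freeness inherited from the embedding $\eta$ into $\R^{\mathbb{N}}$, the group is uniquely $\mathbb{Q}$-divisible, hence a $\mathbb{Q}$-vector space. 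For $\F = \overline{\mathbb{E}}$ or $\F = \overline{\mathbb{E}}\cap\R$ the $q$th-root hypothesis is automatic: for any $a \in \F \cap (0,\infty)$, the polynomial $x^q - a$ splits in $\overline{\mathbb{E}}$, and its unique positive real root lies in $\overline{\mathbb{E}} \cap \R \subseteq \F$. The most delicate point I expect is ensuring that the divisibility witnesses remain inside the semi-infinite completion ${\bf L}^\F_\mathcal{H}$ (as opposed to escaping into the ambient ${\bf L}^\C_{\mathcal{H}}$); this reduces to the elementary observation that $q$th roots rescale the relevant $\ell^2$-sums by the factor $1/q^2$ and hence preserve finiteness.
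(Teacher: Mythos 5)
Your proof is correct and follows essentially the same route as the paper: restrict the $\C$-level structures to $\F$ by closure-checking, with the algorithmic Cholesky factorization plus the square-root hypothesis on $\F \cap (0,\infty)$ giving surjectivity of $\Phi_{\D_{\beps}}$ onto $LPM^\F_\star(\beps)$, and the scalar-multiplication formula $\alpha \cdot L = \alpha\floo{L} + \D(L)^\alpha$ giving 2-divisibility and the $\mathbb{Q}$-vector-space claim in part~(4). The only small difference is that the paper supplies a direct algebraic injectivity argument (from $L\D_{\beps}L^* = K\D_{\beps}K^*$ one deduces $K^{-1}L$ is diagonal with positive, unit-modulus entries, hence the identity) where you appeal to uniqueness of the algorithm's output; the two are equivalent.
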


\begin{proof}
In the first part, to show that each of the maps $\Phi_{\D_{\beps}}$ is
one-to-one, say $L \D_{\beps} L^* = K \D_{\beps} K^*$. Then $(K^{-1} L)
\D_{\beps} = \D_{\beps} (L^{-1}K)^*$, so both sides are diagonal. This
implies $K^{-1} L$ is diagonal, with diagonal entries of
modulus $1$ and in $(0,\infty)$. So $K^{-1} L = \Id_{\star'}$, with
$\star' = n$ if $\star = n$, else $\star' = \infty$.

In the remaining parts, the closure of $\F \cap (0,\infty)$ under
positive square roots is needed in order to Cholesky-factorize matrices
in $LPM^\F_\star(\epsilon)$, by the algorithmic proof of
Theorem~\ref{Tlpm}.

This explains the hypothesis; we then proceed. We will only discuss the
2-divisibility and the final part. First, $\frac{1}{2} \cdot L =
\frac{1}{2} \floo{L} + \D(L)^{1/2}$ inside ${\bf L}_{\mathcal{H}}^\C$.
Now by the closure of $\F \cap (0,\infty)$ under $\sqrt{\cdot}$, it
follows that $\frac{1}{2} \cdot L \in {\bf L}^\F_\star$ for $L \in {\bf
L}^\F_\star$.
Similarly, if the hypothesis in the final part holds, and $p,q$ are
nonzero integers, then $\frac{p}{q} \cdot L = \frac{p}{q} \floo{L} +
\D(L)^{p/q}$, and this has all entries in $\F$.
\end{proof}

Akin to Lemma~\ref{Lklein4}, in the complex case we again have a
commuting -- and larger -- group of self-maps of complex Cholesky space.
More generally:

\begin{lemma}\label{Lklein8}
Let $\F$ be as Theorem~\ref{Ttower}, with $\F \nsubseteq \R$. Then the
self-maps of ${\bf L}_n^\F$ given by
\[
L \mapsto L_\circledcirc^{-1}, \quad L \mapsto (P_n L P_n)^*, \quad L
\mapsto \overline{L}
\]
pairwise commute, and generate a Boolean group $(\mathbb{Z} / 2
\mathbb{Z})^3$ of isometric automorphisms of ${\bf L}_n^\F$.
These maps are moreover $\mathbb{Q}$-linear if $\F$ is as in
Theorem~\ref{Ttower}(4), and $\R$-linear if $\F = \C$.
They transfer via $\Phi_{\D_\epsilon}^\F$ to $LPM_n^\F(\epsilon)$ for
each $\epsilon \in \{ \pm 1 \}^n$, and similarly to each
$TPM_n^\F(\epsilon)$.
\end{lemma}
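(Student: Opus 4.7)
The argument has four parts. First, verify each of $\sigma := (\cdot)_\circledcirc^{-1}$, $\tau := (P_n \cdot P_n)^*$, and $\kappa := \overline{(\cdot)}$ is a well-defined involution on ${\bf L}_n^\F$: $\sigma$ uses closure of $\F \cap (0,\infty)$ under reciprocals; $\tau(L) = P_n L^* P_n$ is lower triangular with real-positive diagonal $(l_{nn},\ldots,l_{11})$; and $\kappa$ is a self-map once $\F$ is closed under complex conjugation (automatic for $\F = \C$ and for the algebraic-closure examples in Theorem~\ref{Ttower}(4)). Each map is visibly of order two.

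Next, I would check that $\tau$ and $\kappa$ are group-automorphisms of $({\bf L}_n^\F, \circledcirc)$ and isometries: both commute with $\floo{\cdot}$ and $\D(\cdot)$ and fix the real-positive diagonals, hence preserve both the formula $L \circledcirc K = \floo{L} + \floo{K} + \D(L)\D(K)$ and the norm $\|L\|^2 = \sum_{i>j}|l_{ij}|^2 + \sum_j (\log l_{jj})^2$. The key substep is $\floo{\tau(L)} = (P_n \floo{L} P_n)^*$, which follows because $P_n \floo{L} P_n$ is strictly upper triangular, so its conjugate-transpose is strictly lower. Once $\tau, \kappa$ are group-automorphisms, they commute with $\sigma$ automatically, since $\sigma$ is group-inversion on an abelian group; this yields $\sigma\tau = \tau\sigma$ and $\sigma\kappa = \kappa\sigma$. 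The remaining relation $\tau\kappa = \kappa\tau$ reduces to the direct identity
\[
\overline{(P_n L P_n)^*} = P_n L^T P_n = (P_n \overline{L} P_n)^*,
\]
using that $P_n$ is real symmetric.

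Third, to see the generated group has order exactly $8$ (for $n \geq 2$), I would test on $L = \mathrm{diag}(2,1,\ldots,1) + i E_{2,1}$: its eight images under the words in $\sigma, \tau, \kappa$ are distinguished already by inspecting the $(1,1)$, $(n,n)$, and $(2,1)$ entries. The $\mathbb{Q}$-linearity (resp.\ $\mathbb{R}$-linearity when $\F = \C$) follows from the formula $\alpha \cdot L = \alpha\floo{L} + \D(L)^\alpha$ by verifying each of the three maps commutes with this scalar action. Finally, transfer to $LPM_n^\F(\epsilon)$ and $TPM_n^\F(\epsilon)$ is automatic via the isomorphisms $\Phi_{\D_\epsilon}^\F$ of Theorems~\ref{Triemannian} and~\ref{Thermitian}, combined with the reversal map of Theorem~\ref{Ttpm}. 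The main obstacle is bookkeeping in the $\tau$-step: one must verify that $(P_n(\cdot)P_n)^*$ preserves the decomposition $L = \floo{L} + \D(L)$ as well as the group operation $\circledcirc$, and that the apparent asymmetry between $(\cdot)^*$ and the real-scalar structure does not obstruct linearity.
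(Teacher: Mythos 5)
Your proof is correct, and in fact the paper supplies no argument for this lemma at all: it is stated right after the analogous real-case Lemma~\ref{Lklein4}, whose proof is declared ``direct'' and omitted. What you've written is therefore a bona fide proof rather than a rederivation, and it is complete. The most efficient step is the observation that $\sigma = (\cdot)_\circledcirc^{-1}$ is the group-inversion of the abelian group $({\bf L}_n^\F,\circledcirc)$, so once $\tau$ and $\kappa$ are checked to be $\circledcirc$-automorphisms, commutation of $\sigma$ with each of them comes for free; this leaves only the single identity $\tau\kappa = \kappa\tau$, which you dispatch by $\overline{(P_nLP_n)^*} = P_nL^TP_n = (P_n\overline{L}P_n)^*$. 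All the remaining verifications (involutivity, the decomposition $\floo{\tau(L)} = (P_n\floo{L}P_n)^*$, preservation of the log-Cholesky norm under $\tau,\kappa$, the scalar-action compatibility giving $\mathbb{Q}$- or $\R$-linearity, and the transfer via $\Phi_{\D_\epsilon}$) check out. Two small caveats worth noting, which reflect imprecisions in the statement rather than flaws in your argument: (i)~for $\tau$ and $\kappa$ to be self-maps of ${\bf L}_n^\F$ one needs $\F$ closed under complex conjugation, which Theorem~\ref{Ttower} does not literally require but which holds for all the field classes the paper actually uses -- you correctly flag this; and (ii)~your order-$8$ witness needs $n\geq 2$, and for $n=1$ the maps $\tau,\kappa$ collapse to the identity so the generated group is only $\mathbb{Z}/2\mathbb{Z}$, consistent with the fact that your exhibited eight images are otherwise pairwise distinguished by the triple $(l_{11}, l_{nn}, l_{21})$.
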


\section{Alternate Lie group structure on the $LPM_n$ and $TPM_n$ cones;
probability}\label{Sbiggroup}

We now switch tracks from Cholesky decompositions and Riemannian
geometry, to exploring towers of groups of LPM matrices with a motivation
from probability theory.
Theorem~\ref{Ttower} revealed two towers of isomorphic subgroup-pairs
that were not just Riemannian manifolds but in fact additive subgroups of
$\R$-vector spaces with real inner products.

We now explore additional groups found in LPM spaces, which necessarily
cannot embed into any Banach space, yet possess translation-invariant
metrics. We first introduce the following notation.

\begin{defn}\label{Dschurprod}
Define the Schur product ${\bf a} \circ {\bf b}$ of two real tuples
${\bf a}, {\bf b}$ of equal length, to be the tuple of their
coordinatewise products.
\end{defn}

Now we have:

\begin{theorem}\label{TbiggroupsF}
Let $\F$ be any subfield of $\C$ with $\F \cap (0,\infty)$ closed under
positive square roots, and $\star$ a subscript-symbol in $\{ n, 00,
\mathcal{H} \}$. Define the binary operation $\boxdot$ on $LPM_\star^\F$
as follows: given matrices $A = L \D_{\beps} L^T$ and $A' = L'
\D_{\beps'} (L')^T$, with $L, L' \in {\bf L}^\F_\star$ (where
$\beps$ denotes $\epsilon \in \{ \pm 1 \}^n$ if $\star = n$), we set:
\begin{align}
A \boxdot A' := &\ (L \circledcirc L') \cdot (\D_{\beps} \D_{\beps'})
\cdot (L \circledcirc L')^T \in LPM_\star^\F(\beps \circ \beps'),\\
(L \D_{\beps} L^T)_\boxdot^{-1} := &\ L_\circledcirc^{-1} \D_{\beps}
L_\circledcirc^{-T}.
\end{align}
\begin{enumerate}[$(1)$]
\item This structure (and identity $\Id_{\star'}$) makes $LPM_\star^\F$
into an abelian group that is isomorphic to the direct product
$PD^\F_\star \times S_2^{\star'}$, where $\star' = n$ if $\star = n$, and
$\star' = \infty$ otherwise. On the subgroup $PD^\F_\star =
LPM^\F_\star({\bf 1}_{\star'})$, we have $\boxdot \equiv \circledast$.

\item This group is equipped with a family of bi-invariant
``$L^p$-norms'' that extend the metric on the subgroup $PD_n^\F$ and the
discrete metric (Kronecker delta) on $S_2^{\star'}$:
\begin{equation}\label{Edp}
d_p(L \D_{\beps} L^T, K \D_{\beps'} K^T) := \left\| (d_{{\bf
L}_\star}(L,K), 1 - \delta_{\beps,\beps'}) \right\|_p, \qquad p \in
[1,\infty].
\end{equation}

\item If $\star = \star' = n$, then one can similarly equip $TPM_n^\F$
with a parallel structure. If moreover $\F = \R$ or $\C$, the cones
$LPM_n^\F$ and $TPM_n^\F$ are abelian complete separable metric Lie
groups and Riemannian manifolds with bi-invariant metric (and sectional
curvature zero). Moreover, the reversal map is an isometric isomorphism
of Lie groups.

\item The nontrivial torsion elements of $LPM_\star^\F$ are precisely the
matrices $\D_{\beps}$ for all $\beps \in \{ \pm 1 \}^{\star'} \setminus
\{ {\bf 1}_{\star'} \}$.
\end{enumerate}
\end{theorem}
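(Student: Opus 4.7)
The plan is to treat $LPM_\star^\F$ through the factor-decomposition $A \leftrightarrow (L,\beps)$ given by the Cholesky-type factorization $\Phi_{\D_\beps}(L) = L\D_\beps L^T$ (Theorem~\ref{Tlpm}, together with Theorem~\ref{Thilbert} and Theorem~\ref{Ttower} in the infinite settings). First I would observe that this factorization is \emph{unique} across sign patterns: if $L\D_\beps L^T = L'\D_{\beps'}(L')^T$, then the signs of the leading $k\times k$ principal minors force $\beps = \beps'$, and Theorem~\ref{Tlpm} forces $L = L'$. This makes $\boxdot$ unambiguously defined and exhibits a bijection
\[
\mathcal{F} : LPM_\star^\F \;\longleftrightarrow\; {\bf L}_\star^\F \times \{\pm 1\}^{\star'}, \qquad L\D_\beps L^T \mapsto (L,\beps).
\]
The heart of~(1) is the identity $\mathcal{F}(A \boxdot A') = (L\circledcirc L', \beps\circ\beps')$, which is immediate from the definition of $\boxdot$ together with $\D_\beps \D_{\beps'} = \D_{\beps\circ\beps'}$; under this identification $\boxdot$ becomes the componentwise operation on $({\bf L}_\star^\F, \circledcirc) \times (\{\pm 1\}^{\star'}, \circ) \cong PD_\star^\F \times S_2^{\star'}$, from which all group axioms follow. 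On the subgroup $\beps = {\bf 1}_{\star'}$ the sign factor is fixed and $\D_{\bf 1} = \Id$, so $\boxdot$ reduces to $\circledast$.

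For~(2), $d_p$ is the $L^p$-combination of the bi-invariant metric $d_{{\bf L}_\star}$ on the first factor (Theorem~\ref{TLnMetric}) and the discrete metric on the abelian group $S_2^{\star'}$ on the second, hence a bi-invariant metric on the product group, which transports to $LPM_\star^\F$ via $\mathcal{F}$. For the Lie/Riemannian part of~(3) with $\F \in \{\R,\C\}$, I would use $\mathcal{F}$ to identify $LPM_n^\F$ with a disjoint union of $2^n$ copies of ${\bf L}_n^\F$, each smoothly diffeomorphic to Euclidean space (Theorems~\ref{Teuclidean} and~\ref{Thermitian}). Equipping each component with its log-Cholesky Riemannian metric (Theorem~\ref{TLPMnMetric}) yields a flat (zero sectional curvature), complete, separable Riemannian manifold; the group operation restricted to any pair $LPM_n^\F(\beps) \times LPM_n^\F(\beps') \to LPM_n^\F(\beps\circ\beps')$ becomes $(L,L') \mapsto L\circledcirc L'$ under $\mathcal{F}$, hence smooth, and bi-invariance of $d_p$ on each component is inherited from that of $\circledcirc$, while translation across components acts as a rigid Euclidean shift. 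The $TPM_n^\F$ case reduces to $LPM_n^\F$ via the reversal map of Theorem~\ref{Ttpm}: the commuting square~\eqref{Erevsq}, combined with $\rev{\D_\beps} = \D_{\epsrev}$, shows that reversal intertwines the respective $\boxdot$-operations and preserves $d_p$, hence is an isometric isomorphism of Lie groups.

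For~(4), using $\mathcal{F}$ one computes
\[
A^{\boxdot k} \;=\; \mathcal{F}^{-1}\bigl(L^{\circledcirc k},\, \beps^{\circ k}\bigr) \;=\; \mathcal{F}^{-1}\bigl(k\floo{L} + \D(L)^k,\ \beps^{\circ k}\bigr).
\]
Requiring $A^{\boxdot k} = \Id_{\star'}$ forces $k\floo{L} = 0$ and $\D(L)^k = \Id$ with positive diagonal entries, so $\floo{L} = 0$ and $\D(L) = \Id$, i.e.\ $L = \Id$; and then $\beps^{\circ k} = {\bf 1}_{\star'}$ is equivalent to $k$ even or $\beps = {\bf 1}_{\star'}$. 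Thus every nontrivial torsion element has $L = \Id$ and $\beps \neq {\bf 1}_{\star'}$, so equals $\D_\beps$, and any such element has order exactly $2$. The main obstacle I anticipate is purely technical: in the semi-infinite settings $\star \in \{00,\mathcal{H}\}$, one must check that $\boxdot$ preserves the summability/decay conditions that single out ${\bf L}_\star^\F$ and $LPM_\star^\F(\beps)$ inside the algebraic products; this however reduces to the parallel stability of $\circledcirc$ on ${\bf L}_\star^\F$ already established in Theorem~\ref{Thilbert}, so no new analytic input is required.
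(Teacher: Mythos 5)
Your proposal is correct and follows essentially the same route as the paper's proof: you factor $LPM_\star^\F$ as a direct product $({\bf L}_\star^\F, \circledcirc)\times(\{\pm 1\}^{\star'}, \circ)$ via the Cholesky map (the paper uses the isomorphic factorization $PD_\star^\F\times S_2^{\star'}$), transfer the $\ell^p$-combination of the two bi-invariant metrics, handle TPM by the reversal map, and identify the torsion. The only cosmetic difference is in part~(4), where you argue by direct computation of $A^{\boxdot k}$ while the paper deduces torsionfreeness of the $PD$-factor abstractly from its embedding into a Euclidean space; the two arguments are equivalent.
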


Before showing this result, we make two remarks. First, this result
clearly implies/subsumes Theorem~\ref{Tbiggroup}. Second, there are other
subgroups that also share these properties: let $G$ be any subgroup of
the torsion group $S_2^{\star'}$. Then $PD_\star^\F \times G$ is a
subgroup of $LPM_\star^\F$. Thus one can write down another tower of
subgroups for $G$ (under $\boxdot$) akin to~\eqref{Etower}.

\begin{proof}[Proof of Theorem~\ref{TbiggroupsF}]\hfill
\begin{enumerate}[(1)]
\item Consider the map $\varphi : PD^\F_\star \times S_2^{\star'} \to
LPM_\star^\F$, given by $(L L^*,\beps) \mapsto \Phi_{\D_{\beps}}(L) = L
\D_{\beps} L^*$ (where $L^*$ denotes conjugate-transpose, not $\star$).
By ``countably applying'' Theorem~\ref{Tlpm} (or its proof over $\F$,
suitably adapted to the complex case), $\varphi$ is a group map and a
bijection, which proves the first claim. The restriction to $PD_\star^\F$
is clear.

\item We compute:
\begin{align*}
&\ d_p((J \D_{\beps''} J^*) \boxdot (L \D_{\beps} L^*),
(J \D_{\beps''} J^*) \boxdot (K \D_{\beps'} K^*))\\
= &\ d_p(\varphi((J \circledcirc L)(J \circledcirc L)^*, \beps'' \circ
\beps), \varphi((J \circledcirc K)(J \circledcirc K)^*, \beps'' \circ
\beps'))\\
= &\ \| (d_{{\bf L}_\star}(J \circledcirc L, J \circledcirc K),
1 - \delta_{\beps'' \circ \beps, \beps'' \circ \beps'}) \|_p\\
= &\ \| (d_{{\bf L}_\star}(L,K), 1 - \delta_{\beps, \beps'}) \|_p
\end{align*}
since $d_{{\bf L}_\star}$ and the discrete metric on $S_2^{\star'}$ are
each translation-invariant. This shows the result.

\item The first claim is now standard, via~\eqref{Ediffeos}. Now let $\F
= \R$ or $\C$; then $LPM_n^\F = \bigsqcup_{\epsilon \in \{ \pm 1 \}^n}
LPM_n^\F(\epsilon)$ is a disjoint union of $2^n$-many cosets, each
isomorphic to the complete separable metric Lie group $PD_n^\F$ (which
has zero sectional curvature and a bi-invariant Riemannian metric, from
results proved above). The final assertion about the reversal map is also
easily verified, using that
(a)~this map is anti-multiplicative;
(b)~it commutes with $\circledcirc$ (including via~\eqref{Eextend}),
conjugate, and transpose; and
(c)~the metrics on these cones are defined via $\Phi_{\D_\epsilon}$ and
$\Phi^{\D_{\epsrev}}$, and these are intertwined by the reversal map via
Theorem~\ref{Ttpm}.

\item This follows from~(1), as $PD_\star^\F \subseteq PD_\star^\C$
(which is a Euclidean space), hence is torsionfree.
\qedhere
\end{enumerate}
\end{proof}

Before moving on, we mention two ``bigger'' groups not considered so far:

\begin{prop}\label{Plpm}
Given a field $\F \subseteq \C$ as in Theorem~\ref{TbiggroupsF}, and a
sign pattern $\beps \in \{ \pm 1 \}^\infty$, let $LPM^\F(\beps)$ consist
of all semi-infinite Hermitian matrices with entries in $\F$ and $k
\times k$ leading principal minor of sign $\epsilon_k$. Define their
union $LPM^\F := \bigsqcup_{\beps \in \{ \pm 1 \}^\infty} LPM^\F(\beps)$.
\begin{enumerate}[$(1)$]
\item Let ${\bf L}^\F$ denote the semi-infinite lower triangular matrices
with entries in $\F$ and diagonal entries in $\F\cap(0,\infty)$.
Given $B_{\beps} \in LPM^\F(\beps)$, the Cholesky map
$\Phi_{B_{\beps}} : {\bf L}^\F \to LPM^\F(\beps)$ is a bijection.

\item Both $({\bf L}^\F, \circledcirc)$ and hence $(LPM^\F(\beps),
\circledast)$ are again isomorphic abelian groups.

\item $(LPM^\F, \boxdot)$ is also an abelian group, with nontrivial
torsion elements.
\end{enumerate}
\end{prop}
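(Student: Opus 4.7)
The plan is to lift the finite-dimensional results (Theorem~\ref{Tlpm} for part~(1), the group-theoretic part of Theorem~\ref{TbiggroupsF} for parts~(2)--(3)) to the semi-infinite setting by exploiting the hereditary nature of Algorithm~\ref{AlgCholesky}: each entry of the output depends only on finitely many entries of the input. For part~(1), injectivity of $\Phi_{B_\beps}$ is handled by observing that if $L B_\beps L^* = K B_\beps K^*$ in $LPM^\F(\beps)$, then for every $n$ the top-left $n \times n$ blocks satisfy $L_{[n][n]} (B_\beps)_{[n][n]} L_{[n][n]}^* = K_{[n][n]} (B_\beps)_{[n][n]} K_{[n][n]}^*$ (since $L,K$ are lower triangular and $(B_\beps)_{[n][n]} \in LPM_n^\F((\epsilon_1,\dots,\epsilon_n))$); uniqueness in Theorem~\ref{Tlpm} then forces $L_{[n][n]} = K_{[n][n]}$ for all $n$, so $L=K$. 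For surjectivity, given $A \in LPM^\F(\beps)$, apply Algorithm~\ref{AlgCholesky} to each leading principal submatrix $A_{[n][n]}$ (noting $A_{[n][n]} \in LPM_n^\F((\epsilon_1,\dots,\epsilon_n))$) to obtain $L_n \in {\bf L}_n^\F$. Because the algorithm computes the $j$th row using only the top-left $j \times j$ data of both $A$ and $B_\beps$, uniqueness yields $(L_{n+1})_{[n][n]} = L_n$, so the $L_n$ glue to a unique $L \in {\bf L}^\F$ with $\Phi_{B_\beps}(L) = A$.

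For the well-definedness of $\Phi_{B_\beps}$ on its domain, I would observe that for all $i,j \geq 1$,
\[
(L B_\beps L^*)_{ij} = \sum_{k \leq i}\sum_{l \leq j} L_{ik}\, (B_\beps)_{kl}\, \overline{L_{jl}}
\]
is a finite sum (since $L$ is lower triangular), and then the Cauchy--Binet calculation from~\eqref{Ecauchybinet}, applied block by block, gives $\det(LB_\beps L^*)_{[k][k]} = (\det L_{[k][k]})^2 \det(B_\beps)_{[k][k]}$, confirming the sign pattern~$\beps$. For part~(2), the operations $\floo{\cdot}, \D(\cdot), \D(\cdot)^{-1}$, $\circledcirc$, and $L_\circledcirc^{-1}$ extend entrywise to semi-infinite lower triangular matrices with positive diagonal, so all group axioms for $({\bf L}^\F, \circledcirc)$ reduce to the corresponding entrywise identities verified in the finite case. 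The bijection $\Phi_{\D_\beps}$ from part~(1) transfers this structure to $(LPM^\F(\beps), \circledast)$. For part~(3), define $\boxdot$ as in Theorem~\ref{TbiggroupsF} and verify that the map $(LL^*, \beps) \mapsto L \D_\beps L^*$ gives a bijection $PD^\F \times S_2^\infty \to LPM^\F$ (using part~(1) with $B_\beps = \D_\beps$ for each $\beps$, and $PD^\F = LPM^\F({\bf 1}_\infty)$) that intertwines $\boxdot$ with the direct-product operation. Nontrivial torsion elements arise from the $S_2^\infty$ factor: each $\D_\beps$ with $\beps \neq {\bf 1}_\infty$ satisfies $\D_\beps \boxdot \D_\beps = \Id_\infty$, giving order-two elements.

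The main technical obstacle, such as it is, is verifying the compatibility/gluing in part~(1) -- everything else is essentially a bookkeeping lift of the finite case, since no convergence issues arise (all operations in the definitions of $\circledcirc$, $\Phi_{B_\beps}$, and $\boxdot$ involve only finite sums thanks to lower triangularity). There is, however, one genuinely new subtlety: one must check that a completely arbitrary $B_\beps \in LPM^\F(\beps)$ -- not only the diagonal $\D_\beps$ -- has all leading principal $k \times k$ submatrices lying in $LPM_k^\F((\epsilon_1,\dots,\epsilon_k))$ so that Theorem~\ref{Tlpm} applies at every stage; this is built into the definition of $LPM^\F(\beps)$ and requires no further work. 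I do not claim here a topological or Lie-group structure on $LPM^\F$ (unlike the cases $\star \in \{n, 00, \mathcal{H}\}$ of Theorem~\ref{TbiggroupsF}), since the full semi-infinite cone lies outside the real inner-product framework of Theorems~\ref{Thilbert} and~\ref{Ttower}; the proposition as stated asks only for a bijection and an abelian group structure, which is what the above plan delivers.
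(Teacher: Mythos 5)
Your proposal is correct and takes essentially the same approach as the paper, which disposes of this proposition with a single remark that ``the algorithmic proof of Theorem~\ref{Tlpm} extends to the present case, when carried out inductively for all $n$.'' Your write-up is simply a faithful expansion of that sketch: you make explicit the finitary/hereditary nature of Algorithm~\ref{AlgCholesky} (each output entry depends on only finitely many input entries), the resulting gluing of the $L_n$, and the routine transfer of the group structure.
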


We only remark here that the algorithmic proof of Theorem~\ref{Tlpm}
extends to the present case, when carried out inductively for all $n$.

Notice a difference between Proposition~\ref{Plpm} and
Theorem~\ref{Thilbert}: the metric in ${\bf L}_\mathcal{H} \cong
LPM_{\mathcal{H}}(\beps)$ does not extend to the larger abelian group
${\bf L}^\C \cong LPM^\C(\beps)$. Nevertheless, as we presently explain,
not only can $LPM^\C(\beps)$ (and hence each subgroup $LPM^\F(\beps)$) be
given a translation-invariant metric, it in fact \textit{embeds into a
Banach space!} See Theorem~\ref{Trealclosed}, over more general fields.

\subsection{Real-closed fields}

In this part, we venture out beyond the complex plane. One can carry out
the Cholesky decomposition over certain fields outside of $\C$: namely,
the \textit{real-closed fields} $\mathbb{E}$~\cite{BCR} (which include
$\R$). In this setting, akin to above, one can work over the algebraic
closure $\F = \overline{\mathbb{E}} = \mathbb{E}[\sqrt{-1}]$, or more
generally, $\F$ the algebraic closure of an arbitrary subfield
$\mathbb{E}' \subseteq \mathbb{E}$ -- or even the iterated closure under
positive square roots of the positive elements. Over such fields $\F$:
\begin{enumerate}
\item One can define the cones $LPM_n^\F(\epsilon)$ and
$TPM_n^\F(\epsilon)$.
\item The Cholesky decomposition algorithm in Theorem~\ref{Tlpm} goes
through, and we have a family of bijections $\Phi_{B_\epsilon} : {\bf
L}_n^\F \to LPM^\F_n(\epsilon)$, one for each $B_\epsilon \in
LPM^\F_n(\epsilon)$.
One also has the linear and nonlinear maps~\eqref{Ediffeos} relating
$LPM^\F_n(\epsilon)$ to $TPM^\F_n(\epsilon)$ and $TPM^\F_n(\epsrev)$,
respectively.
\item The map $L \circledcirc K := \floo{L} + \floo{K} + \D(L)\D(K)$
again makes ${\bf L}^\F_n$ into an abelian group.
\end{enumerate}

However, (to our knowledge) there is no metric naturally associated with
a real-closed field! So the Riemannian manifold and abelian Lie group
structures do not go through. Nor is it clear how to take ``real scalar
multiples'', because e.g.\ $\D(L)^\pi$ is not defined for general $L$.

Despite these absences, one can still show as above that Cholesky space
${\bf L}^\F_n$ sits inside a Banach space. Hence so does the isomorphic
group $LPM^\F_n(\epsilon)$ (via $\Phi_{\D_\epsilon}$),  and hence the
larger union-groups:

\begin{theorem}\label{Trealclosed}
As above, let $\star \in \{ n, 00 \}$, and fix a sign sequence $\beps \in
\{ \pm 1 \}^{\star'}$, with $\star' = n$ if $\star = n$ and $\star' =
\infty$ otherwise.
Also let $\mathbb{E}$ be a real-closed field (e.g.\ $\R$) and let $\F$ be
any subfield of $\overline{\mathbb{E}} = \mathbb{E}[\sqrt{-1}]$ such that
$\F \cap (0,\infty)$ is closed under taking positive square roots.
\begin{enumerate}[$(1)$]
\item Then one has a tower of groups under $(\circledcirc, \Id_{\star'},
L_\circledcirc^{-1})$, with each pair ${\bf L}^\F_\star,
LPM^\F_\star(\beps)$ indeed in bijection via the Cholesky factorization
map $\Phi_{\D_{\beps}}^{\pm 1}$:
\begin{equation}\label{Erealclosed}
[ \, {\bf L}^\F_1 \cong LPM^\F_1((\epsilon_1)) \, ] \subset 
[ \, {\bf L}^\F_2 \cong LPM^\F_2((\epsilon_1,\epsilon_2)) \, ] \subset
\cdots \subset
[ \, {\bf L}^\F_{00} \cong LPM^\F_{00}(\beps) \, ] \subset
[ \, {\bf L}^\F \cong LPM^\F(\beps) \, ].
\end{equation}

\item The group ${\bf L}^\F \cong LPM^\F(\beps)$ additively embeds inside
a real Banach space $\mathbb{B}$, hence inherits its norm. One can
therefore apply Banach space probability~\cite{LT} to each of its
subgroups $G$, e.g.\ $G$ in~\eqref{Erealclosed}. Thus the expectation of
a $G$-valued random variable makes sense (and it lives in $\mathbb{B}$).
\end{enumerate}
\end{theorem}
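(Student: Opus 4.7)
\emph{Part (1).} The algorithmic Cholesky proof of Theorem~\ref{Tlpm}, and its formal version in Algorithm~\ref{AlgCholesky}, uses only field arithmetic in $\F$ together with one positive square root per diagonal entry (see~\eqref{E2x2q} and~\eqref{Eqsquare}); both are available by hypothesis on $\F$. Hence the induction on $n$ produces bijections $\Phi_{\D_\beps} : {\bf L}^\F_n \to LPM^\F_n(\epsilon)$ for every $n$ and $\epsilon$. The block-diagonal inclusions~\eqref{Esquare} commute with $\Phi$, yielding the first inclusions of the tower; running the induction through all $n$ in parallel (the induction step never used finiteness of $n$) gives the semi-infinite case ${\bf L}^\F \cong LPM^\F(\beps)$. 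Since the operations $\circledcirc$ and $\circledast$ are both defined through field arithmetic alone (including via~\eqref{Eextend}), each $\Phi_{\D_\beps}$ is a group isomorphism.

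\emph{Part (2): decomposition and embedding into an $\R$-vector space.} The key observation is that $\circledcirc$ acts independently on the strictly lower triangular and diagonal parts of $L$, giving a canonical isomorphism of abelian groups
\[
({\bf L}^\F, \circledcirc) \; \cong \; (M_\downarrow^\F, +) \; \times \; \bigl( (\F \cap (0,\infty))^{\mathbb{N}}, \cdot \bigr),
\]
where $M_\downarrow^\F$ denotes the strictly lower triangular semi-infinite matrices over $\F$. The first factor is already an $\F$-vector space. The second factor is torsion-free, since any positive real $n$th root of unity equals $1$; hence by the standard divisible-hull construction it embeds into a $\mathbb{Q}$-vector space. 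Tensoring with $\R$ over $\mathbb{Q}$ produces an $\R$-vector space $V$ into which $({\bf L}^\F, \circledcirc)$ injects as an abelian subgroup. Finally, fixing any Hamel basis $(e_i)_{i \in I}$ of $V$, the tautological $\R$-linear injection $V \hookrightarrow c_{00}(I) \subset \ell^1(I)$ realizes $V$ as an additive subspace of the real Banach space $\mathbb{B} := \ell^1(I)$. Composing yields the desired additive embedding of ${\bf L}^\F$ into $\mathbb{B}$; via the isomorphisms in~\eqref{Erealclosed}, the same works for each subgroup and for each $LPM^\F(\beps)$ through $\Phi_{\D_\beps}$, all inheriting the translation-invariant restricted norm.

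\emph{Main obstacle.} The one genuinely delicate point is the diagonal factor when $\mathbb{E}$ is a real-closed field strictly larger than $\R$ (e.g., Puiseux series over $\R$, or nonstandard reals): then $\F \cap (0,\infty)$ need not sit inside $\R$, so $\log$ is unavailable and there is no concrete multiplicative-to-additive conversion. The abstract divisible-hull / $\otimes_{\mathbb{Q}} \R$ route sidesteps this purely via torsion-freeness; the trade-off is loss of canonicity, in that the resulting norm depends on the chosen basis.
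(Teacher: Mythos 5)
Your proof is correct, and in Part (2) it takes a genuinely different route from the paper. For Part (1) both you and the paper simply observe that the algorithmic Cholesky decomposition of Theorem~\ref{Tlpm} (and its inductive step up to the semi-infinite case) uses only field arithmetic plus positive square roots, which are available in $\F$ by hypothesis; and that $\Phi_{\D_\beps}$ carries $\circledcirc$ to $\circledast$ formally, so the vertical bijections are group isomorphisms. For Part (2), however, the paper invokes as a black box the Polymath theorem~\cite{Polymath} that a group embeds additively into a real Banach space if and only if it is abelian and torsion-free, and then verifies that $({\bf L}^\F, \circledcirc)$ is torsion-free: $L^{\circledcirc k} = \Id_{\star'}$ forces $k\floo{L} = 0$ (so $\floo{L}=0$ since ${\rm char}(\F)=0$) and $\D(L)^k = \Id_{\star'}$ (so $\D(L)=\Id_{\star'}$ since $\F\cap(0,\infty)$ is totally ordered). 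You instead re-derive the needed direction of that theorem concretely: decompose $({\bf L}^\F, \circledcirc)$ as the direct product of the additive group of strictly lower triangular matrices and the multiplicative group of positive diagonals; note both factors are torsion-free; pass to the divisible hull by tensoring with $\mathbb{Q}$, then with $\R$, and finally embed via a Hamel basis into $c_{00}(I) \subset \ell^1(I)$. The student's route is self-contained and makes the source of non-canonicity (Hamel basis) visible, while the paper's is shorter but opaque. One minor imprecision in your write-up is the phrase that the diagonal factor is torsion-free ``since any positive real $n$th root of unity equals $1$'': the diagonal entries live in a general real-closed field $\mathbb{E}$, not $\R$, so the correct justification is that in any totally ordered field, $x>0$ and $x^k=1$ force $x=1$ (you are clearly aware of this subtlety, given your ``Main obstacle'' paragraph, but the sentence itself should be emended).
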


\begin{proof}
The first part is similar to the special case of $\mathbb{E} = \R$ and
$\overline{\mathbb{E}} = \C$. For the second, we invoke a result from a
recent Polymath project~\cite{Polymath}:
\textit{a group is abelian and torsionfree if and only if it embeds
additively inside a real Banach space $\mathbb{B}$.}
(Note that the separability of $\mathbb{B}$ is not assured.)
Thus it suffices to check that ${\bf L}^\F \cong LPM^\F(\beps)$ is
torsionfree. But if $L \in {\bf L}^\F$ and $0 < k \in \mathbb{Z}$, then
\[
k \cdot L = k \floo{L} + \D(L)^k = \Id_{\star'}.
\]
Since ${\rm char}(\F) = 0$, $\floo{L} = 0$; and since $\F$ is totally
ordered, elements in $(0,1)$ and $(1,\infty)$ cannot have $k$th power
$1$. So $\D(L) = \Id_{\star'}$, as desired.
\end{proof}

\subsection{Probability inequalities for Cholesky or LPM-valued random
matrices}\label{Sprob}

Earlier in this section and the previous one, we saw three kinds of
abelian metric groups:
\begin{itemize}
\item Subgroups of $(LPM_{\mathcal{H}}^\F(\beps), \circledast) \cong
({\bf L}^\F_{\mathcal{H}}, \circledcirc)$, with $\F \subseteq \C$ and $\F
\cap (0,\infty)$ closed under positive square roots. By
Theorem~\ref{Ttower}, these metric groups embed additively and
isometrically inside the Hilbert space $LPM_{\mathcal{H}}^\C(\beps)$. As
a result, standard Euclidean and Hilbert (hence Banach) space probability
results apply to these groups.

\item Subgroups of $(LPM^\F(\beps), \circledast) \cong ({\bf L}^\F,
\circledcirc)$, with $\mathbb{E}$ a real-closed field and $\F \subseteq
\mathbb{E}[\sqrt{-1}]$ having the same $\sqrt{\cdot}$-closure property.
These groups additively embed inside Banach spaces, so a comprehensive
probability theory again applies to them~\cite{LT} (including
expectations).

\item Then there are the abelian metric subgroups of
$(LPM^\F_{\mathcal{H}}, \boxdot)$ studied in Theorem~\ref{TbiggroupsF}
(with the parallel group structure $\boxdot$, and with $\F \subseteq
\C$).
\end{itemize}

The third class of groups have nontrivial torsion elements, hence do not
embed in any Banach space. As we now explain, a mass of probability
inequalities nevertheless applies to these groups (and to the other two
group-classes too). The following result expands on Theorem~\ref{Tprob}.

\begin{theorem}\label{Tprob2}
Fix $\beps \in \{ \pm 1 \}^\infty$, and let $G$ be any of the following
abelian groups:
\begin{itemize}
\item A subgroup of $(LPM^\F_{\mathcal{H}}(\beps), \circledast)$, with
$\F \subseteq \C$ and $\F \cap (0,\infty)$ closed under positive square
roots.
\item A separable subgroup of $(LPM^\F(\beps), \circledast)$ equipped
with a norm, where $\F$ is as in Theorem~\ref{Trealclosed}.
\item A separable subgroup of $(LPM^\F_{\mathcal{H}}, \boxdot)$, with the
metric $d_p$ for some $p \in [1,\infty]$. Here, $\F \subseteq \C$ and $\F
\cap (0,\infty)$ is closed under positive square roots.
\end{itemize}

Now fix $z_1 \in G$ and independent random variables $X_1, \dots,
X_n \in L^0(\Omega,G)$, with $\mu$ a probability measure on $\Omega$. Set
$S_k := X_1 + \cdots + X_k$, with `$+$' denoting either $\circledast$ or
$\boxdot$. Also define
\begin{equation}
U_n := \max_{1 \leq k \leq n} d_G(z_1, S_k), \qquad
M_n := \max_{1 \leq k \leq n} d_G(\Id_G, X_k),
\end{equation}
where $d_G$ is the bi-invariant metric on $G$.
Then the following stochastic inequalities hold in $G$:
\begin{enumerate}[$(1)$]
\item \underline{\em Mogul'skii inequalities.}
Fix $a,b,c \in [0,\infty)$. If $1 \leq m \leq n$, then:
\begin{align}
& \bp{\min_{m \leq k \leq n} d_G(z_1, S_k) \leq a} \cdot \min_{m
\leq k \leq n} \bp{ d_G(S_k, S_n) \leq b} \leq \bp{ d_G(z_1, S_n)
\leq a + b},\\
& \bp{\max_{m \leq k \leq n} d_G(z_1, S_k) \geq a} \cdot \min_{m
\leq k \leq n} \bp{ d_G(S_k, S_n) \leq b} \leq \bp{ d_G(z_1, S_n)
\geq a - b}.
\end{align}

\item \underline{\em Ottaviani--Skorohod inequality.}
Fix $\alpha, \beta \in (0,\infty)$. Then:
\begin{equation}
\bp{\max_{1 \leq k \leq n} d_G(z_1, S_k) \geq \alpha  +\beta}
\cdot \min_{1 \leq k \leq n} \bp{ d_G(S_k, S_n) \leq \beta} \leq
\bp{ d_G(z_1, S_n) \geq \alpha}.
\end{equation}

\item \underline{\em L\'evy--Ottaviani inequality.} 
For $a \geq 0$, define
$p_a := \max_{1 \leq k \leq n} \bp{d_G(z_1, S_k) > a}$.
Then
\begin{equation}
\bp{U_n > a_1 + \dots + a_l} \leq \sum_{i=2}^l p_{a_i} + p'_l, \qquad
\forall l \geq 2, \ a_1, \dots, a_l \geq 0,
\end{equation}
where $p'_l := p_{a_1}$ if $l$ is odd, and $p'_l := \max_{1 \leq k \leq
n} \bp{d_G(S_k, S_n) > a_1}$ if $l$ is even.

\item \underline{\em Hoffmann-J{\o}rgensen inequality.} 
Fix integers $0 < k, n_1, \dots, n_k \in \mathbb{Z}$ and nonnegative
scalars $t_1, \dots, t_k, s \in [0,\infty)$, and define
$I_0 := \{ 1 \leqslant i \leqslant k : \bp{U_n \leqslant t_i}^{n_i -
\delta_{i1}} \leqslant \frac{1}{n_i!} \}$,
where $\delta_{i1}$ denotes the Kronecker delta.
Now if $\sum_{i=1}^k n_i \leqslant n+1$, then
\begin{align}\label{Ehj}
&\ \bp{U_n > (2 n_1 - 1) t_1 + 2 \sum_{i=2}^k n_i t_i + \left(
\sum_{i=1}^k n_i - 1 \right) s}\\
\leqslant &\ \bp{M_n > s} +
\bp{U_n \leqslant t_1}^{{\bf 1}_{1 \notin I_0}} \prod_{i \in
I_0} \bp{U_n > t_i}^{n_i} \prod_{i \notin I_0} \frac{1}{n_i!} \left(
\frac{\bp{U_n > t_i}}{\bp{U_n \leqslant t_i}} \right)^{n_i}.\notag
\end{align}
For a stronger version, in terms of the order statistics of the $Y_j$,
see~\cite[Theorem~A]{KR1}.
\end{enumerate}
\end{theorem}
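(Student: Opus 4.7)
The plan is to observe that each of the four stochastic inequalities in Theorem~\ref{Tprob2} is a purely structural consequence of two properties of $G$: that it is a separable abelian group, and that $d_G$ is bi-invariant. Since all three classes of groups listed in the hypothesis enjoy both properties -- from Theorem~\ref{Ttower} (first class), Theorem~\ref{Trealclosed} (second class), and Theorem~\ref{TbiggroupsF}(2), where bi-invariance of $d_p$ was explicitly computed (third class) -- the proof reduces to the classical abelian-metric-group arguments for these inequalities.

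First I would record the measurability boilerplate. In each of the three settings the group operation ($\circledast$ or $\boxdot$) and the inverse are continuous, and $G$ is separable, so the partial sums $S_k$ are Borel-measurable $G$-valued random variables, and $U_n, M_n$ are measurable real-valued random variables (suprema over finite index sets of continuous functionals). The single algebraic identity used repeatedly is bi-invariance: for each $k$,
\begin{equation*}
d_G(S_k, S_n) \; = \; d_G(\Id_G, X_{k+1} + \cdots + X_n),
\end{equation*}
so the ``increment from time $k$ to time $n$'' is an independent $G$-valued sum whose distribution depends only on $X_{k+1}, \dots, X_n$ and whose distance from $\Id_G$ has the distribution appearing on the right-hand side of the inequalities.

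Next I would prove the Mogul'skii and Ottaviani--Skorohod inequalities from first principles via the standard first-passage decomposition. Partition the event $\{\max_{m \leq k \leq n} d_G(z_1, S_k) \geq a\}$ (respectively $\{\min_{m \leq k \leq n} d_G(z_1, S_k) \leq a\}$) according to the first index $\tau = k$ for which the bound is attained. On $\{\tau = k\}$ the variable $S_k$ is a function of $(X_1, \dots, X_k)$ and is therefore independent of the increment; combining the triangle inequality $d_G(z_1, S_n) \leq d_G(z_1, S_k) + d_G(S_k, S_n)$ (and its reverse form $d_G(z_1, S_n) \geq d_G(z_1, S_k) - d_G(S_k, S_n)$) with the uniform lower bound $\min_k \bp{d_G(S_k, S_n) \leq b}$ and conditioning on $\tau$ yields the two Mogul'skii bounds. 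The Ottaviani--Skorohod inequality is the same argument with $a = \alpha + \beta$, $b = \beta$, over $m = 1$. The L\'evy--Ottaviani inequality is an iterated application of Ottaviani--Skorohod across the exponentially spaced thresholds $a_1, \dots, a_l$, with the parity distinction in $p'_l$ coming from whether the final decomposition step pairs $S_n$ with $z_1$ or with an intermediate $S_k$.

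The main obstacle is the Hoffmann--J{\o}rgensen inequality~\eqref{Ehj}, whose combinatorics involving the partition $I_0 \sqcup I_0^c$ and the products of $\bp{U_n > t_i}$-terms and reciprocal factorials $1/n_i!$ is substantially more delicate than the preceding three. Here I would appeal directly to \cite[Theorem~A]{KR1}: inspection of that proof shows it never uses anything beyond the abelian group law, the triangle inequality, bi-invariance of $d_G$, independence, and separability-based measurability. Consequently, once $G$ has been verified to be a separable abelian metric group with bi-invariant metric (done in the first paragraph), the argument in \cite{KR1} transfers verbatim and yields~\eqref{Ehj} uniformly across all three classes of $G$.
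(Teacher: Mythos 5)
Your core observation is exactly the one the paper uses: once each of the three listed groups $G$ has been recognized as a separable abelian metric group with bi-invariant metric, the four inequalities are structural facts that do not see the matrix-theoretic origins of $G$ at all. The verification across the three bullet points (via Theorems~\ref{Ttower}, \ref{Trealclosed}, and \ref{TbiggroupsF}(2), respectively) is also how the paper proceeds.

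Where you diverge is in how much you prove versus cite. The paper treats \emph{all four} inequalities as black boxes, invoking~\cite{KR1,KR2} (which establish them over separable metric \emph{semigroups} with bi-invariant metric, a strictly weaker hypothesis than your ``abelian group'') and~\cite{Khare} (metric monoids with left-invariant metric), and then just substitutes $z_0 \mapsto \Id_G$, $z_1 \mapsto z_1 \circledast (z_0)^{-1}_{\circledast}$ to match the formulation in the sources. You instead sketch first-passage derivations of parts (1)--(3) from scratch and reserve the citation to~\cite[Theorem~A]{KR1} only for the Hoffmann--J{\o}rgensen inequality. Your sketch of Mogul'skii and Ottaviani--Skorohod via decomposition on the first crossing time $\tau$ plus the triangle inequality and the increment identity $d_G(S_k,S_n)=d_G(\Id_G, X_{k+1}+\cdots+X_n)$ is the standard argument and is sound, but the L\'evy--Ottaviani form stated here (with an $l$-fold sum of thresholds and the parity-dependent $p'_l$) is the refined version from~\cite{KR2,Khare}; deriving it by ``iterated Ottaviani--Skorohod'' requires more bookkeeping than your one-sentence gloss suggests, and that is precisely where a black-box citation is safer. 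Both routes are valid; the paper's is shorter and incurs no risk from re-deriving a nontrivial combinatorial inequality, while yours is more self-contained for the easy cases at the cost of an underdeveloped sketch for part~(3).
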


As explained in~\cite{Khare,KR1}, Theorem~\ref{Tprob2}(4) firstly
goes beyond the previously known versions, even in the most special case
of $\R$;
it moreover strengthens and unifies previous results in the Banach space
literature by Kahane, Hoffmann-J{\o}rgensen, Johnson--Schechtman,
Klass--Nowicki, and Hitczenko--Montgomery-Smith;
and it also extends these results from Banach spaces to arbitrary metric
semigroups with a bi-invariant metric -- in particular, $G$.
The other inequalities also generalize and strengthen previously known
variants in the literature, beyond Banach spaces.

\begin{proof}
These results are, in a sense, black boxes. Namely, they were shown
in~\cite{KR1,KR2} in the more general setting of (a)~separable metric
semigroups with a bi-invariant metric; and in~\cite{Khare} for the
strictly more general setting of (b)~separable metric monoids with a
left-invariant metric. As every subgroup $G$ in Theorem~\ref{Tprob2} fits
into~(a) and hence~(b), the results apply here. Moreover, since $d_G$ is
translation-invariant, inside the expression $d_G(z_1, z_0 W)$ in all
results above we can -- and do -- replace $z_1$ by $z_1-z_0 := z_1
\circledast (z_0)_\circledast^{-1}$ and $z_0$ by the identity $\Id_G$, in
the abelian group $(G, \circledast)$ in the cited versions. Similarly by
$z_1 \boxdot (z_0)^{-1}_\boxdot$ in $(G,\boxdot)$.
\end{proof}

The proof of the next result uses the aforementioned Ottaviani--Skorohod
inequality.

\begin{theorem}[L\'evy's Equivalence]\hfill
\begin{enumerate}[$(1)$]
\item Fix $n \geq 1$ and $\epsilon \in \{ \pm 1 \}^n$, and let $\F = \R$
or $\C$. Let $G = LPM_n^\F(\epsilon)$ or $TPM_n^\F(\epsilon)$, and let
$X_k$ be independent $G$-valued random variables. Then the partial sums
$S_m = X_1 \circledast \cdots \circledast X_m$ converge almost surely to
a $G$-valued random variable $X$, if and only if they converge to $X$ in
probability.

\item The same holds for variables $X_k$ taking values in $(LPM^\F_n,
\boxdot)$ or $(TPM^\F_n, \boxdot)$.

\item If either sequence $S_n$ does not converge in the above manner,
then it diverges almost surely.
\end{enumerate}
\end{theorem}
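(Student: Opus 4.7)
The plan is to prove all three parts together via three ingredients: the Ottaviani--Skorohod inequality of Theorem~\ref{Tprob2}(2) (already established for each group $G$ in the statement), completeness of $G$ under its bi-invariant metric $d_G$, and Kolmogorov's $0$--$1$ law. Each $G$ in the statement is complete under $d_G$: by Theorem~\ref{Teuclidean} in the $\circledast$ cases (where $G$ is isometric to a Euclidean space) and by Theorem~\ref{Tbiggroup} in the $\boxdot$ cases. The direction ``a.s.\ convergence $\Rightarrow$ convergence in probability'' is the standard measure-theoretic fact, needing nothing beyond $d_G$.

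For the converse, suppose $S_m \to X$ in probability. Fix $N \geq 1$ and set $T_j := X_{N+1} + \cdots + X_{N+j}$, with $+$ meaning $\circledast$ or $\boxdot$; by independence, $(T_j)_{j \geq 1}$ is itself a partial-sum sequence of independent $G$-valued variables. Bi-invariance of $d_G$ yields $d_G(S_N, S_{N+j}) = d_G(\Id_G, T_j)$ and $d_G(T_j, T_n) = d_G(S_{N+j}, S_{N+n})$, and Theorem~\ref{Tprob2}(2) applied to $T_1, \ldots, T_n$ with $z_1 = \Id_G$ gives, for any $\alpha, \beta > 0$,
\[
\mu\!\Bigl(\max_{1 \leq j \leq n} d_G(\Id_G, T_j) \geq \alpha + \beta\Bigr) \cdot \min_{1 \leq j \leq n} \mu\bigl(d_G(T_j, T_n) \leq \beta\bigr) \leq \mu\bigl(d_G(\Id_G, T_n) \geq \alpha\bigr).
\]
For a target tolerance $\delta > 0$, I would choose $N$ large enough that $\mu(d_G(S_p, S_q) > \beta) < 1/2$ for all $p, q \geq N$ (which forces the min-factor above $1/2$) and that $\mu(d_G(S_N, X) \geq \alpha/2) < \delta$; then using continuity of $d_G$ together with a routine triangle-inequality and $\limsup$ argument, the right-hand side above is $< 2\delta$ for all large $n$, so $\mu(\sup_{j \geq 1} d_G(S_N, S_{N+j}) \geq \alpha + \beta) \leq 4\delta$ in the limit $n \to \infty$. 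Extracting a subsequence along which these tail events have summable probabilities and applying Borel--Cantelli shows $(S_m)$ is almost surely Cauchy in $G$; completeness then produces an a.s.\ $G$-valued limit, which must agree with $X$ since the two agree in probability.

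Finally, I would invoke Kolmogorov's $0$--$1$ law for part~$(3)$. For $p, q > N$, bi-invariance makes $d_G(S_p, S_q)$ measurable with respect to $\sigma(X_{N+1}, X_{N+2}, \ldots)$, so the event
\[
\{(S_m) \text{ is Cauchy in } G\} \;=\; \bigcap_{k \geq 1} \bigcup_{N \geq 1} \Bigl\{ \sup_{p, q \geq N} d_G(S_p, S_q) < 1/k \Bigr\}
\]
is a tail event and thus has probability $0$ or $1$; combined with completeness and the first two parts, this yields the trichotomy. The main obstacle is the passage from convergence in probability to almost-sure Cauchy-ness in the Ottaviani--Skorohod step---this is the classical L\'evy argument for Banach spaces, and what makes it transfer verbatim to the present, possibly torsion-containing abelian metric group setting is exactly that $d_G$ is bi-invariant and $G$ is complete, both of which the earlier sections have secured.
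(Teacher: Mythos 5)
Your proposal is correct, but it takes a more explicit route than the paper, which proves this result by pure citation: the paper observes that each group here is a complete separable metric group with a bi-invariant metric and then invokes Tortrat (and, in the metric-semigroup generality, the Khare--Rajaratnam papers), leaving the Ottaviani--Skorohod $\Rightarrow$ L\'evy-equivalence implication inside the black box. What you have done is open that box: you run the classical Banach-space argument (Ottaviani--Skorohod plus a Borel--Cantelli extraction to pass from Cauchy-in-probability to a.s.\ Cauchy, then completeness to extract the a.s.\ limit, and Kolmogorov's $0$--$1$ law for the trichotomy in part~(3)), and you correctly identify that the two structural features making this transfer verbatim are bi-invariance of $d_G$ and completeness. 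Your argument is sound: the application of Theorem~\ref{Tprob2}(2) to the shifted partial sums $T_j = X_{N+1}\circledast\cdots\circledast X_{N+j}$, the use of bi-invariance to write $d_G(S_N, S_{N+j}) = d_G(\Id_G, T_j)$ and $d_G(T_j,T_n)=d_G(S_{N+j},S_{N+n})$, the choice of $N$ forcing the min-factor above $1/2$, and the tail-event observation that $d_G(S_p,S_q)$ for $p,q\geq N$ depends only on $X_{N+1}, X_{N+2},\ldots$, are all accurate.

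Two small points worth tightening. First, you say completeness in the $\circledast$ cases comes from Theorem~\ref{Teuclidean}; for $\F=\C$ you actually need Theorem~\ref{Thermitian}(4) (and likewise for the complex $\boxdot$ cases one uses Theorem~\ref{TbiggroupsF}(3)). Second, separability of $G$ --- which the paper secures in Theorems~\ref{Teuclidean}, \ref{Thermitian}, and~\ref{Tbiggroup} --- deserves to be invoked alongside completeness, since it is what underwrites measurability of events like $\{\sup_{j\geq 1} d_G(S_N,S_{N+j}) \geq \alpha+\beta\}$ and the very meaning of $G$-valued random variables; you mention it once in passing but lean much harder on completeness. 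In net: correct proof, more self-contained than the paper's, at the cost of re-establishing a classical implication the paper is content to outsource.
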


This result was also shown in~\cite{Khare,KR2} in the general
settings~(a) and~(b) (as mentioned in the proof of Theorem~\ref{Tprob2})
-- for $G$ a complete separable metric semigroup with a bi-invariant
metric. But in the present setting, $G$ is a complete separable metric
group, in which case the result was already proved earlier, by
Tortrat~\cite{Tortrat}. It too extends previous results in the Banach
space probability literature, by It\^o--Nisio and by
Hoffmann-J{\o}rgensen--Pisier.

\section{(Inverse) Wishart and Cholesky-normal densities on LPM and TPM
cones}\label{Sstat}

Throughout this final section, we will work over $\F = \R$; thus, we
write $LPM_n(\epsilon) = LPM_n^\R(\epsilon)$ etc. Now recall
Definition~\ref{Dwishart}, which explained how to transfer a probability
density from the cone $PD_n$ to any $LPM_n(\epsilon)$ or
$TPM_n(\epsilon)$. We use this to take a closer look at the (inverse)
Wishart densities, and in particular, prove Theorem~\ref{Twishart}.

We begin with the classical Wishart distribution~\cite{Wishart} (see also
Fisher~\cite{Fisher} when $n=2$). Given integers $N \geq n \geq 1$ and a
fixed matrix $\Sigma \in PD_n$, one says a random positive definite
matrix ${\bf M_1} \sim W_n(\Sigma, N)$ if it has density
\begin{equation}\label{Ewishart}
f_{\Sigma,N}({\bf M_1}) := \frac{1}{2^{nN/2} \Gamma_n(N/2)
\det(\Sigma)^{N/2}} \det({\bf M_1})^{(N-n-1)/2} \exp \left( - {\rm
tr}(\Sigma^{-1} {\bf M_1})/2 \right)
\end{equation}
for ${\bf M_1} \in PD_n$, and $0$ otherwise, with $\Gamma_n$ the
multivariate gamma function. Now Definition~\ref{Dwishart} transfers this
to multivariate cones -- which we explicitly write out to set notation:

\begin{defn}
A random matrix ${\bf M} \sim W_n^{LPM}(\epsilon,\Sigma,N)$ if it has
density supported on $LPM_n^\R(\epsilon)$ and given there by
\begin{equation}\label{ElpmWishart}
f^{LPM}_{\epsilon,\Sigma,N}(L \D_\epsilon L^T) :=
f_{\Sigma,N}(L L^T),
\qquad \text{i.e.,} \qquad f^{LPM}_{\epsilon,\Sigma,N}({\bf M}) :=
f_{\Sigma,N}( \Phi_{\Id_n} \circ \Phi_{\D_\epsilon}^{-1}({\bf M})).
\end{equation}

Similarly, the Wishart density $W_n^{TPM}(\epsilon, \Sigma, N)$ is given
by its density function:
\begin{equation}\label{ETPMdensity}
f_{\epsilon,\Sigma,N}^{TPM}(K^T \cdot P_n \D_\epsilon P_n \cdot K) :=
f_{\Sigma,N}(K^T K) = f_{\Sigma,N}(\Phi^{\Id_n} \circ (\Phi^{P_n
\D_\epsilon P_n})^{-1}(B)),
\end{equation}
where $P_n$ is as in Theorem~\ref{Ttpm}, $B := K^T \cdot P_n \D_\epsilon
P_n \cdot K$, and the map $\Phi^{C_\epsilon}$ is as in
Theorem~\ref{Ttpm}.
\end{defn}

With these definitions at hand, we proceed towards
Theorem~\ref{Twishart}. In it, since $\Phi_{\D_\epsilon}$ sends a matrix
$L$ (with $\binom{n+1}{2}$ degrees of freedom) to a symmetric matrix $A
:= L \D_\epsilon L^T$ (with the same degrees of freedom), we order the
input and output coordinates in lexicographic order:
\begin{align}\label{Elexico}
\begin{aligned}
(l_{11}; \ l_{21}, l_{22}; &\ \ \dots; \ l_{n1}, \dots, l_{nn}),\\
(a_{11}; \ a_{21}, a_{22}; &\ \ \dots; \ a_{n1}, \dots, a_{nn}).
\end{aligned}
\end{align}
We also use its reverse -- the ``reverse-lexicographic order''. Now we
perform Jacobian calculations.

\begin{prop}\label{Pjacobian}
Set $\epsilon_0 := 1$. The Jacobian of $\Phi_{\D_\epsilon}$ at any $L \in
{\bf L}_n^\R$ is lower triangular in the lexicographic
order~\eqref{Elexico}, and its determinant equals
\begin{equation}\label{Ejacobian}
\det \left( \frac{\partial \Phi_{\D_\epsilon}(L)_{ij}}{\partial l_{i'j'}}
\right)_{\substack{1 \leq j \leq i \leq n\\1 \leq j' \leq i' \leq n}} = \
2^n \prod_{j=1}^n (l_{jj} \epsilon_{j-1} \epsilon_j)^{n+1-j}.
\end{equation}

Similarly, given $L' = (l'_{ij}) \in {\bf L}_n^\R$, the Jacobian of
$\Phi^{\D_{\epsrev}}$ at $\rev{L'} \in {\bf L}_n^\R$ is upper triangular,
with
\begin{equation}
\det \left( \frac{\partial \Phi^{\D_{\epsrev}}(\rev{L'})_{i'j'}}{\partial
\rev{L}} \right)_{\substack{1 \leq j' \leq i' \leq n\\1 \leq j \leq i
\leq n}} = \ 2^n \prod_{j=1}^n (l'_{n+1-j,n+1-j} \epsilon_{j-1}
\epsilon_j)^{n+1-j},
\end{equation}
where $\partial \rev{L}$ means the reverse-lexicographic order.
Moreover, the two Jacobians are related via:
\begin{equation}\label{Ejacobians}
\left. \frac{\partial \Phi^{\D_{\epsrev}} }{\partial \rev{L}}
\right|_{\rev{L'}} \ = \ P_{\binom{n+1}{2}} \cdot \left.
\frac{\partial \Phi_{\D_\epsilon}}{\partial L} \right|_{L'} \cdot
P_{\binom{n+1}{2}}.
\end{equation}
\end{prop}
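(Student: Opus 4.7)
The plan is to obtain the first Jacobian by direct entrywise differentiation of the quadratic map $L \mapsto L\D_\epsilon L^T$, read off its triangular shape in lex coordinates, and multiply the diagonal entries; the second Jacobian then follows almost for free from the commuting square~\eqref{Erevsq} of Theorem~\ref{Ttpm}, which realises $\Phi^{\D_{\epsrev}}$ as a conjugate of $\Phi_{\D_\epsilon}$ by the (linear, involutive) reversal maps on ${\bf L}_n^\R$ and $\symm_n$.

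For the first part, I would expand $(L\D_\epsilon L^T)_{ij} = \sum_{k=1}^{\min(i,j)} \epsilon_{k-1}\epsilon_k\, l_{ik} l_{jk}$ for $j\le i$, and observe that the only variables appearing in this expression are $\{\,l_{ik},\, l_{jk} : k\le j\,\}$. Hence $\partial (L\D_\epsilon L^T)_{ij}/\partial l_{i'j'}$ vanishes unless $j'\le j$ and $i'\in\{i,j\}$. Ordering both input and output index-pairs in the lex order~\eqref{Elexico}, these two constraints together exclude $(i',j')$ strictly succeeding $(i,j)$: if $i'>i$ then $i'=j\le i$, a contradiction; if $i'=i$ then $j'\le j$. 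This gives lower-triangularity. The diagonal entry $\partial A_{ij}/\partial l_{ij}$ is $\epsilon_{j-1}\epsilon_j\, l_{jj}$ for $i>j$ and $2\epsilon_{j-1}\epsilon_j\, l_{jj}$ for $i=j$; grouping the $n+1-j$ diagonal entries sharing column-index $j$ (exactly one of which supplies the factor~$2$) produces~\eqref{Ejacobian}.

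For the second part, the commuting square~\eqref{Erevsq} reads $\Phi^{\D_{\epsrev}}\circ\tau = R\circ\Phi_{\D_\epsilon}$ on ${\bf L}_n^\R$, where $\tau(L):=P_n L^T P_n = \rev{L}$ and $R(A):=P_n A P_n = \rev{A}$ on $\symm_n$ are linear involutions. Differentiating and using that the derivative of a linear map equals the map itself gives
\[
D_{\rev{L'}}\Phi^{\D_{\epsrev}} \;=\; R\,\circ\,(D_{L'}\Phi_{\D_\epsilon})\,\circ\,\tau,
\]
which is~\eqref{Ejacobians} once one notes that in the lex-ordered bases of ${\bf L}_n^\R$ and of $\symm_n$, both $\tau$ and $R$ are realised by the same involutive permutation matrix $P_{\binom{n+1}{2}}$ sending $(i,j)\mapsto (n+1-j,\,n+1-i)$ on lower-triangular positions (a one-line check from $(P_n L^T P_n)_{ab}=l_{n+1-b,n+1-a}$ and the symmetry of $A$). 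Conjugation by this involutive permutation matrix converts the lower-triangular matrix of the first part into an upper-triangular one in the reversed order (equivalently, upper-triangular as stated after relabelling the input index set as $\partial\rev{L}$), while leaving the determinant unchanged. Re-expressing the diagonal entries $l_{jj}$ as diagonal entries of $\rev{L'}$ then yields the claimed closed form.

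The main obstacle is purely combinatorial bookkeeping in the direct step: correctly identifying the set of $l_{i'j'}$ that occur in $(L\D_\epsilon L^T)_{ij}$ and handling the case $i=j$ (where both factors of $l_{jj}$ coincide, producing the $2^n$). The conjugation argument is conceptually short, but requires the one-time verification that $\tau$ and $R$ implement the \emph{same} index involution in lex coordinates; once that is done, triangularity-reversal and determinant-preservation follow immediately from $P_{\binom{n+1}{2}}^2 = \Id$.
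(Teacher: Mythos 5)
Your first part is correct and follows the same route as the paper: expand $(L\D_\epsilon L^T)_{ij}=\sum_{k=1}^{j}\epsilon_{k-1}\epsilon_k\,l_{ik}l_{jk}$, read off which $l_{i'j'}$ occur, deduce lower‐triangularity in the lex order, and multiply the diagonal entries $\epsilon_{j-1}\epsilon_j l_{jj}$ (with the extra factor $2$ precisely when $i=j$) to get $2^n\prod_j(\epsilon_{j-1}\epsilon_j l_{jj})^{n+1-j}$. The triangularity case‐split is handled correctly.

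The second half contains a genuine gap. Your derivation $D_{\rev{L'}}\Phi^{\D_{\epsrev}}=R\circ D_{L'}\Phi_{\D_\epsilon}\circ\tau$ from the commuting square is fine, and you correctly observe that on lower–triangular positions both $\tau$ and $R$ implement the index involution $(i,j)\mapsto(n+1-j,\,n+1-i)$. The error is the final identification: this involution is \emph{not} the reversal of the lex order, i.e.\ its matrix in the lex basis is \emph{not} $P_{\binom{n+1}{2}}$, except when $n\le 2$. For $n=3$ the lex order on $\{(i,j):i\ge j\}$ is $(1,1),(2,1),(2,2),(3,1),(3,2),(3,3)$: the anti-diagonal $P_6$ swaps positions $3\leftrightarrow 4$, i.e.\ $(2,2)\leftrightarrow(3,1)$, whereas $(i,j)\mapsto(4-j,4-i)$ fixes both $(2,2)$ and $(3,1)$. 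So the conjugating matrix $\sigma$ in your argument is not $P_{\binom{n+1}{2}}$, and the relation you actually prove is $J'=\sigma J\sigma$ rather than~\eqref{Ejacobians}.

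A quick sanity check in fact shows that something is off at the level of the stated formulas themselves. Since $\sigma$ is a permutation, $\det J'=\det J$ always; yet the displayed closed form for $\det\bigl(\partial\Phi^{\D_{\epsrev}}/\partial\rev{L}\bigr)$, namely $2^n\prod_j(l'_{n+1-j,n+1-j}\epsilon_{j-1}\epsilon_j)^{n+1-j}$, does not equal $2^n\prod_j(l'_{jj}\epsilon_{j-1}\epsilon_j)^{n+1-j}$ unless the diagonal of $L'$ is palindromic. Concretely, for $n=2$, $\epsilon=\mathbf 1$, $L'=\bigl(\begin{smallmatrix}2&0\\1&3\end{smallmatrix}\bigr)$, both Jacobian determinants are $48$, while the displayed formula gives $4\cdot(l'_{22})^2\cdot l'_{11}=72$. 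So you should (a) replace $P_{\binom{n+1}{2}}$ by the permutation matrix $\sigma$ of the involution $(i,j)\mapsto(n+1-j,n+1-i)$ in your conjugation identity, and (b) sort out the exact ordering conventions intended in the statement of~\eqref{Ejacobians} before claiming to have proved it, since as literally written it does not hold for $n\ge 3$.
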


\begin{proof}
We begin with~\eqref{Ejacobian}. For this, we claim that for any $n \geq
i \geq j \geq 1$, the expression $a_{ij} = (L \D_\epsilon L^T)_{ij}$ does
not depend on $l_{ik}, k>j$ or on $l_{hj}, h>i$. This is clear by direct
computation:
\begin{equation}\label{Ecompute}
a_{ij} = \sum_{k=1}^j l_{ik} \epsilon_{k-1} \epsilon_k l_{jk}.
\end{equation}

Thus the Jacobian on the left side of~\eqref{Ejacobian} is lower
triangular under the lexicographic order~\eqref{Elexico}. Moreover,
$\partial a_{ij} / \partial l_{ij}$ equals $\epsilon_{j-1} \epsilon_j
l_{jj}$ if $i>j$, and $2 \epsilon_{j-1} \epsilon_j l_{jj}$ if $i=j$.
Multiplying these yields~\eqref{Ejacobian}.

The next assertion is shown similarly. Finally,~\eqref{Ejacobians}
involves straightforward, albeit somewhat tedious, bookkeeping. We omit
the details.
\end{proof}

With Proposition~\ref{Pjacobian} at hand, we explain our final main
result.

\begin{proof}[Proof of Theorem~\ref{Twishart}]\hfill
\begin{enumerate}[(1)]
\item We begin by showing~\eqref{Echangeofvar}.
(We do not prove the TPM counterpart of~\eqref{Echangeofvar} as it is
similar.)
First, for a random variable ${\bf M} = L \D_\epsilon L^T$, define
\[
{\bf M_1} :=  L L^T = \Phi_{\Id_n} \circ \Phi_{\D_\epsilon}^{-1}({\bf M})
\in PD_n.
\]
Then the right-hand side of~\eqref{Echangeofvar} equals
$\displaystyle \int_{{\bf M_1} \in \Phi_{\Id_n} \circ
\Phi_{\D_\epsilon}^{-1}(\mathscr{A})} f_Q({\bf M_1})\, d {\bf M_1}$,
where $d {\bf M_1}$ is the Lebesgue measure on the sub-cone $PD_n$ of $n
\times n$ real symmetric matrices.

Now change variables from ${\bf M_1}$ to ${\bf M}$. This changes the
domain of integration to $\mathscr{A}$, and the integrand can be
rewritten (by definition) as $f^{LPM}_{\epsilon,Q}({\bf M})$. As the
absolute value of the Jacobian of $\Phi_{\Id_n} \circ
\Phi_{\D_\epsilon}^{-1}$ (and of its inverse) is $1$ by
Proposition~\ref{Pjacobian} (and the Chain Rule), we
obtain~\eqref{Echangeofvar}.

\item This is essentially the preceding part, specialized to $Q =
W_n(\Sigma,N)$.

\item We omit these details.

\item We next turn to~\eqref{Ewishartrev}, where we only prove the
forward implication, as the proof of its converse is similar. Note that
the reversal map is Lebesgue measurable; thus by~\eqref{Echangeofvar} for
$Q = W_n(\Sigma,N)$, and for measurable $\mathscr{B} \subseteq
TPM_n(\epsilon)$,
\begin{align}\label{ETPMwishart}
&\ \bbp \left( \rev{\bf M} = \rev{L}^T \D_{\epsrev} \rev{L} \in
\mathscr{B} \ \middle| \ {\bf M} \sim W_n^{LPM}(\epsilon, \Sigma, N)
\right)
= \bbp( {\bf M} = L \D_\epsilon L^T \in P_n \mathscr{B} P_n)\notag\\
= &\ \int_{{\bf M_1} = L L^T \in \Phi_{\Id_n} \circ
\Phi_{\D_\epsilon}^{-1}( P_n \mathscr{B} P_n)} f_{\Sigma,N}({\bf
M_1}) \, d {\bf M_1}\notag\\
= &\ \frac{2^{-nN/2}}{\Gamma_n(N/2)} \int_{{\bf M_1} = L L^T \in
\Phi_{\Id_n} \circ \Phi_{\D_\epsilon}^{-1}( P_n \mathscr{B} P_n)}
\frac{\det({\bf M_1})^{(N-n-1)/2}}{\det(\Sigma)^{N/2}} \exp \left( {\rm
tr} (- \Sigma^{-1} {\bf M_1})/2 \right) \, d {\bf M_1}.
\end{align}

Change variables from ${\bf M_1} = L L^T$ to
$\overset{\longleftarrow}{\bf M_1} = P_n L L^T P_n = \rev{L^T} \rev{L}$.
Adjoin to the left of the commuting square~\eqref{Erevsq} the analogous
square (reversed) for $\epsilon = {\bf 1}_n$. As all maps in these
squares are bijections by Theorems~\ref{Tlpm} and~\ref{Ttpm}, we can
change variables in the domain in the above integral:
\[
{\bf M_1} = \Phi_{\Id_n}(L) \in \Phi_{\Id_n} \circ
\Phi_{\D_\epsilon}^{-1}( P_n \mathscr{B} P_n) \quad \Longleftrightarrow
\quad \Phi_{\D_\epsilon}(L) \in P_n \mathscr{B} P_n.
\]
Applying the reversal map and using~\eqref{Erevsq}, this is if and only
if $\Phi^{\D_{\epsrev}}(\rev{L}) = \rev{\bf M_1} \in \mathscr{B}$.

This changes the domain. The Jacobian of the transformation ${\bf M_1}
\mapsto \overset{\longleftarrow}{\bf M_1}$ is a permutation matrix, hence
unimodular. Finally, we change the density itself: $\det {\bf M_1} = \det
\overset{\longleftarrow}{\bf M_1}$, and
\[
{\rm tr}(-\Sigma^{-1} {\bf M_1}/2) = 
{\rm tr}(-P_n \Sigma^{-1} {\bf M_1} P_n/2) = 
{\rm tr}(- \rev{\Sigma}^{-1} \rev{\bf M_1} /2).
\] This suggests the Wishart density with parameters $N,\rev{\Sigma}$.
And indeed, the remaining factor is $\det(\Sigma)^{-N/2} =
\det(\rev{\Sigma})^{-N/2}$.

Thus, we can now continue the calculation in~\eqref{ETPMwishart}, via
changing variables $: {\bf M_1} \to \rev{\bf M_1}$:
\begin{align*}
&\ \bbp_{{\bf M} \sim W_n^{LPM}} \left( \rev{\bf M} = \rev{L}^T
\D_{\epsrev} \rev{L} \in \mathscr{B} \right)\notag\\
= \cdots = &\ \frac{2^{-nN/2}}{\Gamma_n(N/2)} \int_{\rev{\bf M_1} =
\rev{L^T} \rev{L} \in \Phi^{\Id_n} \circ (\Phi^{\D_{\epsrev}})^{-1}( 
\mathscr{B} )} \frac{\det(\rev{\bf
M_1})^{(N-n-1)/2}}{\det(\rev{\Sigma})^{N/2}} \exp \left( {\rm tr} (-
\rev{\Sigma}^{-1} \rev{\bf M_1}) /2 \right) \, d
\overset{\longleftarrow}{\bf M_1}.
\end{align*}
As the right-hand side is the push-forward to $TPM_n^\R(\epsilon)$ of the
Wishart density with parameters $\rev{\Sigma},N$, and the equality holds
for all events $\mathscr{B} \subseteq TPM^\R_n(\epsilon)$, the proof is
complete.

\item The key observation is that if $g$ is any function, and ${\bf M} =
U D U^T$ where $U$ is orthogonal and $D$ is real diagonal, then $\rev{\bf
M} = P_n M P_n = (P_nU) D (P_nU)^T$. Now we compute, via using e.g.\
\cite[Section~1.2]{Higham-book} and since all matrices below in the
arguments of $g$ are real symmetric:
\[
g(\rev{\bf M}) = (P_nU) g(D) (P_nU)^T = P_n (U g(D) U^T) P_n = P_n g({\bf
M}) P_n = \overset{\leftarrow}{g}({\bf M}).
\]

If $\mathbb{E}[g({\bf M})]$ exists for ${\bf M} \sim
W_{\epsilon,n}^{LPM}(\Sigma,N)$, then compute via first
using~\eqref{Ewishartrev} and then changing variables $\rev{\bf M_1}
\leadsto {\bf M_1}$ opposite to the previous part (but still with
unimodular Jacobian):
\begin{align*}
\mathbb{E}[g(\rev{\bf M})] = &\ \int_{\rev{\bf M_1} \in PD_n} g(\rev{\bf
M}) f_{\rev{\Sigma},N}(\rev{\bf M_1}) \, d \rev{\bf M_1}
= \int_{{\bf M_1} \in PD_n} (P_n g({\bf M}) P_n) f_{\Sigma,N}({\bf M_1})
\, d {\bf M_1}\\
= &\ P_n \cdot \int_{{\bf M_1} \in PD_n} g({\bf M}) f_{\Sigma,N}({\bf
M_1}) \, d {\bf M_1} \cdot P_n = P_n \mathbb{E}[ g({\bf M}) ] P_n.
\end{align*}
This shows the integrability of $\mathbb{E}[ g(\rev{\bf M}) ]$ and
completes the proof.\qedhere
\end{enumerate}
\end{proof}

Given Theorem~\ref{Twishart}, one can ``glue together'' probability
distributions on cones for different $\epsilon$, via use of a weight-set
$(w_\epsilon)_{\epsilon \in \{ \pm 1 \}^n}$ to generate probability
distributions supported on $LPM_n^\R$ or on $TPM_n^\R$ -- which, we
remind the reader, are open and dense in all real symmetric matrices. We
will revisit this in Proposition~\ref{Pcloning} when talking about the
inertia of matrices in LPM/TPM cones.

\begin{remark}\label{Rnatural}
A natural question, given Theorem~\ref{Twishart}, again involves changing
variables between the ``usual'' Wishart density and one on a LPM/TPM cone
-- or more generally, between two probability densities. One not only
changes the differential via the Jacobian (which we did in
Proposition~\ref{Pjacobian}), but also the domain (via the map
$\Phi_{\Id_n} \circ \Phi_{\D_\epsilon}^{-1}$), and moreover, the density
itself. For this last task, one seeks to write $L L^T$ in terms of $A :=
L \D_\epsilon L^T$ -- or more generally:
\[
\widetilde{A} = L \D_\delta L^* \in LPM_n^\C(\delta)
\quad \text{in terms of} \quad A = L \D_\epsilon L^*, \qquad
\text{for } \delta, \epsilon \in \{ \pm 1 \}^n.
\]
This is given by first applying Algorithm~\ref{AlgCholesky} to $A$ and
obtaining $L$; followed by $L \mapsto \widetilde{A} = L \D_\delta L^T$.
Explicitly, we define sequentially for $k=1,\dots,n$:
\begin{equation}
\alpha_j^{(k)} := a_{jk} - \sum_{i=1}^{k-1} \frac{\alpha_j^{(i)}
\overline{\alpha_k^{(i)}} }{l_{ii}^2} \epsilon_{i-1} \epsilon_i, \qquad
\forall k \leq j \leq n,
\end{equation}
where $\epsilon_0 = 1$, and $l_{ii}^2 = \epsilon_{i-1} \epsilon_i \det
A_{[i][i]} / \det A_{[i-1][i-1]}$ is as in~\eqref{Eqsquare} or
Algorithm~\ref{AlgCholesky} with $B_\epsilon = \D_\epsilon$. Then
$\widetilde{A}$ is given by:
\begin{equation}\label{Efinal}
\widetilde{a}_{jk} = \sum_{i=1}^k \frac{\alpha_j^{(i)}
\overline{\alpha_k^{(i)}} }{l_{ii}^2} \delta_{i-1} \delta_i, \qquad
\forall 1 \leq k \leq j \leq n.
\end{equation}
Both $\alpha_j^{(k)}$ and $\widetilde{a}_{jk}$ are computed by induction
on $k$ via~\eqref{Ecompute}, or rather, its complex
generalization:
\begin{equation}
a_{jk} = \sum_{i=1}^k l_{ji} \epsilon_{i-1} \epsilon_i \overline{l_{ki}},
\qquad \forall 1 \leq k \leq j \leq n.
\end{equation}
\end{remark}

\subsection{Additional distributions}

We briefly discuss other probability distributions supported on $PD_n$
and their adaptations to LPM/TPM cones. The first was the Wishart family,
above.

\subsubsection{}
As a second example, one can define the \textit{complex Wishart
density}~\cite{Goodman1,Goodman2} on the cones $LPM_n^\C(\epsilon),
TPM_n^\C(\epsilon)$. We omit the definitions.

\subsubsection{}
The third distribution we mention is the inverse Wishart distribution. If
${\bf M_1} \sim W_n(\Sigma,N)$ then one can apply the change of variables
${\bf M_1} \mapsto {\bf X_1} := {\bf M_1}^{-1}$, and the resulting
density (including the Jacobian determinant) is written in terms of
$\Omega := \Sigma^{-1}$ as:
\begin{equation}
g_{\Omega,N}({\bf X_1}) := \frac{\det(\Omega)^{N/2}}{2^{nN/2}
\Gamma_n(N/2)} \det({\bf X_1})^{-(N+n+1)/2} \exp(-{\rm tr}(\Omega {\bf
X_1}^{-1})/2), \qquad {\bf X_1} \in PD_n.
\end{equation}

Recall by Proposition~\ref{Ptpm} that if ${\bf M}$ is supported on
$LPM_n^\R(\epsilon)$ then ${\bf X} = {\bf M}^{-1}$ is supported on
$TPM_n^\R(\epsrev)$. Thus, Definition~\ref{Dwishart},
Theorem~\ref{Twishart}, Proposition~\ref{Ptpm}, and~\eqref{Esanity}
combine to yield:

\begin{prop}
Fix integers $N \geq n \geq 1$ and a sign pattern $\epsilon \in \{ \pm 1
\}^n$.
\begin{enumerate}[$(1)$]
\item If ${\bf M} \sim W_n(\epsilon,\Sigma,N)$ on $LPM_n^\R(\epsilon)$,
then ${\bf M}^{-1} \sim W_n^{-1}(\epsrev,\Sigma^{-1},N)$ on
$TPM_n^\R(\epsrev)$.
\item The TPM analogue also holds.
\end{enumerate}
As a consequence,
$P_n {\bf M}^{-1} P_n = (\rev{\bf M})^{-1} \sim
W_n^{-1}(\epsrev,(\rev{\Sigma})^{-1},N)$ on $LPM_n^\R(\epsrev)$ in
part~(1).
\end{prop}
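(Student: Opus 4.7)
The strategy is to reduce each claim to the classical Wishart / inverse-Wishart relationship on $PD_n$ by tracking Cholesky-type parametrizations along the matrix-inverse map. The scaffolding is Proposition~\ref{Ptpm}, which identifies $A \mapsto A^{-1}$ as a smooth diffeomorphism $LPM_n^\R(\epsilon) \to TPM_n^\R(\epsrev)$, together with the elementary observation that ${\bf L}_n$ is closed under inversion ($L \in {\bf L}_n \Rightarrow L^{-1} \in {\bf L}_n$).

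For Part~(1), I would write ${\bf M} = L \D_\epsilon L^T$ with $L \in {\bf L}_n$ via Theorem~\ref{Tlpm}, set $K := L^{-1} \in {\bf L}_n$, and note the direct computation ${\bf M}^{-1} = L^{-T} \D_\epsilon L^{-1} = K^T \D_\epsilon K$, which by the scaffolding above already lies in $TPM_n^\R(\epsrev)$. For the distributional identification, Definition~\ref{Dwishart} makes the auxiliary ${\bf M_1} := LL^T = \Phi_{\Id_n}(L)$ a classical Wishart $W_n(\Sigma, N)$ on $PD_n$; hence ${\bf M_1}^{-1} = K^T K$ has the classical inverse-Wishart law $W_n^{-1}(\Sigma^{-1}, N)$ on $PD_n$, as recalled in the $g_{\Omega, N}$ density displayed just before the proposition. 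Since ${\bf M}^{-1} = K^T \D_\epsilon K$ and ${\bf M_1}^{-1} = K^T K$ share the same lower-triangular Cholesky parameter $K$, the TPM analogue of Definition~\ref{Dwishart} immediately identifies the law of ${\bf M}^{-1}$ as $W_n^{-1}(\epsrev, \Sigma^{-1}, N)$ on $TPM_n^\R(\epsrev)$.

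A more concrete alternative, which avoids any appeal to the pushforward-definition of $W_n^{-1,TPM}$, is a direct change of variables ${\bf M} \mapsto {\bf X} := {\bf M}^{-1}$: the Jacobian of matrix inversion on real symmetric matrices has absolute determinant $|\det {\bf X}|^{-(n+1)}$, which combines with the identity ${\rm tr}(\Sigma^{-1} {\bf M}) = {\rm tr}(\Sigma^{-1} {\bf X}^{-1})$ and $|\det \D_\epsilon| = 1$ to convert~\eqref{ElpmWishart} into exactly $g_{\Sigma^{-1}, N}(K^T K)$ at ${\bf X} = K^T \D_\epsilon K$. Part~(2) follows by the symmetric mirror argument on the TPM side, again using Proposition~\ref{Ptpm} and Theorem~\ref{Ttpm}.

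For the concluding consequence, I would invoke equation~\eqref{Ewishartrev} of Theorem~\ref{Twishart} to conclude $\rev{\bf M} \sim W_{\epsilon,n}^{TPM}(\rev{\Sigma}, N)$, then apply Part~(2) to this reversed matrix. Since ${\bf M}$ is real symmetric, reversal reduces to conjugation by the involution $P_n$, so $(\rev{\bf M})^{-1} = (P_n {\bf M} P_n)^{-1} = P_n {\bf M}^{-1} P_n$, yielding the stated identity; and Part~(2) then gives the distribution $W_n^{-1}(\epsrev, (\rev{\Sigma})^{-1}, N)$ on $LPM_n^\R(\epsrev)$ directly. The main obstacle throughout is ensuring that the two natural descriptions of each target law --- ``pushforward of the classical inverse-Wishart'' versus ``inverse of the LPM/TPM Wishart'' --- coincide; this compatibility is guaranteed by the commuting square~\eqref{Erevsq} specialized at $B_\epsilon = \D_\epsilon$, together with the identity $\rev{\D_\epsilon} = \D_{\epsrev}$ from Remark~\ref{Rsanity}.
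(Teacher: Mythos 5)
Your proposal is correct and follows the same route the paper intends: the paper gives no written proof of this proposition but merely states that it follows by combining Definition~\ref{Dwishart}, Theorem~\ref{Twishart}, Proposition~\ref{Ptpm}, and~\eqref{Esanity}, and your argument is precisely the fleshing-out of that combination. The core steps -- writing ${\bf M} = L\D_\epsilon L^T$, observing $\D_\epsilon^{-1} = \D_\epsilon$ so that ${\bf M}^{-1} = K^T\D_\epsilon K$ with $K := L^{-1} \in {\bf L}_n$, recognizing $\D_\epsilon \in TPM_n(\epsrev)$, and then passing the classical $W_n \leftrightarrow W_n^{-1}$ relation on $PD_n$ through the two probability-preserving Cholesky parametrizations $\Phi_{\D_\epsilon}$ and $\Phi^{\D_\epsilon}$ -- are exactly what the paper's cited results supply. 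Your ``more concrete alternative'' via the inversion Jacobian $|\det{\bf X}|^{-(n+1)}$ is an equally valid verification; it hinges on $|\det {\bf M}| = |\det(LL^T)|$ because $|\det \D_\epsilon| = 1$, which is worth stating explicitly if you pursue that route. The derivation of the concluding claim (applying~\eqref{Ewishartrev} and then Part~(2), and using $P_n$-conjugation to identify $(\rev{\bf M})^{-1}$) is likewise correct.

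One small stylistic note: where you say the identification is ``immediate'' from the shared parameter $K$, it is really the change-of-variable formula~\eqref{Echangeofvar} (and its TPM analogue from Theorem~\ref{Twishart}(3)) that guarantees the measures, not just the density formulas, match; you appeal to exactly the right tools, but the word ``immediate'' slightly undersells that this invariance of the measure under $\Phi_{\Id_n}\circ\Phi_{\D_\epsilon}^{-1}$ and $\Phi^{\Id_n}\circ(\Phi^{\D_\epsilon})^{-1}$ is where the Jacobian cancellation is hidden.
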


\subsubsection{}
The fourth family is that of PD-matrix-variate lognormal distributions of
type I~\cite[Section~2.4]{Armin}. This uses the result
in~\cite{VectorSpace} that the addition and scalar multiplications
\[
A' \odot B' := \exp( \log(A') + \log(B') ), \qquad \alpha .' A' :=
\exp(\alpha \log(A'))
\]
make $PD_n$ isomorphic to an $\R$-vector space (see
Remark~\ref{RVectorSpace}). Given this, one can transfer using
Definition~\ref{Dwishart} and Theorem~\ref{Twishart} this distribution to
the cones $LPM_n(\epsilon), TPM_n(\epsilon)$. One can then also create
densities via ${\bf M} \leadsto {\bf M}^{-1}$, $\rev{\bf M}$, or
$(\rev{\bf M})^{-1}$.

\subsubsection{Cholesky-normal densities}

Our fifth candidate is a novel family of densities defined on the PD cone
itself. This is a parallel construction to the preceding type-I
lognormal densities; for its definition we need the Euclidean space
isomorphism map $\eta$ defined in Theorem~\ref{Tinnerproduct}.

Fix $n \geq 1$ and suppose $A_1, \dots, A_m$ are i.i.d.\ $PD_n$-valued
random matrices, such that
\[
\mathbb{E} \left[ \eta \circ \Phi_{\Id_n}^{-1}( A_j ) \right]
= \eta \circ \Phi_{\Id_n}^{-1}(M_\circ),
\qquad {\rm Cov} \left( \eta \circ \Phi_{\Id_n}^{-1}( A_j ) \right) =
\widetilde{\Sigma} \in PD_{\binom{n+1}{2}}
\]
for some $M_\circ \in PD_n$.
Then by SLLN and CLT, their log-Cholesky average/barycentre satisfies:
\begin{align}
\begin{aligned}
\widehat{A}_m := \Phi_{\Id_n} \circ \eta^{-1} \left( \frac{1}{m}
\sum_{j=1}^m \eta \circ \Phi_{\Id_n}^{-1}(A_j) \right) &\
\overset{a.s.}{\longrightarrow} M_\circ, \\
\sqrt{m} \left( \eta \circ \Phi_{\Id_n}^{-1}(\widehat{A}_m) - \eta \circ
\Phi_{\Id_n}^{-1}(M_\circ) \right) &\ \Longrightarrow N({\bf 0},
\widetilde{\Sigma}).
\end{aligned}
\end{align}

Thus, we introduce:

\begin{defn}\label{Dcholeskynormal}
A $PD_n$-valued random variable ${\bf M_1}$ is said to have a
\textit{Cholesky-normal distribution} with parameters $M_\circ \in PD_n$
and $\widetilde{\Sigma} \in PD_{\binom{n+1}{2}}$, if $\eta \circ
\Phi_{\Id_n}^{-1}({\bf M_1}) \sim N( \eta \circ
\Phi_{\Id_n}^{-1}(M_\circ), \widetilde{\Sigma})$.
\end{defn}

\begin{remark}
Using Definition~\ref{Dwishart}, we transfer this density to every cone
$LPM_n(\epsilon), TPM_n(\epsilon)$.
\end{remark}

\subsubsection{}

The notion of the canonical geometric mean leads Schwartzman
in~\cite[Section~3.3]{Armin} to propose PD-matrix-variate lognormal
distributions of type II -- which one can again transfer to all cones
$LPM_n(\epsilon), TPM_n(\epsilon)$.

\medskip
Given the extensive work into studying the Wishart family on the cone
$PD_n$, it will be interesting to analyze in depth the above densities
over parallel and larger cones, with multivariate analysis, random matrix
theory, and statistical applications (e.g.\ \cite{Olkin,Armin}) in mind.

\subsection{Inertia and LPM cones}\label{Sinertia}

We conclude with some inertia results that are motivated by modern
considerations. An area of much recent interest and activity involves
embedding data and graphs/discrete structures in hyperbolic manifolds,
and it features in multiple applied fields (see the ``modern'' part of
Section~\ref{Smotivations}). This follows classical explorations of
embedding metric spaces into hyperbolic manifolds, by Krein with
Iokhvidov~\cite{IK} following Krein's announcement~\cite{Krein}.

Thus, suppose we sample points $x(1), \dots, x(n)$ -- say under some
distribution -- from hyperbolic space $\ell^2_\R$ (with a different,
indefinite inner product $[\cdot,\cdot]$), or from the ``unit sphere'' in
it, termed \textit{Lobachevsky space}:
\[
\mathcal{L} := \{ x = (x_0, x_1, \dots) \in \ell^2_\R : x_0 > 0 \text{
and } [x,x] = 1 \}, \quad \text{where} \quad [x,y] := x_0 y_0 -
\sum_{j=1}^\infty x_j y_j.
\]

In the case of (positively curved) Euclidean and Hilbert spheres $S^r
\subset S^\infty$, the metric $d(x,y)$ for $x,y \in S^\infty$ is
recovered from Euclidean Gram matrices via: $\arccos \langle x, y
\rangle$. In a precise parallel, the hyperbolic metric equals ${\rm
arccosh} [x, y]$ on $\mathcal{L}$. Thus, modern hyperbolic data analysis
as well as classical hyperbolic metric embeddings both require working
with hyperbolic Gram matrices $([x(i),x(j)])_{i,j=1}^n$ -- which were
termed \textit{Lorentz--Gram matrices} by Loewner in~\cite{Loew65}. Now
classical results of Krein and others show the following
characterization: \textit{Lorentz--Gram matrices are precisely the real
symmetric matrices with diagonal entries $1$ and exactly one positive
eigenvalue.} If we further assume such $n \times n$ matrices are
nonsingular, then they have precisely $n-1$ negative eigenvalues.\medskip

This brings to our attention the invariant of \textit{inertia}: the
numbers of positive, negative, and zero eigenvalues. Restricting
ourselves to the open dense cones of Hermitian matrices with nonsingular
leading/trailing principal submatrices, our goal here is to partition
matrices with a fixed inertia into $LPM_n(\epsilon)$ or $TPM_n(\epsilon)$
cones.

We begin with notation. Given integers $0 \leq k \leq n$ and an arbitrary
subfield $\F \subseteq \C$, let $\In^\F_n(k)$ denote the matrices in
$LPM_n^\F$ with exactly $k$ negative eigenvalues (and all entries in
$\F$). For instance, $\In^\F_n(0) = PD^\F_n = LPM^\F_n({\bf 1}_n)$. Note
that since each such matrix is invertible and Hermitian, the remaining
$n-k$ values must be positive, and so the complete inertia is specified
by just $(n,k)$. We call $k$ the \textit{negative inertia} of the matrix.

Thus, the LPM-cone admits two partitions (though the second covers all of
$LPM_n^\F$ only when $\F \cap (0,\infty)$ is closed under positive square
roots):
\[
LPM_n^\F = \bigsqcup_{0 \leq k \leq n} \In_n^\F(k) = \bigsqcup_{\epsilon
\in \{ \pm 1 \}^n} LPM_n^\F(\epsilon).
\]
Are these two ``compatible'', i.e.\ is every factor in the first
partition equal to a union of ``entire pieces'' from the second? (Dually,
does every matrix in a fixed $LPM_n^\F(\epsilon)$ have the same inertia?)
The next result explores this property in detail.

\begin{theorem}\label{Tinertia}
Fix $n \geq 1$ and a subfield $\F \subseteq \C$.
\begin{enumerate}[$(1)$]
\item Given $\epsilon \in \{ \pm 1 \}^n$, every matrix in
$LPM_n^\F(\epsilon)$ has the same negative inertia, which equals
$\vartheta(1;\epsilon)$. Here, $\vartheta(1;\epsilon)$ denotes the number
of sign changes in the sequence $\epsilon_0 = 1, \epsilon_1, \epsilon_2,
\dots, \epsilon_n$.

\item The number of $LPM_n^\F(\epsilon)$ cones sitting inside
$\In_n^\F(k)$ is $\binom{n}{k}$ (so these add up to $2^n = \# \{ \epsilon
\}$). Thus if $\F \cap (0,\infty)$ is closed under positive square roots,
then $\In_n^\F(k) = \bigsqcup_{\vartheta(1;\epsilon) = k}
LPM_n^\F(\epsilon)$.

\item Define $\In(\epsilon) := \vartheta(1; \epsilon)$, the inertia of
every matrix in $LPM_n^\F(\epsilon)$. Then $\In(\epsrev) =
\In(\epsilon)$.
\end{enumerate}
Analogous statements hold for the cone $TPM_n^\F$.
\end{theorem}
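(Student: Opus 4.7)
The plan is to reduce all three parts to a single observation via Sylvester's law of inertia applied to the Cholesky factorization. One first embeds $LPM_n^\F(\epsilon) \hookrightarrow LPM_n^\C(\epsilon)$; since inertia is determined over $\C$, for part~(1) there is no loss of generality in working there. By Theorem~\ref{Tlpm} (and Theorem~\ref{Thermitian} in the Hermitian case), every $A \in LPM_n^\C(\epsilon)$ can be written as $A = L \D_\epsilon L^*$ with $L \in {\bf L}_n^\C$ invertible. This is a $*$-congruence, so by Sylvester's law $A$ and $\D_\epsilon$ have the same signature. The matrix $\D_\epsilon$ is diagonal with $(k,k)$ entry $\epsilon_{k-1}\epsilon_k$ (setting $\epsilon_0 := 1$), which is negative exactly when $\epsilon_{k-1} \ne \epsilon_k$. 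Summing over $k \in [n]$ gives the number of sign changes in $(1, \epsilon_1, \dots, \epsilon_n)$, i.e., $\vartheta(1;\epsilon)$, proving part~(1).

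For part~(2), a sign pattern $\epsilon \in \{\pm 1\}^n$ with $\vartheta(1;\epsilon) = k$ is determined by the $k$-subset of $[n]$ listing the positions where the pattern flips (together with the boundary convention $\epsilon_0 = 1$), so there are exactly $\binom{n}{k}$ such $\epsilon$, and summing over $k$ recovers $\sum_k \binom{n}{k} = 2^n$. Combined with part~(1), this gives $\In_n^\F(k) = \bigsqcup_{\vartheta(1;\epsilon)=k} LPM_n^\F(\epsilon)$ as stated, the square-root hypothesis on $\F \cap (0,\infty)$ being what ensures, via the algorithmic proof of Theorem~\ref{Tlpm} and its $\F$-version in Theorem~\ref{Ttower}, that each stratum $LPM_n^\F(\epsilon)$ of the decomposition of $LPM_n^\F$ behaves on the nose like its $\C$-counterpart.

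For part~(3), one can proceed in either of two ways. Combinatorially: the sequence $(1, \epsilon_n\epsilon_{n-1}, \dots, \epsilon_n\epsilon_1, \epsilon_n)$ defining $\vartheta(1;\epsrev)$ has, after cancellation of the common $\epsilon_n$ factors in consecutive ratios, the same sign-change indicator data as $(\epsilon_0, \epsilon_1, \dots, \epsilon_n)$; the two boundary positions $j=1$ and $j=n$ account precisely for $\epsilon_{n-1} \ne \epsilon_n$ and $\epsilon_1 \ne \epsilon_0$, while the interior records $\epsilon_{n-j} \ne \epsilon_{n-j+1}$. Conceptually: by Proposition~\ref{Ptpm}, $A \mapsto A^{-1}$ is an inertia-preserving bijection $LPM_n(\epsilon) \to TPM_n(\epsrev)$ (eigenvalues are merely reciprocated), and $A \mapsto P_n A P_n$ is a similarity identifying $TPM_n(\epsrev)$ with $LPM_n(\epsrev)$ while preserving inertia; composing forces $\In(\epsilon) = \In(\epsrev)$. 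The TPM analogue of the whole theorem then comes for free via the reversal $A \mapsto (P_n A P_n)^*$ from Theorem~\ref{Ttpm}, which is a $*$-congruence $LPM_n^\F(\epsilon) \to TPM_n^\F(\epsilon)$ and hence inertia-preserving, transporting parts~(1)--(3) verbatim.

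I expect no serious obstacle: once the Cholesky factorization is in hand, everything reduces to reading off the signs of the entries of the single explicit diagonal matrix $\D_\epsilon$, which is an elementary exercise. The only small subtlety is ensuring that the subfield $\F$ hypotheses actually allow the Cholesky step to remain inside $\F$ (Theorem~\ref{Ttower}); but for part~(1) this can be bypassed by enlarging to $\C$, and for part~(2) the needed hypothesis is already built into the statement.
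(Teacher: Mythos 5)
Your proof is correct and follows essentially the same route as the paper: Cholesky-factorize $A = L\D_\epsilon L^*$ over $\C$ and read off the inertia of $A$ from the signs of the diagonal entries of $\D_\epsilon$, which are $\epsilon_{k-1}\epsilon_k$, so that the negatives occur exactly at the sign changes of $(1,\epsilon_1,\dots,\epsilon_n)$. The only cosmetic difference is that you invoke Sylvester's law of inertia directly (observing $A = L\D_\epsilon L^*$ is a $*$-congruence) where the paper routes this through a cited rank-one ``non-orthogonal spectral theorem'' (Lemma~\ref{Linertia}, which the paper itself notes is a consequence of Sylvester's law); and in part~(3) the paper's argument is your ``conceptual'' one (via $\D_\epsilon = \D_\epsilon^{-1}$ and conjugation by $P_n$), while your ``combinatorial'' count of sign changes in $\epsrev$ is a correct alternative verification of the same identity.
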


\begin{proof}\hfill
\begin{enumerate}[(1)]
\item This is shown using a ``non-orthogonal spectral theorem'', which is
a consequence of Sylvester's law of inertia (over $\C$). This result was
stated only over the reals in~\cite{BGKP-inertia}, but is valid over $\C$
too:

\begin{lemma}[{\cite[Lemma~2.3]{BGKP-inertia}}]\label{Linertia}
Let $0\leq s,t\leq s+t\leq n$ be integers. If ${\bf u}_1,\dots,{\bf
u}_s,{\bf v}_1,\dots,{\bf v}_t\in \C^n$ are linearly independent, then
the Hermitian matrix ${\bf u}_1{\bf u}_1^*+\dots+{\bf u}_s{\bf
u}_s^*-{\bf v}_1{\bf v}_1^*-\dots-{\bf v}_t{\bf v}_t^*$ has exactly $s$
positive and $t$ negative eigenvalues.  
\end{lemma}

Now since $\F \subseteq \C$, every matrix in $LPM_n^\F(\epsilon)
\subseteq LPM_n^\C(\epsilon)$ has a Cholesky decomposition $A = L
\D_\epsilon L^*$ over $\C$. Writing $L = [ {\bf c}_1 | \cdots | {\bf c}_n
]$ in column form, $A = L \D_\epsilon L^T = \sum_{j=1}^n
(\D_\epsilon)_{jj} {\bf c}_j {\bf c}_j^*$, and so by Lemma~\ref{Linertia}
it has exactly $k$ negative eigenvalues, where $k$ is the number of
negative entries in $\D_\epsilon$. One checks this is precisely
$\vartheta(1;\epsilon)$, since if we set $\epsilon_0 := 1$, then an entry
$(\D_{\epsilon})_{jj} = \epsilon_{j-1}\epsilon_j$ is negative if and only
if $\epsilon_{j-1} \to \epsilon_j$ changes sign.

\item Immediate, since $\epsilon \in \{ \pm 1 \}^n \ \mapsto \
\D_\epsilon$ is a bijection.

\item This holds via~\eqref{Esanity}, because $\D_\epsilon =
\D_\epsilon^{-1}$ and $P_n \D_\epsilon^{-1} P_n$ have the same number of
$-1$ entries. \qedhere
\end{enumerate}
\end{proof}

We end by defining probability distributions supported on the inertial
cones $\In_n^\R(k) \subset LPM_n^\R$. This allows us to sample from them,
which generalizes sampling from the cone $\In_n^\R(0) = PD_n$:

\begin{prop}\label{Pcloning}
Fix integers $n \geq 1$ and $0 \leq k \leq n$, and let $Q$ be as in
Theorem~\ref{Twishart}.
\begin{enumerate}[$(1)$]
\item If $Q$ is supported on $PD_n$, then the following ``cloning'' of
$Q$ is supported on $\In^\R_n(k)$: the union of the transferred
distributions (as in Definition~\ref{Dwishart} or Theorem~\ref{Twishart})
\begin{equation}
\bigsqcup_{\epsilon \, : \, \vartheta(1;\epsilon) = k}
\frac{1}{\binom{n}{k}} Q^{LPM}_\epsilon :
\bigsqcup_{\vartheta(1;\epsilon)=k} LPM^\R_n(\epsilon) \to [0,\infty).
\end{equation}

\item One can replace the denominator $\binom{n}{k}$ by $2^n$ and take
the union over all $\epsilon \in \{ \pm 1 \}^n$, to obtain a distribution
supported on the open dense cone $LPM_n^\R$, which clones the
distribution $Q$ on $PD_n$.

\item The analogous statements for the TPM cones also hold true.
\end{enumerate}
\end{prop}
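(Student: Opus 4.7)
The plan is a straightforward verification: the proposed ``cloned'' laws are finite convex combinations of probability measures supported on pairwise disjoint cones, so the only things to check are that their supports tile $\In_n^\R(k)$ (respectively $LPM_n^\R$), that the weights and total mass work out, and that the cloning claim reduces to the change-of-variable formula already established in Theorem~\ref{Twishart}(1).

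For part~(1), I would first invoke Theorem~\ref{Tinertia}(2) (applied to $\F = \R$, which trivially has $(0,\infty)$ closed under positive square roots) to write
\[
\In_n^\R(k) \ = \bigsqcup_{\vartheta(1;\epsilon) = k} LPM_n^\R(\epsilon),
\]
and observe that this decomposition involves exactly $\binom{n}{k}$ pieces, corresponding bijectively to the $\binom{n}{k}$ subsets of $[n]$ indexing the negative entries of $\D_\epsilon$ (equivalently, the sign-changes of $1, \epsilon_1, \dots, \epsilon_n$). The pieces are pairwise disjoint by Theorem~\ref{Tlpm}(1). For any Borel $\mathscr{A} \subseteq \In_n^\R(k)$ I set
\[
\mathbb{P}(\mathscr{A}) := \frac{1}{\binom{n}{k}} \sum_{\vartheta(1;\epsilon)=k} \mathbb{P}_{\epsilon,Q}^{LPM}\bigl(\mathscr{A} \cap LPM_n^\R(\epsilon)\bigr).
\]
Countable additivity passes through the finite sum, and since each $Q^{LPM}_\epsilon$ is a probability measure on its cone (Theorem~\ref{Twishart}(1)), plugging in $\mathscr{A} = \In_n^\R(k)$ gives total mass $\binom{n}{k}/\binom{n}{k} = 1$. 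That this construction \emph{clones} $Q$ is precisely the content of~\eqref{Echangeofvar}: on each piece $LPM_n^\R(\epsilon)$ the transferred probabilities agree with the $Q$-probabilities of the corresponding events in $PD_n$ via the diffeomorphism $\Phi_{\Id_n} \circ \Phi_{\D_\epsilon}^{-1}$.

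Part~(2) is the identical computation with the index set enlarged to all $\epsilon \in \{\pm 1\}^n$ and the weights rescaled to $1/2^n$, using $LPM_n^\R = \bigsqcup_{\epsilon} LPM_n^\R(\epsilon)$ from Theorem~\ref{Tlpm}(1); openness and density of $LPM_n^\R$ in the symmetric matrices are already recorded there. Part~(3) proceeds by the same bookkeeping after swapping $\Phi_{\D_\epsilon}$ for $\Phi^{\D_\epsilon}$ and invoking the TPM analogues of Theorems~\ref{Twishart}(3) and~\ref{Tinertia} (the latter noted at the end of Theorem~\ref{Tinertia}).

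There is no serious obstacle; the only point meriting care is the combinatorial identification that there are exactly $\binom{n}{k}$ sign patterns $\epsilon$ with $\vartheta(1;\epsilon) = k$, which is built into Theorem~\ref{Tinertia}(1)--(2) via the bijection $\epsilon \leftrightarrow \D_\epsilon \leftrightarrow \{\,j : (\D_\epsilon)_{jj} = -1\,\} \subseteq [n]$. Everything else is finite measure-theoretic bookkeeping on a disjoint union.
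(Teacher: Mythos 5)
Your proof is correct and is essentially the intended argument; the paper in fact states Proposition~\ref{Pcloning} without an explicit proof, treating it as a direct consequence of Theorem~\ref{Tinertia}(2) (the partition of $\In_n^\R(k)$ into $\binom{n}{k}$ cones $LPM_n^\R(\epsilon)$), Theorem~\ref{Tlpm}(1) (pairwise disjointness), and Theorem~\ref{Twishart}(1)/Definition~\ref{Dwishart} (each $Q^{LPM}_\epsilon$ is a probability law cloning $Q$ via $\Phi_{\Id_n}\circ\Phi_{\D_\epsilon}^{-1}$). You identify and use exactly these ingredients, and your check that the map $\epsilon\leftrightarrow\D_\epsilon\leftrightarrow\{j:(\D_\epsilon)_{jj}=-1\}$ gives the $\binom{n}{k}$-count is the right combinatorial observation.
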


\subsection{Further questions}

In addition to the broader goals of developing in detail the
numerical, geometric, probabilistic, statistical, and applied
ramifications of our results above, we end with a few specific
mathematical questions that naturally emerge from this work. The first
question has to do with Example~\ref{Exnotconvex}, which showed that all
cones $LPM_n(\epsilon)$ are not convex if $\epsilon_2 = -1$.

\begin{question}
For which $n \geq 1$ and sign patterns $\epsilon$ is the cone
$LPM_n(\epsilon)$ -- or $TPM_n(\epsrev)$ -- convex?
\end{question}

Another question is as follows.

\begin{question}
Can one extend these Cholesky-type factorizations in a ``consistent''
manner to the entire real symmetric / Hermitian cone -- which subsumes
the usual Cholesky decomposition of the positive matrices
$\overline{PD_n}$?
\end{question}

Our final concrete question is a hands-on one about Wishart densities on
LPM/TPM cones.

\begin{question}
What are the moments of the Wishart density
$W_{\epsilon,n}^{LPM}(\Sigma,N)$ -- or equivalently (by
Theorem~\ref{Twishart}), of $W_{\epsilon,n}^{TPM}(\rev{\Sigma},N)$?
How about the moments of the other densities introduced above?
\end{question}

\subsection{Acknowledgments}

We thank Manjunath Krishnapur for useful discussions re: the Bartlett
decomposition and Remark~\ref{Rnatural}.
A.K.\ was partially supported by a Shanti Swarup Bhatnagar Award from
CSIR (Govt.\ of India). P.K.V.\ was supported by a Centre de recherches
math\'ematiques and Universit\'e Laval (CRM-Laval) Postdoctoral
Fellowship and the Alliance grant.



\appendix
\section{Direct sums and tensor products of LPM cones}\label{Soperations}

In this section, we describe additional operations on the cones
$LPM_n^\F, TPM_n^\F$. We have already seen the reversal map $\epsilon
\mapsto \epsrev$ in the context of~\eqref{Ediffeos}, and $\epsilon \circ
\epsilon'$ in the context of the $\boxdot$ operation on $LPM_n^\F,
TPM_n^\F$. Here is another operation, this time relating sign patterns of
different lengths.

\begin{prop}\label{Poplus}
Given sign patterns $\epsilon \in \{ \pm 1 \}^n, \epsilon' \in \{ \pm 1
\}^{n'}$ for $n,n' \geq 1$, define their {\em direct sum} 
(which is associative but not commutative)
\begin{equation}
\epsilon \oplus \epsilon' := (\epsilon_1, \dots, \epsilon_n; \,
\epsilon_n \epsilon'_1, \dots, \epsilon_n \epsilon'_{n'}) \in \{ \pm 1
\}^{n+n'}.
\end{equation}
Also define the {\em direct sum} of matrices $A,A'$ to be $A
\oplus A' := \begin{pmatrix} A & {\bf 0} \\ {\bf 0}^T & A'
\end{pmatrix}$; and fix a subfield $\F \subseteq \C$.
\begin{enumerate}[$(1)$]
\item Then $\bigsqcup_{n \geq 1} {\bf L}_n^\F$ is a nonabelian semigroup
under $\oplus$, with the operations $J \oplus -$ and $- \oplus J$ both
isometries $: {\bf L}_n^\F \to {\bf L}_{n+n'}^\F$ for all $n,n' \geq 1$
and $J \in {\bf L}_{n'}^\F$, under the
log-Cholesky metrics~\eqref{ElogCholesky}:
\begin{equation}
d_{{\bf L}^\F_{n+n'}}(J \oplus L, J \oplus K) = d_{{\bf L}_n^\F}(L,K) =
d_{{\bf L}^\F_{n+n'}}(L \oplus J, K \oplus J),
\qquad \forall L,K \in {\bf L}_n^\F.
\end{equation}
\item The operation $\oplus$ is compatible with the
Cholesky-decomposition maps $\Phi$. Namely,
\[
\D_{\epsilon \oplus \epsilon'} = \D_\epsilon \oplus \D_{\epsilon'},
\qquad \text{and} \qquad B_\epsilon \oplus B_{\epsilon'} \in
LPM_n(\epsilon \oplus \epsilon')
\ \forall B_\epsilon \in LPM^\F_n(\epsilon), B_{\epsilon'} \in
LPM^\F_{n'}(\epsilon').
\]
Moreover, $\Phi_{B_\epsilon \oplus B_{\epsilon'}}(L \oplus L') =
\Phi_{B_\epsilon}(L) \oplus \Phi_{B_{\epsilon'}}(L')$ for all $L \in
{\bf L}_n, L' \in {\bf L}_{n'}$.

\item The set
$\displaystyle \widetilde{LPM}_{\boldsymbol \sqcup}^\F := \bigsqcup_{n
\geq 1} \prod_{\epsilon \in \{ \pm 1 \}^n} LPM_n^\F(\epsilon)$ and
is a nonabelian semigroup under $\oplus$, and
\[
\widetilde{\Phi}_\D := \bigsqcup_{n \geq 1}
(\Phi_{\D_\epsilon})_{\epsilon \in \{ \pm 1 \}^n} \ : \ \bigsqcup_{n \geq
1} {\bf L}_n^\F \ \to \ \widetilde{LPM}_{\boldsymbol \sqcup}^\F
\]
is a semigroup monomorphism.
(Thus, $\left. \widetilde{\Phi}_\D \right|_{{\bf L}_n^{\F}} =
(\Phi_{\D_\epsilon})_{\epsilon \in \{ \pm 1 \}^n}$ for each $n$.)

\item The following sets are also semigroups under $\oplus$:
\begin{equation}\label{Esqcup}
\pm PD_{\boldsymbol \sqcup}^\F := \bigsqcup_{n \geq 1} \pm PD_n^\F \
\subset \ LPM_{\boldsymbol \sqcup}^\F := \bigsqcup_{n \geq 1}
\bigsqcup_{\epsilon \in \{ \pm 1 \}^n} LPM_n^\F(\epsilon).
\end{equation}
The maps $\bigsqcup_{n \geq 1} \Phi_{\pm \Id_n} : \bigsqcup_{n \geq 1}
{\bf L}_n^\F \to \pm PD_{\boldsymbol \sqcup}^\F$, sending $L \mapsto \pm
L L^*$ are semigroup morphisms.

More generally, fix $m \geq 1$ and a sign pattern $\epsilon \in \{ \pm 1
\}^m$. Then the sets
$\bigsqcup_{n \geq 1} {\bf L}_{nm}^\F$ and $\bigsqcup_{n \geq 1}
LPM_{nm}^\F(\epsilon^{\oplus n})$
are nonabelian semigroups under $\oplus$, and the map
$\bigsqcup_{n \geq 1} \Phi_{\D_{\epsilon^{\oplus n}} }$
between them is a semigroup monomorphism, which is onto if $\F \cap
(0,\infty)$ is closed under positive square roots.
\end{enumerate}
\end{prop}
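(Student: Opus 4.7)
\textbf{Plan of proof for Proposition~\ref{Poplus}.}
My strategy is to prove part~(2) first, since this is the arithmetic core that makes everything else a bookkeeping exercise. The remaining parts~(1), (3), (4) then cascade from (2) combined with the explicit log-Cholesky metric formula~\eqref{ElogCholesky}, the associativity of block-diagonal concatenation, and Theorem~\ref{Tlpm}.

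For part~(1), the semigroup structure on $\bigsqcup_n {\bf L}_n^\F$ under $\oplus$ is immediate: block-diagonal stacking is associative, and non-commutativity is apparent because $L \oplus L'$ and $L' \oplus L$ have their ``large'' and ``small'' diagonal blocks in swapped positions. The isometry claim follows from the formula $d_{{\bf L}_n^\F}(L,K)^2 = \sum_{i>j}(l_{ij}-k_{ij})^2 + \sum_j (\log l_{jj} - \log k_{jj})^2$: when one prepends or appends a common $J$-block, every entry in the $J$-rows and $J$-columns cancels, leaving exactly $d_{{\bf L}_n^\F}(L,K)^2$. For part~(2), the identity $\D_{\epsilon \oplus \epsilon'} = \D_\epsilon \oplus \D_{\epsilon'}$ is a one-line check on diagonal entries using $\epsilon_0 := \epsilon'_0 := 1$: positions $1,\dots,n$ give $\epsilon_{k-1}\epsilon_k$ (as in $\D_\epsilon$), position $n+1$ gives $\epsilon_n \cdot (\epsilon_n\epsilon'_1) = \epsilon'_1$, and position $n+j$ for $j \geq 2$ gives $(\epsilon_n\epsilon'_{j-1})(\epsilon_n\epsilon'_j) = \epsilon'_{j-1}\epsilon'_j$, matching $\D_{\epsilon'}$. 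The claim $B_\epsilon \oplus B_{\epsilon'} \in LPM^\F_{n+n'}(\epsilon \oplus \epsilon')$ is verified by the multiplicativity of determinants on block-diagonal matrices: the leading $k \times k$ minor equals $\det((B_\epsilon)_{[k][k]})$ for $k \leq n$ and $\det(B_\epsilon)\det((B_{\epsilon'})_{[k-n][k-n]})$ otherwise, whose signs are precisely $\epsilon_k$ and $\epsilon_n\epsilon'_{k-n}$. Finally $\Phi_{B_\epsilon \oplus B_{\epsilon'}}(L \oplus L') = (L \oplus L')(B_\epsilon \oplus B_{\epsilon'})(L \oplus L')^T = \Phi_{B_\epsilon}(L) \oplus \Phi_{B_{\epsilon'}}(L')$ is pure block-matrix arithmetic.

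For part~(3), I first observe that $\oplus$ on sign patterns is associative (a brief index check at the seam), so every $\sigma \in \{\pm 1\}^{n+n'}$ admits a \emph{unique} decomposition $\sigma = \epsilon(\sigma) \oplus \epsilon'(\sigma)$ with $\epsilon(\sigma) := (\sigma_1,\dots,\sigma_n)$ and $\epsilon'(\sigma) := (\sigma_n\sigma_{n+1},\dots,\sigma_n\sigma_{n+n'})$. This lets me define $\oplus$ on $\widetilde{LPM}_{\boldsymbol\sqcup}^\F$ componentwise by $((A_\epsilon)_\epsilon \oplus (A'_{\epsilon'})_{\epsilon'})_\sigma := A_{\epsilon(\sigma)} \oplus A'_{\epsilon'(\sigma)}$, and associativity of this operation inherits from that of $\oplus$ on sign patterns and matrices. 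Applying part~(2) componentwise shows $\widetilde{\Phi}_\D(L \oplus L') = \widetilde{\Phi}_\D(L) \oplus \widetilde{\Phi}_\D(L')$; injectivity follows since any single component $\Phi_{\D_\epsilon}$ is a bijection by Theorem~\ref{Tlpm} (say the component $\epsilon = {\bf 1}_n$, giving $LL^*$, which determines $L$). For part~(4), closure of the two sets $\pm PD^\F_{\boldsymbol\sqcup}$ under $\oplus$ follows from part~(2) applied to the constant sign pattern ${\bf 1}_n$ and the alternating pattern $((-1)^k)_k$ respectively, and the maps $\Phi_{\pm\Id_n}$ are semigroup morphisms by the block-matrix identity $(L \oplus L')(\pm\Id_{n+n'})(L \oplus L')^* = \pm LL^* \oplus \pm L'(L')^*$. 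For the more general assertion, the associativity of $\oplus$ gives $\epsilon^{\oplus n} \oplus \epsilon^{\oplus n'} = \epsilon^{\oplus(n+n')}$, so closure and the morphism property reduce again to part~(2), while surjectivity under the hypothesis on $\F \cap (0,\infty)$ is precisely the content of the algorithmic Cholesky factorization in Theorem~\ref{Tlpm} applied at each level $nm$.

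There is no genuine mathematical obstacle here: the entire proposition is structural bookkeeping on top of Theorem~\ref{Tlpm}. The step that requires the most care is the unique decomposition $\sigma = \epsilon(\sigma) \oplus \epsilon'(\sigma)$ in part~(3), because the $\oplus$-operation on sign patterns is asymmetric (the second component gets multiplied through by $\epsilon_n$), so one must be explicit about how $\sigma \in \{\pm 1\}^{n+n'}$ is split. I would flag this decomposition at the top of the proof of~(3) and reuse it verbatim in the $\epsilon^{\oplus n}$ portion of~(4).
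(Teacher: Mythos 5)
Your proof is correct and takes essentially the same approach as the paper, which states the argument only for part~(3); you supply the same key ingredient there (the bijection $(\epsilon,\epsilon') \mapsto \epsilon\oplus\epsilon'$ used to index components, reversed via $\epsilon'(\sigma) = (\sigma_n\sigma_{n+1},\dots,\sigma_n\sigma_{n+n'})$) and in addition flesh out parts~(1), (2), (4), which the paper treats as routine. The only cosmetic difference is that you derive injectivity of $\widetilde{\Phi}_\D$ by extracting a single component $\Phi_{\Id_n}$ rather than citing Theorem~\ref{Ttower}, which is equally valid.
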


\begin{proof}
We only explain (3)~the assertions involving
$\widetilde{LPM}^\F_{\boldsymbol \sqcup}$. First, the operation on
$\widetilde{LPM}^\F_{\boldsymbol \sqcup}$ is:
\[
(A_\epsilon)_{\epsilon \in \{ \pm 1 \}^n} \oplus
(B_{\epsilon'})_{\epsilon' \in \{ \pm 1 \}^{n'}} :=
(A_\epsilon \oplus B_{\epsilon'})_{(\epsilon, \epsilon') \in \{ \pm 1
\}^{n+n'}},
\]
where we index the terms on the right using that the map $: (\epsilon,
\epsilon') \mapsto \epsilon \oplus \epsilon'$ is one-to-one, hence a
bijection of $\{ \pm 1 \}^{n+n'}$. Next, 
$\widetilde{\Phi}_{\D}$ is injective by Theorem~\ref{Ttower};
and given $L \in {\bf L}_n$ and $L' \in {\bf L}_{n'}$ for some $n,n'
\geq 1$ (which may be equal), we compute using part~(2) and the
above bijection:
\[
\bigsqcup_{\epsilon \in \{ \pm 1 \}^n} \Phi_{\D_\epsilon}(L) \oplus
\bigsqcup_{\epsilon' \in \{ \pm 1 \}^{n'}} \Phi_{\D_{\epsilon'}}(L') =
\bigsqcup_{\epsilon'' = \epsilon \oplus \epsilon' \in \{ \pm 1 \}^{n+n'}}
\Phi_{\D_\epsilon}(L) \oplus \Phi_{\D_{\epsilon'}}(L') =
\bigsqcup_{\epsilon''} \Phi_{\D_{\epsilon''}}(L \oplus L'). \qedhere
\]
\end{proof}

We collect together additional facts into two results: the first for the
metric groups $(LPM_n^\F(\epsilon), \circledast)$ for all $n,\epsilon$,
and the second for their unions $(LPM_n^\F, \boxdot)$.

\begin{theorem}\label{Toplus1}
Fix $\F \subseteq \C$ with $\F \cap (0,\infty)$ closed
under positive square roots. Let $B_\epsilon := \D_\epsilon\ \forall
\epsilon$.
\begin{enumerate}[$(1)$]
\item For all $\epsilon' \in \{ \pm 1 \}^{n'}$ and $J \in {\bf L}_{n'}$,
the following maps are isometries under $\circledast$:
\begin{align*}
- \oplus J \D_{\epsilon'} J^* : &\ LPM_n^\F(\epsilon) \to
LPM_{n+n'}^\F(\epsilon \oplus \epsilon'),\\
J \D_{\epsilon'} J^* \oplus - : &\ LPM_n^\F(\epsilon) \to
LPM_{n'+n}^\F(\epsilon' \oplus \epsilon).
\end{align*}

\item The operation $\oplus$ is a(n additive and not bi-additive) map of
2-divisible abelian groups $: LPM_n^\F(\epsilon) \times
LPM_{n'}^\F(\epsilon') \hookrightarrow LPM_{n+n'}^\F(\epsilon \oplus
\epsilon')$, all under $\circledast$.

\item If moreover $\F = \R$ or $\C$, then $\oplus : LPM_n^\F(\epsilon)
\times LPM_{n'}^\F(\epsilon') \hookrightarrow LPM_{n+n'}^\F(\epsilon
\oplus \epsilon')$ is in fact an $\R$-linear isometric monomorphism of
real Euclidean spaces, which translates to the embedding $:
(\R^{n(n+1)/2} \times \R^{n'(n'+1)/2}, \| \cdot \|_2) \hookrightarrow
(\R^{(n+n')(n+n'+1)/2}, \| \cdot \|_2)$ or $\R^{n^2} \times \R^{(n')^2}
\hookrightarrow \R^{(n+n')^2}$. (This also shows that the inclusions here
and in part~(3) are $\R$-linear, not $\R$-bilinear.)
\end{enumerate}
The analogous statements for TPM matrices also hold true.
\end{theorem}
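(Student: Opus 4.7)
The plan is to push every claim to Cholesky space via the isometric group isomorphisms $\Phi_{\D_\epsilon}$ (from Theorems~\ref{TLPMnMetric} and~\ref{Ttower}) and verify it there, where the direct sum is transparent. The single compatibility driving the whole proof is the identity
\begin{equation*}
(L \oplus L') \circledcirc (K \oplus K') = (L \circledcirc K) \oplus (L' \circledcirc K'), \qquad (L \oplus L')_{\circledcirc}^{-1} = L_{\circledcirc}^{-1} \oplus (L')_{\circledcirc}^{-1},
\end{equation*}
which is a one-line computation from $\floo{L \oplus L'} = \floo{L} \oplus \floo{L'}$ and $\D(L \oplus L') = \D(L) \oplus \D(L')$, inserted into $L \circledcirc K = \floo{L} + \floo{K} + \D(L)\D(K)$. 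Combined with Proposition~\ref{Poplus}(2), which gives $\D_\epsilon \oplus \D_{\epsilon'} = \D_{\epsilon \oplus \epsilon'}$ and $\Phi_{\D_{\epsilon \oplus \epsilon'}}(L \oplus L') = \Phi_{\D_\epsilon}(L) \oplus \Phi_{\D_{\epsilon'}}(L')$, this transports $\oplus$ on the LPM side (under $\circledast$) to $\oplus$ on the Cholesky side (under $\circledcirc$).

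For part~(1), the map $A \mapsto A \oplus J\D_{\epsilon'}J^*$ is conjugated by $\Phi$ to $L \mapsto L \oplus J$ on ${\bf L}_n^\F$; its isometry under the log-Cholesky metric is precisely Proposition~\ref{Poplus}(1), and the case $J\D_{\epsilon'}J^* \oplus -$ is handled identically. For part~(2), the compatibility identity above, together with $\Phi_{\D_{\epsilon \oplus \epsilon'}}$ being a group isomorphism, immediately yields
\begin{equation*}
(A_1 \oplus A_1') \circledast (A_2 \oplus A_2') = (A_1 \circledast A_2) \oplus (A_1' \circledast A_2'),
\end{equation*}
so $\oplus$ is a group morphism from the product; 2-divisibility of each cone was already verified in Theorem~\ref{Ttower}(4) under the running hypothesis that $\F \cap (0,\infty)$ is closed under positive square roots.

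For part~(3), when $\F \in \{\R, \C\}$ each cone is a finite-dimensional real inner-product space with scalar multiplication $\alpha \cdot L = \alpha \floo{L} + \D(L)^\alpha$ on the Cholesky side. The identity $\alpha \cdot (L \oplus L') = (\alpha \cdot L) \oplus (\alpha \cdot L')$ is immediate from the block structure, and combined with additivity from part~(2) it gives $\R$-linearity of the embedding. That this embedding is an isometry of Euclidean spaces is the Pythagorean identity obtained by applying the log-Cholesky formula~\eqref{ElogCholesky} blockwise to the two diagonal blocks of $L \oplus L'$ and $K \oplus K'$. The TPM analogues then follow by conjugating by $P_n$, which is an isometric isomorphism of abelian Lie groups by Theorems~\ref{Ttpm}, \ref{Triemannian}, and~\ref{TbiggroupsF}(3). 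The proof is essentially bookkeeping: the only point requiring care is that the off-diagonal blocks of $L \oplus L'$ vanish, so that the Cholesky operations genuinely split; once that is noted, every assertion reduces to Proposition~\ref{Poplus} plus the Euclidean/Hilbert structure on $({\bf L}_n^\F, \circledcirc, \cdot)$ established in Theorems~\ref{Tinnerproduct} and~\ref{Ttower}.
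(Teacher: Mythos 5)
Your proof is correct, and the approach is the natural one: reduce everything to Cholesky space via the group isomorphism $\Phi_{\D_\epsilon}$, observe that $\circledcirc$, $(\cdot)_\circledcirc^{-1}$, scalar multiplication, and the log-Cholesky metric all split blockwise across a direct sum (because off-diagonal blocks of $L \oplus L'$ are zero), and then invoke Proposition~\ref{Poplus}(1)--(2) for the interaction of $\oplus$ with the metric and with $\Phi$. The paper itself omits the proof of this theorem entirely (the proof of Theorem~\ref{Toplus2} states ``The proofs of all but the final part here (and all of Theorem~\ref{Toplus1}) are omitted, as they reveal no surprises''), so your argument is precisely the expected fill-in rather than an alternative route. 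One small referencing slip: the 2-divisibility you use in part~(2) is established in Theorem~\ref{Ttower}(1) (and in its proof) under the running square-root-closure hypothesis; Theorem~\ref{Ttower}(4) is the stronger $\mathbb{Q}$-vector-space claim, which needs closure under all prime roots. You could also make the ``not bi-additive'' assertion explicit by noting that $(A_1 \circledast A_2) \oplus B$ transports to $(L_1 \circledcirc L_2) \oplus K$, whereas $(A_1 \oplus B) \circledast (A_2 \oplus B)$ transports to $(L_1 \circledcirc L_2) \oplus (K \circledcirc K)$, and these differ unless $K = \Id$.
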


\begin{theorem}\label{Toplus2}
Setting as in Theorem~\ref{Toplus1}.
\begin{enumerate}[$(1)$]
\item The group operation $\boxdot$ is also compatible with $\oplus$: the
operation $\oplus$ is an additive map of abelian groups $: LPM_n^\F
\times LPM_{n'}^\F \hookrightarrow LPM_{n+n'}^\F$, all under $\boxdot$.

\item Theorem~\ref{Toplus1}(1) extends to all of $LPM_n^\F$: for all
$\epsilon' \in \{ \pm 1 \}^{n'}$, $J \in {\bf L}_{n'}$, and $p \in
[1,\infty]$, the maps
$- \oplus J \D_{\epsilon'} J^*$ and $J \D_{\epsilon'} J^* \oplus - \quad
: LPM_n^\F \to LPM_{n+n'}^\F$
are isometries under $d_p$~\eqref{Edp}.

\item The analogous statements for TPM matrices also hold true. Moreover,
$\displaystyle \overset{\longleftarrow}{\epsilon \oplus \epsilon'} =
{\overset{\leftarrow}{\epsilon'}} \oplus \epsrev$;
and if $n=n'$, then
$\displaystyle \overset{\longleftarrow}{\epsilon \circ \epsilon'} =
\epsrev \circ \overset{\leftarrow}{\epsilon'}$ (see
Definition~\ref{Dschurprod}). More generally, $\overset{\longleftarrow}{A
\oplus B} = \rev{B} \oplus \rev{A}$.
\end{enumerate}
\end{theorem}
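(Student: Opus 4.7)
The plan is to reduce each of the three parts to direct algebraic bookkeeping using the Cholesky presentation $A = L \D_\epsilon L^T$ available from Proposition~\ref{Poplus} and the formula for $\boxdot$ from Theorem~\ref{TbiggroupsF}. Two key identities will be used throughout: (a) $\floo{L \oplus L'} = \floo{L} \oplus \floo{L'}$ and $\D(L \oplus L') = \D(L) \oplus \D(L')$, both immediate from the block form; and (b) $(A \oplus A')(B \oplus B') = (AB) \oplus (A'B')$ for conformably sized square blocks.

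For part~(1), writing $A = L \D_\epsilon L^T$, $A' = L' \D_{\epsilon'} (L')^T$, $B = K \D_\delta K^T$, $B' = K' \D_{\delta'} (K')^T$, I invoke Proposition~\ref{Poplus}(2) to get $A \oplus A' = (L \oplus L') \D_{\epsilon \oplus \epsilon'} (L \oplus L')^T$, and similarly for $B \oplus B'$. Applying the $\boxdot$-formula from Theorem~\ref{TbiggroupsF}, using that (a) makes $\circledcirc$ commute with $\oplus$, and distributing $\D_{\epsilon \oplus \epsilon'} \D_{\delta \oplus \delta'}$ via (b), I obtain $(A \oplus A') \boxdot (B \oplus B') = (A \boxdot B) \oplus (A' \boxdot B')$. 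To identify the target sign pattern, I then verify the Schur-product identity $(\epsilon \oplus \epsilon') \circ (\delta \oplus \delta') = (\epsilon \circ \delta) \oplus (\epsilon' \circ \delta')$ entrywise.

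For part~(2), recall from~\eqref{Edp} that $d_p$ combines the log-Cholesky distance on Cholesky space with a Kronecker delta on sign patterns. Given $A_i = L_i \D_{\epsilon_i} L_i^T \in LPM_n^\F(\epsilon_i)$ for $i=1,2$, Proposition~\ref{Poplus}(2) gives $A_i \oplus J \D_{\epsilon'} J^* = (L_i \oplus J) \D_{\epsilon_i \oplus \epsilon'} (L_i \oplus J)^T$. Since the first $n$ coordinates of $\epsilon_i \oplus \epsilon'$ recover $\epsilon_i$, we have $\epsilon_1 \oplus \epsilon' = \epsilon_2 \oplus \epsilon'$ iff $\epsilon_1 = \epsilon_2$, so the Kronecker term is preserved; and the log-Cholesky term is preserved by Proposition~\ref{Poplus}(1). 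The left-adjunction case $J \D_{\epsilon'} J^* \oplus -$ is identical. For part~(3), the TPM version is obtained by conjugating the entire preceding argument by the linear isomorphism $A \mapsto P_n A P_n$ supplied by Proposition~\ref{Ptpm}. For the reversal identities, a block computation yields $P_{n+n'}(A \oplus B) P_{n+n'} = (P_{n'} B P_{n'}) \oplus (P_n A P_n)$, which after conjugate-transposing gives $\rev{A \oplus B} = \rev{B} \oplus \rev{A}$. The sign-pattern identities $\overset{\longleftarrow}{\epsilon \oplus \epsilon'} = \overset{\leftarrow}{\epsilon'} \oplus \epsrev$, and (for $n=n'$) $\overset{\longleftarrow}{\epsilon \circ \epsilon'} = \epsrev \circ \overset{\leftarrow}{\epsilon'}$, then follow from a case analysis on the position $k$, using the definition $\epsrev_k = \epsilon_n \epsilon_{n-k}$ for $k<n$ and $\epsrev_n = \epsilon_n$.

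I expect no deep obstacle; the argument is bookkeeping organized around the identities (a) and (b). The most error-prone step will be the case analysis for $\overset{\longleftarrow}{\epsilon \oplus \epsilon'}$, which must handle separately the four regimes $k<n'$, $k=n'$, $n'<k<n+n'$, and $k=n+n'$, because the definition of $\epsrev$ is piecewise, the final entry of $\epsilon \oplus \epsilon'$ is $\epsilon_n \epsilon'_{n'}$ rather than $\epsilon'_{n'}$, and the boundary indices require separate verification.
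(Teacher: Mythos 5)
Your proposal is correct, and for parts (1) and (2) it essentially supplies the routine block-algebra that the paper deliberately omits (the paper declares these parts ``reveal no surprises''). The one place where you take a genuinely different route is the sign-pattern reversal identity $\overset{\longleftarrow}{\epsilon \oplus \epsilon'} = \overset{\leftarrow}{\epsilon'} \oplus \epsrev$ in part (3). You propose a four-regime case analysis on the index $k$, using the piecewise definition of $\epsrev$. This works, and your computation of $\overset{\longleftarrow}{A \oplus B} = \rev{B} \oplus \rev{A}$ is exactly what the paper does. But the paper then \emph{deduces} the sign-pattern identity from that matrix identity, rather than verifying it coordinatewise: since $\epsilon \mapsto \D_\epsilon$ is a bijection and $\D_\epsilon^{-1} = \D_\epsilon$, the identity $\rev{\D_\epsilon} = P_n \D_\epsilon^{-1} P_n = \D_{\epsrev}$ from Remark~\ref{Rsanity} gives
\[
\D_{\overset{\longleftarrow}{\epsilon\oplus\epsilon'}} = P_{n+n'} \D_{\epsilon\oplus\epsilon'}^{-1} P_{n+n'} = P_{n+n'}(\D_\epsilon \oplus \D_{\epsilon'}) P_{n+n'} = (P_{n'}\D_{\epsilon'}P_{n'}) \oplus (P_n\D_\epsilon P_n) = \D_{\rev{\epsilon'}} \oplus \D_{\epsrev},
\]
and injectivity of $\epsilon \mapsto \D_\epsilon$ finishes it. This avoids the error-prone case analysis you flag at the end. (The identity $\overset{\longleftarrow}{\epsilon\circ\epsilon'} = \epsrev \circ \rev{\epsilon'}$ can be extracted the same way, using $\D_{\alpha\circ\beta} = \D_\alpha\D_\beta$.) Both approaches are sound; the paper's is cleaner because it routes tedious index arithmetic through a matrix identity that the text has already established.
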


\begin{proof}
The proofs of all but the final part here (and all of
Theorem~\ref{Toplus1}) are omitted, as they reveal no surprises. 
Next, the second equation in the final part emerges out of the explicit
computations used in proving part~(1); and the final assertion is
straightforward.

To show the first equation in the final part, by part~(2) it
suffices to consider one matrix each from $LPM_n(\epsilon),
LPM_{n'}(\epsilon')$; we choose $\D_\epsilon, \D_{\epsilon'}$,
respectively. Now the first assertion follows via~\eqref{Esanity}:
\[
\D_{\overset{\longleftarrow}{\epsilon \oplus \epsilon'}} =
P_{n+n'} \D_{\epsilon \oplus \epsilon'}^{-1} P_{n+n'} = P_{n+n'}
(\D_\epsilon \oplus \D_{\epsilon'}) P_{n+n'} = (P_{n'} \D_{\epsilon'}
P_{n'}) \oplus (P_n \D_\epsilon P_n) =
\D_{\overset{\leftarrow}{\epsilon'}} \oplus \D_{\epsrev}. \qedhere
\]
\end{proof}

\begin{remark}
Proposition~\ref{Poplus} and Theorems~\ref{Toplus1} and~\ref{Toplus2}
have ``real-closed'' variants (except for the assertions about metrics
and Euclidean spaces). Here one works with $\F$ as in
Theorem~\ref{Trealclosed}, and the proofs are verbatim as those above.
\qed
\end{remark}

We next come to the tensor/Kronecker product of matrices:
$A \otimes B := \begin{pmatrix} a_{11} B & a_{12} B & \cdots \\ a_{21} B
& a_{22} B & \cdots \\ \vdots & \vdots & \ddots \end{pmatrix}$. This
operation enjoys several compatibility properties: it is associative,
behaves well with transposes ($(A \otimes B)^T = A^T \otimes B^T$) and
with conjugation, hence with the complex-adjoint. The tensor product of
upper/lower triangular matrices has the same property; it is also
multiplicative: $(A \otimes C)(B \otimes D) = AB \otimes CD$ when all
terms are defined; and it distributes over addition in either factor.

However, $\otimes$ does not behave well (distribute) with $\oplus$. E.g.\
say $A = \begin{pmatrix} a & b \\ c & d \end{pmatrix} \in \R^{2 \times
2}$. Then
\[
A \otimes (B \oplus B') = \begin{pmatrix}
aB & 0 & bB & 0 \\
0 & aB' & 0 & bB' \\
cB & 0 & dB & 0 \\
0 & cB' & 0 & dB'
\end{pmatrix}, \qquad
(A \otimes B) \oplus (A \otimes B') = \begin{pmatrix}
aB & bB & 0 & 0\\
cB & dB & 0 & 0\\
0 & 0 & aB' & bB'\\
0 & 0 & cB' & dB'
\end{pmatrix}.
\]
These matrices are permutationally similar/conjugate, but are not always
equal -- which is required when considering leading principal minors.

Similarly, it is not universally true that $(A \circledast A') \otimes B
\neq (A \otimes B) \circledast (A' \otimes B)$. To see this, suppose $A =
L \D_\epsilon L^T$, $A' = L' \D_\epsilon (L')^T$, $B = K \D_\delta K^T$.
Then the left-hand side equals
\begin{align*}
(A \circledast A') \otimes B = &\ (L \circledcirc L') \D_\epsilon (L
\circledcirc L')^T \otimes (K \D_\delta K^T)\\
= &\ ((L \circledcirc L') \otimes K) (\D_\epsilon \otimes \D_\delta)
((L \circledcirc L') \otimes K)^T,
\end{align*}
while the right-hand side equals
\begin{align*}
(A \otimes B) \circledast (A' \otimes B) = &\ [(L \otimes K) \circledcirc
(L' \otimes K)] (\D_\epsilon \otimes \D_\delta) [(L^T \otimes K^T)
\circledcirc ((L')^T \otimes K^T)].
\end{align*}
By Theorem~\ref{Tlpm}, we need to check if the Cholesky factors agree:
\[
(L \circledcirc L') \otimes K \equiv (L \otimes K) \circledcirc (L'
\otimes K),
\]
and this is not universally true.

Other properties that do not hold universally -- on the Cholesky level --
are:
\begin{align}
\begin{aligned}
L \otimes (K \circledcirc K') &\ \not\equiv (L \otimes K) \circledcirc
(L \otimes K'),\\
(L \circledcirc L') \otimes (K \circledcirc K') &\ \not\equiv (L \otimes
K) \circledcirc (L' \otimes K'),\\
d_{{\bf L}_{nn'}}(J \otimes L, J \otimes K) &\ \not\equiv d_{{\bf
L}_n}(L,K),
\end{aligned}
\end{align}
and so they do not hold on the level of LPM spaces either. However, a few
properties do hold -- for instance, it may not be immediately obvious
that if $A_\epsilon \in LPM_n(\epsilon)$ and $A_{\epsilon'} \in
LPM_{n'}(\epsilon')$ then $A_\epsilon \otimes A_{\epsilon'} \in
LPM_{nn'}(\epsilon \otimes \epsilon')$. We show this now, not just over
$\R$ but more generally.

\begin{theorem}
Let $\mathbb{E}$ be a real-closed field and $\F \subseteq
\overline{\mathbb{E}} = \mathbb{E}[\sqrt{-1}]$.
Fix integers $n,n' \geq 1$ and sign patterns $\epsilon, \delta \in \{ \pm
1 \}^n$ and $\epsilon', \delta' \in \{ \pm 1 \}^{n'}$. 
Define the {\em tensor product} of $\epsilon, \epsilon'$ via the equation
\begin{equation}
\D_{\epsilon \otimes \epsilon'} := \D_\epsilon \otimes \D_{\epsilon'}.
\end{equation}
\begin{enumerate}[$(1)$]
\item The set $\bigsqcup_{n \geq 1} {\bf L}^\F_n$ is a nonabelian monoid
under $\otimes$, with identity $(1)_{1 \times 1}$.

\item The tensor product behaves well with reversal and with
componentwise products:
\begin{equation}\label{Etensorrev}
\overset{\longleftarrow}{\epsilon \otimes \epsilon'} = \epsrev \otimes
\overset{\leftarrow}{\epsilon'}, \qquad
(\epsilon \otimes \epsilon') \circ (\delta \otimes \delta') = (\epsilon
\circ \delta) \otimes (\epsilon' \circ \delta').
\end{equation}
More strongly, $\overset{\longleftarrow}{A \otimes B} = \rev{A} \otimes
\rev{B}$ for all square matrices $A_{n \times n}, B_{n' \times n'}$.

\item The operation $\otimes$ is compatible with the Cholesky
decomposition: with $\F$ as in Theorem~\ref{Trealclosed},
\begin{equation}\label{Etensor}
\Phi_{\D_{\epsilon \otimes \epsilon'}}(L \otimes L') =
\Phi_{\D_\epsilon}(L) \otimes \Phi_{\D_{\epsilon'}}(L'), \qquad \forall L
\in {\bf L}^\F_n, L' \in {\bf L}^\F_{n'}.
\end{equation}
More generally, $\Phi_{B_\epsilon \otimes B_{\epsilon'}}(L \otimes L') =
\Phi_{B_\epsilon}(L) \otimes \Phi_{B_{\epsilon'}}(L')$ for all $L \in
{\bf L}_n^\F, L' \in {\bf L}_{n'}^\F$ and $B_\epsilon \in
LPM_n^\F(\epsilon), B_{\epsilon'} \in LPM_{n'}^\F(\epsilon')$.

\item The sets $+PD_{\boldsymbol \sqcup}^\F \subset LPM^\F_{\boldsymbol
\sqcup}$ defined in~\eqref{Esqcup} are nonabelian monoids under
$\otimes$, with identity $(1)_{1 \times 1}$. More precisely,
for all $n,n' \geq 1$, $\epsilon \in \{ \pm 1 \}^n$, and $\epsilon' \in
\{ \pm 1 \}^{n'}$, we have
\begin{equation}\label{Elpmtensor}
LPM_n^\F(\epsilon) \otimes LPM_{n'}^\F(\epsilon') \subseteq
LPM_{nn'}^\F(\epsilon \otimes \epsilon').
\end{equation}
Moreover, $\Phi_+ := \bigsqcup_{n \geq 1} \Phi_{\Id_n}$ is a monoid
monomorphism $: \bigsqcup_n {\bf L}_n^\F \to +PD_{\boldsymbol
\sqcup}^\F$, which is surjective if $\F \cap (0,\infty)$ is closed under
positive square roots.
\end{enumerate}
Similar assertions hold for tensor products of TPM matrices.
\end{theorem}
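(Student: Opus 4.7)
The plan is to prove the four assertions sequentially, using the mixed-product identity $(X \otimes Y)(Z \otimes W) = XZ \otimes YW$ and the adjoint identity $(X \otimes Y)^* = X^* \otimes Y^*$ as the two workhorses. Parts (1)--(3) reduce to direct bookkeeping; the actual work lies in the LPM inclusion of part~(4), which I handle by passing to the Cholesky factorization in a larger field.

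For part~(1), if $L, L' \in {\bf L}^\F$ then $L \otimes L' = [l_{ij} L']_{i,j}$ is block-lower-triangular with each diagonal block $l_{ii} L'$ itself lower triangular, so $L \otimes L'$ is lower triangular with diagonal entries $l_{ii} l'_{jj} \in \F \cap (0,\infty)$; associativity of $\otimes$ is standard and $(1)_{1 \times 1}$ is a two-sided identity. For part~(2), I first verify $P_{nn'} = P_n \otimes P_{n'}$ by index comparison: writing row indices as $(i-1)n' + k$ with $i \in [n], k \in [n']$, the anti-diagonal condition $u + v = nn' + 1$ splits as $i + j = n+1$ and $k + l = n' + 1$. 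Combining this with the mixed-product and adjoint identities yields $\overset{\longleftarrow}{A \otimes B} = \rev{A} \otimes \rev{B}$; specializing to $A = \D_\epsilon, B = \D_{\epsilon'}$ and invoking $\rev{\D_\epsilon} = \D_{\epsrev}$ (a direct diagonal check) together with the injectivity of $\epsilon \mapsto \D_\epsilon$ gives $\overset{\longleftarrow}{\epsilon \otimes \epsilon'} = \epsrev \otimes \overset{\leftarrow}{\epsilon'}$. The Hadamard-product formula reduces to $\D_\epsilon \D_\delta = \D_{\epsilon \circ \delta}$ (checked on diagonal entries via $\epsilon_{k-1}\epsilon_k \cdot \delta_{k-1}\delta_k$) combined with mixed-product applied to diagonal matrices. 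Part~(3) is then one line: $(L \otimes L')(B_\epsilon \otimes B_{\epsilon'})(L^* \otimes (L')^*) = (L B_\epsilon L^*) \otimes (L' B_{\epsilon'} (L')^*)$.

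The main obstacle is the inclusion $LPM_n^\F(\epsilon) \otimes LPM_{n'}^\F(\epsilon') \subseteq LPM_{nn'}^\F(\epsilon \otimes \epsilon')$ in part~(4). My strategy is to embed $\F \hookrightarrow \overline{\mathbb{E}}$ and Cholesky-factorize in the larger field. Since $\mathbb{E}$ is real-closed, $\mathbb{E} \cap (0,\infty)$ is closed under positive square roots, so Theorems~\ref{Tlpm} and~\ref{Trealclosed} provide factorizations $A = L \D_\epsilon L^*$ and $A' = L' \D_{\epsilon'} (L')^*$ with $L, L'$ in the relevant ${\bf L}^{\overline{\mathbb{E}}}$-space. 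By part~(3), $A \otimes A' = (L \otimes L') \, \D_{\epsilon \otimes \epsilon'} \, (L \otimes L')^*$, and $L \otimes L'$ is lower triangular with positive diagonal (in $\mathbb{E}$). Applying the Cauchy--Binet computation~\eqref{Ecauchybinet} to this triple product shows that for every $m \leq nn'$, the leading $m \times m$ minor of $A \otimes A'$ equals $(\det (L \otimes L')_{[m][m]})^2 \cdot \det (\D_{\epsilon \otimes \epsilon'})_{[m][m]}$; the first factor is positive as a square of a product of positive elements, and the second is $(\epsilon \otimes \epsilon')_m$ by the defining relation $\D_{\epsilon \otimes \epsilon'} = \D_\epsilon \otimes \D_{\epsilon'}$. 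Since $A \otimes A'$ has entries back in $\F$, we obtain $A \otimes A' \in LPM_{nn'}^\F(\epsilon \otimes \epsilon')$. The monoid claims and morphism properties of $\Phi_+$ then follow routinely: closure specializes to PD when $\epsilon = {\bf 1}_n, \epsilon' = {\bf 1}_{n'}$; $\Phi_+(L \otimes L') = (L \otimes L')(L \otimes L')^* = \Phi_+(L) \otimes \Phi_+(L')$ is immediate from mixed-product; injectivity reduces to that of each $\Phi_{\Id_n}$ combined with matrices of different sizes being distinct; surjectivity under the square-root-closure hypothesis is Theorem~\ref{Tlpm}(2). The TPM analogue transports verbatim via the reversal map~\eqref{Ereversal}, which is an isomorphism between the LPM and TPM sides and intertwines the tensor product with itself by the identity $\overset{\longleftarrow}{A \otimes B} = \rev{A} \otimes \rev{B}$ from part~(2).
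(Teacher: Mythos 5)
Your proposal is correct and follows essentially the same route as the paper's (terser) proof: part $(1)$ is a direct check; part $(2)$ reduces to the factorization $P_{nn'} = P_n \otimes P_{n'}$ and the mixed-product/adjoint identities; part $(3)$ is immediate from the mixed-product law; and for part $(4)$ you factor $A, A'$ over the real-closed extension $\overline{\mathbb{E}} = \mathbb{E}[\sqrt{-1}]$ via Theorem~\ref{Trealclosed}, apply~\eqref{Etensor}, and invoke the Cauchy--Binet computation~\eqref{Ecauchybinet} to read off the sign pattern, exactly as the paper's proof does by citing Theorem~\ref{Trealclosed} and~\eqref{Etensor}. The extra detail you supply (the explicit index verification of $P_{nn'} = P_n \otimes P_{n'}$, the observation that real-closedness gives square-root closure of the positive cone, and writing out the Cauchy--Binet step rather than deferring to Theorem~\ref{Tlpm}) merely elaborates what the paper leaves implicit.
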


\begin{proof}
The first part is easy. Next, computing the sign patterns on both
sides of the equations
\begin{align}
P_{nn'} \D_{\epsilon \otimes \epsilon'} P_{nn'} = &\ (P_n \otimes P_{n'})
(\D_\epsilon \otimes \D_{\epsilon'}) (P_n \otimes P_{n'}) = P_n
\D_\epsilon P_n \otimes P_{n'} \D_{\epsilon'} P_{n'},\\
(\D_\epsilon \otimes \D_{\epsilon'})(\D_\delta \otimes \D_{\delta'}) = &\
\D_\epsilon \D_\delta \otimes \D_{\epsilon'} \D_{\delta'}\notag
\end{align}
yields~\eqref{Etensorrev}. The next line and the third part follow from
the tensor product and the reversal map being compatible with usual
multiplication, transposes, etc. We now show the final part.
The first assertion about $+PD_{\boldsymbol \sqcup}^\F$ follows
from~\eqref{Elpmtensor}. To show~\eqref{Elpmtensor}, write $A_\epsilon =
L \D_\epsilon L^*$ and $A_{\epsilon'} = L' \D_{\epsilon'} (L')^*$ via
Theorem~\ref{Trealclosed} over $\F \leadsto \mathbb{E}[\sqrt{-1}]$;
now~\eqref{Elpmtensor} follows from~\eqref{Etensor}. Finally, the claims
about $\Phi_+$ follow from Theorems~\ref{Ttower} and~\ref{Tlpm} (or their
real-closed analogues in Theorem~\ref{Trealclosed}).
\end{proof}

\begin{remark}
For completeness, we record that the direct sum and inertia interact via:
\begin{equation}
\In(\epsilon \oplus \epsilon') = \In(\epsilon) + \In(\epsilon') + {\bf
1}_{\epsilon_{n-1} \neq \epsilon_n} \epsilon'_1.
\end{equation}
This is an explicit calculation, with two cases for $\epsilon_{n-1}
\epsilon_n = \pm 1$.  It also suggests a similar ``explicit'' -- but
perhaps cumbersome -- formula for $\In(\epsilon \otimes \epsilon')$,
which we do not pursue here.
\end{remark}

\section{SSRPM matrices}\label{Sssrpm}

Here we return to Section~\ref{Sublpm}, where we listed three possible
approaches to generalize the notion of positive definiteness to other
sign patterns.
The second was via leading principal minors, and led to the Cholesky
factorization-diffeomorphisms for $LPM_n(\epsilon)$ matrices.
The third led to $TPM_n(\epsilon)$ matrices. We now explore the first
option listed there.

\begin{defn}
Given $n \geq 1$ and a sign pattern $\epsilon \in \{
\pm 1 \}^n$, a real symmetric matrix $A_{n \times n}$ is said to be
$SSRPM_n(\epsilon)$ (\textit{Strictly Sign-Regular Principal Minors} with
pattern $\epsilon$) if every principal $k \times k$ minor of $A$ is
nonzero with sign $\epsilon_k$. We say $A_{n \times n}$ is \textit{SSRPM}
if $A$ is $SSRPM_n(\epsilon)$ for some $\epsilon$.
\end{defn}

Clearly, $SSRPM_n(\epsilon) \subseteq LPM_n(\epsilon)$ for all $n$ and
$\epsilon$.
At the same time, $SSRPM_n({\bf 1}_n) = PD_n = LPM_n({\bf 1}_n) =
TPM_n({\bf 1}_n)$ by the classical Sylvester criterion. This immediately
gives the same result for negative definite matrices:
\[
SSRPM_n(\epsilon^-_n) = -PD_n = LPM_n(\epsilon^-_n) \quad \text{for}
\quad \epsilon^-_n := (-1, (-1)^2, \dots, (-1)^n).
\]

Akin to both of these, it is natural to ask if for any other sign pattern
$\epsilon$, the leading principal minors determine the signs of all
principal minors. As we now explain, this does not happen for any other
sign pattern; in doing so, we also identify all diagonal $SSRPM_n$
matrices.

\begin{lemma}\label{Llpm}
Fix an integer $n \geq 1$ and a sign pattern $\epsilon \in \{ \pm 1
\}^n$. Then $SSRPM_n(\epsilon) \subseteq LPM_n(\epsilon)$. Moreover, the
following are equivalent:
\begin{enumerate}
\item[$(1)$] Equality holds: $SSRPM_n(\epsilon) = LPM_n(\epsilon)$.

\item[$(2)$] $SSRPM_n(\epsilon)$ contains a diagonal matrix.

\item[$(3)$] $\epsilon = {\bf 1}_n$ or $\epsilon^-_n$ (defined above).
\end{enumerate}
Via~\eqref{Ediffeos}, the same statement holds for $TPM_n(\epsilon)$.
\end{lemma}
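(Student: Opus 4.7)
The forward inclusion $SSRPM_n(\epsilon) \subseteq LPM_n(\epsilon)$ is immediate, since every leading principal minor is in particular a principal minor. To prove the three-fold equivalence, my plan is to run the cycle $(3) \Rightarrow (1) \Rightarrow (2) \Rightarrow (3)$, exploiting the diagonal witness $\D_\epsilon$ from~\eqref{Elpmdiag} as the pivot between the LPM and SSRPM descriptions.

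For $(3) \Rightarrow (1)$ I would first dispatch $\epsilon = {\bf 1}_n$ by invoking classical Sylvester: all principal minors positive iff all leading principal minors positive iff positive definite, so both cones coincide with $PD_n$. For $\epsilon = \epsilon^-_n$, I would observe that $A \mapsto -A$ multiplies every $k \times k$ (leading or general) principal minor by $(-1)^k$, and hence restricts to bijections $LPM_n(\epsilon^-_n) \leftrightarrow LPM_n({\bf 1}_n)$ and $SSRPM_n(\epsilon^-_n) \leftrightarrow SSRPM_n({\bf 1}_n)$; the ${\bf 1}_n$ case then transfers equality.

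For $(1) \Rightarrow (2)$, the point is just that $\D_\epsilon \in LPM_n(\epsilon)$: its leading $k \times k$ minor is $\prod_{j=1}^k \epsilon_{j-1}\epsilon_j$, which telescopes (with $\epsilon_0 := 1$) to $\epsilon_k$. Under the assumed equality, $\D_\epsilon$ therefore witnesses (2). The substantive step is $(2) \Rightarrow (3)$: suppose some diagonal $D = \operatorname{diag}(d_1,\dots,d_n) \in SSRPM_n(\epsilon)$. Testing singleton index sets $J = \{i\}$ forces every $d_i$ to carry the sign $\epsilon_1$; then for each $J \subseteq [n]$ with $|J| = k$, the principal minor $\prod_{i \in J} d_i$ has sign $\epsilon_1^k$, so $\epsilon_k = \epsilon_1^k$ for all $k$. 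This pins $\epsilon$ down to $\epsilon = {\bf 1}_n$ (if $\epsilon_1 = 1$) or $\epsilon = \epsilon^-_n$ (if $\epsilon_1 = -1$).

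Finally, the TPM version transfers via the linear involution $A \mapsto P_n A P_n$ from~\eqref{Ediffeos}: it permutes principal $k \times k$ minors (sending the one indexed by $J$ to the one indexed by $\{n+1-j : j \in J\}$), swaps leading with trailing principal minors, and preserves diagonality, so it intertwines the LPM/TPM/SSRPM conditions in a sign-preserving way and lets me quote the LPM case verbatim. I do not anticipate a genuine obstacle: the only mildly substantive ingredient is the elementary combinatorial observation in $(2) \Rightarrow (3)$, once one recognizes that diagonal matrices reduce the sign-regularity of principal minors to a constraint on $\epsilon_1^k$.
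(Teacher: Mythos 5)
Your proof is correct and takes essentially the same approach as the paper's: Sylvester's criterion and negation handle $(3)\Rightarrow(1)$, and the diagonal witness $\D_\epsilon$ is what drives the converse direction (the paper phrases it as a direct contrapositive $(1)\Rightarrow(3)$ plus a separate biconditional $(2)\Leftrightarrow(3)$, whereas you cycle through $(1)\Rightarrow(2)\Rightarrow(3)$, but the substance — a diagonal matrix in $SSRPM_n(\epsilon)$ must have all diagonal entries of one sign, forcing $\epsilon_k=\epsilon_1^k$ — is identical).
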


\begin{proof}
The inclusion is obvious. Next, for $\epsilon = {\bf 1}_n$, we have
$LP_n({\bf 1}_n) = PD_n = SSRPM_n({\bf 1}_n)$, by Sylvester's criterion.
By taking negatives, we get the desired equality for $\epsilon_n^- = (-1,
(-1)^2, \dots, (-1)^n)$. Thus $(3) \implies (1)$. Conversely, suppose
$\epsilon$ is any other sign pattern. Then the diagonal matrix with
$(1,1)$ entry $\epsilon_1$ and $(k,k)$ entry $\epsilon_k /
\epsilon_{k-1}$ for $k>1$, belongs to $LPM_n(\epsilon)$ -- in particular,
this set is never empty. But by choice, this diagonal matrix has both $1$
and $-1$ as diagonal entries. Thus it is not in $SSRPM_n$, proving $(1)
\implies (3)$.

Finally, if $D \in SSRPM_n(\epsilon)$ is diagonal then all diagonal
entries have the same sign $+$ or $-$, in which case $\epsilon = {\bf
1}_n$ or $\epsilon^-_n$. Conversely, if $\pm D \in (0,\infty)^n$ then it
is clear that $D \in SSRPM_n(\epsilon)$ for $\epsilon = {\bf 1}_n$ or
$\epsilon^-_n$. Thus $(2) \Longleftrightarrow (3)$. The proof for $TPM_n$
matrices indeed follows via $A \mapsto P_n A P_n$.
\end{proof}

\begin{remark}\label{Rssrpm}
Let $\epsilon \in \{ \pm 1 \}^n \setminus \{ {\bf 1}_n, \epsilon^-_n \}$.
In light of the inclusion $SSRPM_n(\epsilon) \subsetneq LPM_n(\epsilon)$,
we now explain why a variant of Theorem~\ref{Tlpm} does not hold for
SSRPM matrices. Indeed, both directions of the earlier bijection $L
\longleftrightarrow L B_\epsilon L^T$ do not exactly work out. Namely:
let $B_\epsilon \in SSRPM_n(\epsilon)$. Hence $B_\epsilon \in
LPM_n(\epsilon)$, so by Theorem~\ref{Tlpm}, $\{ L B_\epsilon L^T : L \in
{\bf L}_n \}$ equals all of $LPM_n(\epsilon)$ and not merely the proper
subset $SSRPM_n(\epsilon)$. Viewed ``dually'', indeed every $A \in
SSRPM_n(\epsilon)$ admits a Cholesky decomposition, but so do (other)
matrices $A \in LPM_n(\epsilon) \setminus SSRPM_n(\epsilon)$.
\end{remark}

Given Remark~\ref{Rssrpm}, we finish by discussing a basic question:
\textit{do such matrices exist?} More precisely, is the set
$SSRPM_n(\epsilon)$ nonempty for all $n \geq 1$ and all $\epsilon \in \{
\pm 1 \}^n$? As we now show via examples, the answer is indeed ``yes''
for $n \leq 3$; and for general $n$ we provide $2n+4$ distinct $\epsilon$
for which such matrices exist. We are unaware of other $\epsilon$ (e.g.\
for $n \geq 6$) for which such matrices exist.

The first class of $(2n+2)$-many examples of $\epsilon$ are ``nicer''
than is required: not only are all $k \times k$ principal minors for each
fixed $k$ of the same sign, but they have the same value, and even
stronger:
the submatrices are all the same. (This restrictiveness means that we
obtain only $2n+2$ out of the $2^n$ possible sign patterns.)

\begin{example}\label{Exisotropic}
We show that the following $2n+2$ sign patterns are realizable for all
$n$:
\[
\mathcal{E}^{(k)} := ({\bf 1}_k; - {\bf 1}_{n-k}), \quad
\mathcal{E}^{(k)} \circ (-1,(-1)^2,\dots,(-1)^n), \qquad 0 \leq k \leq n.
\]

To realize these, let $a,b \in \R$ and let
\begin{equation}\label{EMabn}
M(a,b,n) := b {\bf 1}_{n \times n} + (a-b) \Id_n = \begin{pmatrix}
a & b & \cdots & b \\
b & a & \cdots & b \\
\vdots & \vdots & \ddots & \vdots \\
b & b & \cdots & a
\end{pmatrix}.
\end{equation}
This is a real symmetric $n \times n$ Toeplitz matrix with diagonal
entries $a$ and off-diagonal entries $b$. As the only nonzero eigenvalue
of the rank-one $b {\bf 1}_{n \times n}$ is its trace $nb$, hence all but
one eigenvalues of $M(a,b,n)$ are $(a-b)$, and the remaining one is $a +
(n-1)b$. As any principal $k \times k$ submatrix of $M(a,b,n)$ is
$M(a,b,k)$, all $k \times k$ principal minors of $M(a,b,n)$ are
$(a+(k-1)b)(a-b)^{k-1}$.

As we want all minors nonzero, we have $a \neq 0$; and either $a>b$ or
$a<b$. We consider three sub-cases for each; in all of them, $M(a,b,n)
\in SSRPM_n(\epsilon)$, and we write down the sign pattern $\epsilon \in
\{ \pm 1 \}^n$. First for the cases when $a>b$:
\begin{enumerate}[(1)]
\item If $a > b \geq 0$, then $\epsilon = {\bf 1}_n$.
(This sign pattern also occurs in the next case.)

\item If $a > 0 > b$, then based on the value of $b/a$, $\epsilon = ({\bf
1}_k; -{\bf 1}_{n-k})$ is possible for any $1 \leq k \leq n$.

\item If $0 > a > b$, then $\epsilon = -{\bf 1}_n$.
\end{enumerate}

Now for the three cases where $a<b$:
\begin{enumerate}[(1)]
\setcounter{enumi}{3}
\item First if $a < b \leq 0$, then $\epsilon = (-1, 1, \dots, (-1)^n)$.
(This sign pattern also occurs in the next case.)

\item If $a < 0 \leq b$, then  we see that the sign pattern $\epsilon =
(-1, 1, \dots, (-1)^k; \ (-1)^k, (-1)^{k+1}, \dots, (-1)^{n-1})$ is
possible for any $1 \leq k \leq n$, depending on the value of $b/a$.

\item Finally, 
if $0 < a < b$, then $\epsilon = (1, -1, \dots, (-1)^{n-1})$.
\end{enumerate}
Notice that the last three cases precisely correspond to the first three,
under $A \mapsto -A$. \qed
\end{example}

\begin{remark}\label{Rexamples}
The real symmetric matrices discussed and classified above include
positive and negative definite matrices, but also:
\textit{symmetric N-matrices} (ones whose all principal minors are
negative) \cite{Inada},
\textit{symmetric almost P-matrices} ($\det(A) < 0$ but all proper
principal minors are positive),
and \textit{symmetric PN-matrices} (ones whose $k \times k$ minors have
sign $(-1)^{k-1}$) \cite{Maybee}.
These matrix classes (including their non-symmetric counterparts) have
been widely studied in a multitude of theoretical and applied fields, see
the remarks after Definition~\ref{Dlpm}.
\end{remark}

\begin{example}
Another cone of structured (real symmetric) matrices is that of the
\textit{(symmetric) almost N-matrices} \cite{Descartes} -- these have
positive determinant but all proper principal minors negative. We
construct a 3-parameter family of such matrices for each $n \geq 3$; for
$n=2$ it is easy to construct examples.
Thus, let $n \geq 3$, and choose scalars $a,b,c$ satisfying:
\begin{equation}\label{Econstraints}
0 > a > b, \qquad \frac{(n-2)b^2}{a+(n-3)b} < c <
\frac{(n-1)b^2}{a+(n-2)b} < 0.
\end{equation}
One checks that the final inequalities for $c$ are consistent; and
moreover,
\[
(n-1)b^2 < ac  +(n-2)bc < (n-1)bc \quad \implies \quad c<b.
\]

Now we present the matrices in question. Let $M(a,b,c,n) \in \R^{n \times
n}$ have all off-diagonal entries $b$, the $(n,n)$ entry $c$, and all
other diagonal entries $a$, i.e.,
\[
M(a,b,c,n) = M(a,b,n) + (c-a) E_{nn},
\]
where $M(a,b,n)$ is as in~\eqref{EMabn}.

We claim that $M(a,b,c,n)$ is a symmetric
almost N-matrix, i.e.\ in $SSRPM_n((-1,\dots,-1,1))$ if $a,b,c$
satisfy~\eqref{Econstraints}.
To show the claim, first note that all diagonal entries are negative from
above. Next, any principal $k \times k$ minor of the leading principal
$(n-1) \times (n-1)$ submatrix $M(a,b,n-1)$ equals $(a-b)^{k-1} (a +
(k-1)b) < 0$ (see Example~\ref{Exisotropic}).

All other principal minors contain the $(n,n)$ entry $c$, and hence are
of the form $M(a,b,c,k)$. Thus, it suffices to show that
\[
\det M(a,b,c,k) < 0 < \det M(a,b,c,n), \quad \forall 2 \leq k \leq n-1.
\]

We now compute $\det M(a,b,c,k)$ by expanding along the first column and
linearity in it. Since the first column equals $(a,b,\dots,b)^T + (c-a)
{\bf e}_1$, we get:
\begin{align*}
\det M(a,b,c,k) = &\ \det M(a,b,k) + (c-a) \det M(a,b,k-1)\\
= &\ (a-b)^{k-2} \left[ (a-b)(a+(k-1)b) + (c-a)(a+(k-2)b) \right]\\
= &\ (a-b)^{k-2} \left[ ca-b^2 + (k-2)b(c-b) \right].
\end{align*}

To determine the signs of these determinants, since $a>b$ we may consider
$f(2), \dots, f(n-1), f(n)$, where $f(k) := ca-b^2 + (k-2)b(c-b)$. But
$f(k)$ is an arithmetic progression with common step-size $b(c-b)$, which
is positive by~\eqref{Econstraints} and the following computation.
Thus, it suffices to verify that $f(n-1) < 0 < f(n)$, i.e.,
\[
ca-b^2 + (n-3)b(c-b) < 0 < ca-b^2 + (n-2)b(c-b).
\]
Rearranging and solving for $c$, this is precisely equivalent to the
bounds on $c$ in~\eqref{Econstraints}. \qed
\end{example}

Our final, $(2n+4)$th example is the ``negative'' of the preceding matrix
cone:

\begin{example}
Let $A$ be any symmetric almost $N$-matrix. Then
\[
-A \in SSRPM_n(\epsilon), \quad \text{where} \quad
\epsilon = (1, (-1)^1, \dots, (-1)^{n-2} \, ; \, (-1)^n).
\]
\end{example}

We end with a natural question.

\begin{question}
Is the set $SSRPM_n(\epsilon)$ nonempty for all $n$ and sign patterns
$\epsilon \in \{ \pm 1 \}^n$?
(It is mentioned in~\cite{Signs} that explicit matrices can be found for
all $n \leq 5$ and all $\epsilon$.)
\end{question}

\end{document}